\documentclass[12pt]{amsart}
\oddsidemargin=0in \evensidemargin=0in 
\textwidth=6.6in \textheight=8.7in

\usepackage{amsfonts,amssymb, amscd, latexsym, graphicx, psfrag, color,float}
\usepackage{bbm}

\usepackage[all]{xy}

\usepackage{booktabs}
\usepackage{multirow}
\usepackage{mathrsfs}
\usepackage{verbatim}


\usepackage{microtype} 
\usepackage[margin=1in,marginparwidth=0.8in, marginparsep=0.1in]{geometry}

\usepackage[bookmarks=true, bookmarksopen=true,%
bookmarksdepth=3,bookmarksopenlevel=2,%
colorlinks=true,%
linkcolor=blue,%
citecolor=blue,%
filecolor=blue,%
menucolor=blue,%
urlcolor=blue]{hyperref}

\newtheorem{dummy}{dummy}[section]
\newtheorem{lemma}[dummy]{Lemma}
\newtheorem{theorem}[dummy]{Theorem}

\newtheorem{corollary}[dummy]{Corollary}
\newtheorem{proposition}[dummy]{Proposition}
\theoremstyle{definition}
\newtheorem{notation}[dummy]{Notation}
\newtheorem{definition}[dummy]{Definition}
\newtheorem{example}[dummy]{Example}
\newtheorem{remark}[dummy]{Remark}

\newtheorem*{prop*}{Proposition}


\newcommand{\bA}{\mathbb{A}}
\newcommand{\bC}{\mathbb{C}}

\newcommand{\bN}{\mathbb{N}}
\newcommand{\bP}{\mathbb{P}}

\newcommand{\bR}{\mathbb{R}}
\newcommand{\bZ}{\mathbb{Z}}

\newcommand{\cA}{\mathcal{A}}

\newcommand{\cE}{\mathcal{E}}
\newcommand{\cF}{\mathcal{F}}
\newcommand{\cG}{\mathcal{G}}

\newcommand{\cL}{\mathcal{L}}

\newcommand{\cO}{\mathcal{O}}

\newcommand{\cT}{\mathcal{T}}

\newcommand{\cX}{\mathcal {X}}

\newcommand{\op}{\operatorname}


\newcommand{\M}{M}


\newcommand{\La}{\Lambda}
\newcommand{\Ga}{\Gamma}
\newcommand{\ga}{\gamma}

\DeclareMathOperator{\sheafHom}{\ensuremath{\mathscr{H}\hspace{-.25ex}\mathit{om}}}

\newcommand{\Fuk}{\mathrm{Fuk}}

\newcommand{\Hom}{\mathrm{Hom}}

\renewcommand{\log}{{\op{log}}}

\newcommand{\Sh}{\mathit{Sh}}
\newcommand{\Loc}{\mathit{Loc}}

\newcommand{\coeffs}{{\mathbbm k}}

 \numberwithin{equation}{subsection}

 \newcommand{\CCC}{{\rm CCC}}

\newcommand{\Perf}{\mathrm{Perf}}
\newcommand{\Perfprop}{\mathrm{Perf}_{\mathrm{prop}}}
\newcommand{\Coh}{\mathrm{Coh}}
\newcommand{\QCoh}{\mathrm{QCoh}}
\newcommand{\IndCoh}{\mathrm{IndCoh}}

\newcommand{\congto}{\xrightarrow{\sim}}
\newcommand\onto{\twoheadrightarrow}
\newcommand\into{\hookrightarrow}

\newcommand{\stackfan}{\mathbf{\Sigma}}
\newcommand{\stackfano}{\stackfan^\circ}
\newcommand{\Sigmao}{\Sigma^\circ}
\newcommand{\whSigmao}{\widehat{\Sigma}^\circ}
\newcommand{\Legtimes}{\dot{\times}}
 
\newcommand{\newword}[1]{\textbf{\emph{#1}}}

\newcommand{\arrtip}{latex'}

\newcommand*{\hrlen}{10}
\newcommand*{\hramp}{3}
\newcommand*{\fcolor}{black!5}
\newcommand*{\bvertrad}{.09}
\newcommand*{\wvertrad}{.06}
\newcommand*{\ifac}{.26}

\usepackage{pgfplots}
\usepgfplotslibrary{fillbetween}
\usepackage{tikz}
\usetikzlibrary{matrix,shapes,arrows,arrows.meta,calc,topaths,intersections,hobby,positioning,decorations.pathreplacing,decorations.pathmorphing,fit,patterns}
\tikzset{
asdstyle/.style={blue,thick},
righthairs/.style={postaction={decorate,draw,decoration={border,amplitude=\hramp,segment length=\hrlen,angle=-90,pre=moveto,pre length=\hrlen/2}}},
lefthairs/.style={postaction={decorate,draw,decoration={border,amplitude=\hramp,segment length=\hrlen,angle=90,pre=moveto,pre length=\hrlen/2}}},
righthairsnogap/.style={postaction={decorate,draw,decoration={border,amplitude=\hramp,segment length=\hrlen,angle=-90}}},
lefthairsnogap/.style={postaction={decorate,draw,decoration={border,amplitude=\hramp,segment length=\hrlen,angle=90}}},
graphstyle/.style={thick},
arrowstyle/.style={thick,decorate,decoration={snake,amplitude=1.7,segment length=10pt,post length=.5mm,pre length=0}},
genmapstyle/.style={thick,-stealth'},
arrhdstyle/.style={thick},
exceptarcstyle/.style={red, ultra thick},
dualquiverstyle/.style={thick,->},
|/.tip={Bar[width=.8ex]}
}
 
\setcounter{tocdepth}{1}

\title{Kasteleyn Operators from Mirror Symmetry}
\author{David Treumann, Harold Williams and Eric Zaslow}

\begin{document}
	
\begin{abstract}
Given a consistent bipartite graph $\Gamma$ in $T^2$ with a complex-valued edge weighting $\cE$ we show the following two constructions are the same. The first is to form the Kasteleyn operator of $(\Gamma, \cE)$ and pass to its spectral transform, a coherent sheaf supported on a spectral curve in $(\bC^\times)^2$. The second is to form the conjugate Lagrangian $L \subset T^* T^2$ of $\Gamma$, equip it with a brane structure prescribed by $\cE$, and pass to its mirror coherent sheaf. This lives on a stacky toric compactification of $(\bC^\times)^2$ determined by the Legendrian link which lifts the zig-zag paths of $\Gamma$ (and to which the noncompact Lagrangian $L$ is asymptotic). We work in the setting of the coherent-constructible correspondence, a sheaf-theoretic model of toric mirror symmetry. We also show that tensoring with line bundles on the compactification is mirror to certain Legendrian autoisotopies of the asymptotic boundary of $L$. 
\end{abstract}

\maketitle

\tableofcontents

\section{Introduction}
\thispagestyle{empty}

In pioneering work, Kenyon-Okounkov-Sheffield \cite{KOS} 
showed that the statistical properties of dimer configurations on a doubly periodic bipartite graph in $\bR^2$ are largely determined by an algebraic curve: the spectral curve of its Kasteleyn operator $K(x,y)$. 
This is a matrix-valued Laurent polynomial that depends on a choice of edge weights on the associated finite graph $\Gamma$ in $T^2 = \bR^2/\bZ^2$. The vanishing locus of its determinant defines a curve $C \subset (\bC^\times)^2$, and its cokernel defines a sheaf supported on $C$. As this spectral data only depends on edge weights up to gauge transformations, its construction can be understood as a map from local systems on $\Gamma$ to coherent sheaves on $(\bC^\times)^2$. The purpose of this paper is to identify this spectral transform as an instance of homological mirror symmetry.

The algebro-geometric side of the mirror relation is a toric compactification $\cX_{\stackfan}$ of $T_N := (\bC^\times)^2$. Here $\stackfan$ denotes a complete (possibly stacky) fan in $N_\bR := \bR^2$ determined by the zig-zag paths of $\Gamma$, a configuration of immersed curves canonically associated to any bipartite surface graph. The symplectic counterpart of this compactification is a singular Legendrian $\Lambda_{\stackfan} \subset T^\infty T^2$ in the contact boundary of $T^* T^2$, described in \cite{FLTZ,FLTZ3}. Lagrangian branes asymptotic to $\Lambda_{\stackfan}$ are faithfully modeled by constructible sheaves on $T^2$ with microsupport asymptotic to $\Lambda_{\stackfan}$; see \cite{Nad09,NZ} or \cite{Gui12,JT17} for complementary approaches through Floer theory and pure sheaf theory, respectively. 
In this form the mirror equivalence $$\Perf(\cX_{\stackfan}) \cong \Sh^c_{\Lambda_{\stackfan}}(T^2)$$ between the dg categories of perfect complexes and constructible sheaves is referred to as the coherent-constructible correspondence (or $CCC$) \cite{FLTZ}, proved in the stated generality in \cite{Kuw16}.

Local systems on the graph $\Gamma$ give rise to Lagrangian branes in $T^* T^2$ via the construction of \cite{STWZ}: up to Hamiltonian isotopy there is a canonical embedded exact Lagrangian $L_\Gamma$ in $T^*T^2$ which deformation retracts onto $\Gamma$. In particular, there is an isomorphism $\Loc_1(\Gamma) \cong \Loc_1(L_\Gamma)$ between their algebraic tori of rank one local systems. The Lagrangian $L_\Gamma$ is noncompact but asymptotic to the Legendrian lift $\Lambda_\Gamma \subset T^\infty T^2$ of the zig-zag paths. It follows from the formalism of sheaf quantization that there is an associated embedding $\Loc_1(L_\Gamma) \into \Sh^c_{\Lambda_\Gamma}(T^2)$. The resulting objects were termed \emph{alternating sheaves} in \cite{STWZ}.

Provided $\Gamma$ satisfies a certain consistency condition, $\Lambda_\Gamma$ is Legendrian isotopic to the Legendrian link $\Lambda_{\stackfano}$ associated to the rays of $\stackfan$. Following the results of \cite{GKS}, such an isotopy quantizes to an equivalence $\Sh^c_{\Lambda_\Gamma}(T^2) \congto \Sh^c_{\Lambda_{\stackfano}}(T^2)$. On the other hand, $\Lambda_{\stackfano}$ is a subset of the singular Legendrian $\Lambda_{\stackfan}$, hence there is a fully faithful inclusion $\Sh^c_{\Lambda_{\stackfano}}(T^2) \into \Sh^c_{\Lambda_{\stackfan}}(T^2)$. The composition of these assembled maps with the coherent-constructible correspondence now defines a map from local systems on $\Gamma$ to coherent sheaves on $\cX_{\stackfan}$. Our main result is that this provides a geometric interpretation of the Kasteleyn operator $K(x,y)$:

\vskip0.15in
\begin{center}
\fbox{
\emph{the mirror map from local systems to coherent sheaves
is the spectral transform.}}
\end{center}
\vskip0.15in

\begin{theorem}[c.f. Theorem \ref{thm:mainresult}]\label{thm:mainthmintro}
Let $\Gamma \subset T^2$ be a consistent bipartite graph, $\stackfan$ the associated complete stacky fan. Then the following diagram commutes.
\[
\begin{tikzpicture}
[thick,>=\arrtip,every text node part/.style={align=center}]
\newcommand*{\xa}{2.8}; \newcommand*{\xb}{2.8}; \newcommand*{\xc}{2.5}; \newcommand*{\xd}{2.5}
\newcommand*{\ya}{0}; \newcommand*{\yb}{1.8};
\node[matrix] (left) at (0,0) {
	\node (a) at (0,0) {$\Sh^c_{\Lambda_\Gamma}(T^2)$};
	\node (b) at (\xa,0) {$\Sh^c_{\Lambda_{\stackfan}}(T^2)$};
	\node (c) at (\xa+\xb,0) {$\Perf(\cX_{\stackfan})$};
	\node (d) at (\xa+\xb+\xc,0) {$\Perf(T_N)$};
	\node (e) at (-\xd,-\ya-\yb) {$\Loc_1(\Gamma)$};
	\node (f) at (\xa+\xb+\xc,-\ya-\yb) {$\begin{Bmatrix}\text{pure sheaves of} \\ \text{dimension one}\end{Bmatrix}$};
	\node (g) at (-\xd,0) {$\Loc_1(L_\Gamma)$};
	\draw[->] (a) to (b); \draw[->] (b) to node[above] {$\sim$}  (c); \draw[->] (c) to (d);
	\draw[right hook->] (g) to  (a); \draw[->] (e) to node[above, rotate=90] {$\sim$} (g); \draw[->] (e) to node[below] {spectral transform of $K(x,y)$} (f); \draw[right hook->] (f) to (d);\\};
\end{tikzpicture}
\]
Here the bottom and top left maps are defined by a fixed Kasteleyn orientation. 
The top row is the composition of i) quantization of local systems on $L_\Gamma$ as alternating sheaves, ii) the GKS equivalence associated to a Legendrian isotopy $\Lambda_\Gamma \to \Lambda_{\stackfano}$ and the inclusion $\Sh^c_{\Lambda_{\stackfano}}(T^2) \subset \Sh^c_{\Lambda_{\stackfan}}(T^2)$, iii) the $CCC$, and iv) restriction to $T_N \subset \cX_{\stackfan}$.
\end{theorem}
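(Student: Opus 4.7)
The plan is to realize the alternating sheaf of a local system $\mathcal{E}$ on $\Gamma$ as the cone of an explicit two-term complex in $\Sh^c_{\Lambda_\Gamma}(T^2)$ indexed by the black and white vertices of $\Gamma$, and then identify the image of this complex under the top row of the diagram with the Kasteleyn complex $\cO_{T_N}^{|B|} \xrightarrow{K(x,y)} \cO_{T_N}^{|W|}$, so that its cokernel recovers the spectral sheaf.

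For the first step, I would produce a two-term resolution $\bigoplus_{b\in B} \mathcal{S}_b \longrightarrow \bigoplus_{w\in W} \mathcal{S}_w$ of the alternating sheaf, where each summand $\mathcal{S}_v$ is an elementary constructible sheaf microsupported in the star of $v$ inside $\Lambda_\Gamma$. Writing the alternating sheaf as built cell-by-cell from its face stalks and edge restriction maps (in the style of \cite{STWZ}), the edges of $\Gamma$ give exactly the differential, with entries $\pm \mathcal{E}(e)$ whose signs come from the chosen Kasteleyn orientation.

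For the second step, I would use the combinatorial form of the CCC to identify, summand by summand, the image of $\mathcal{S}_v$ under GKS followed by CCC as a line bundle $\mathcal{L}_v$ on $\cX_{\stackfan}$, whose toric character is read off from the position of a chosen lift of $v$ in the universal cover $\bR^2 \to T^2$. After restriction to $T_N$ each $\mathcal{L}_v$ becomes trivial, and the inherited differential is a $|W|\times|B|$ Laurent-polynomial matrix whose $(w,b)$-entry is a signed sum of monomials $\mathcal{E}(e)\, x^{m_e} y^{n_e}$ indexed by edges $e$ between $b$ and $w$, with exponents $(m_e,n_e)$ recording the translation from the chosen lift of $b$ to the endpoint of the corresponding lift of $e$. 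This is precisely the standard recipe for the Kasteleyn matrix $K(x,y)$, and taking cokernels yields the spectral sheaf.

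The main obstacle lies in the second step, specifically in tracking monomials and signs through the GKS equivalence associated to a choice of Legendrian isotopy $\Lambda_\Gamma \to \Lambda_{\stackfano}$. Different choices of isotopy will produce differentials differing by conjugation by diagonal monomial matrices, and one must show that this ambiguity corresponds exactly to the gauge freedom in the local system $\mathcal{E}$ and to the choice of Kasteleyn orientation, so that the induced map $\Loc_1(\Gamma) \to \Perf(T_N)$ is well-defined and agrees with the spectral transform. It may be useful to first verify the claim for a model case such as the honeycomb graph, where both the alternating sheaf resolution and the mirror line bundles are completely explicit, and then propagate the result by exploiting consistency of $\Gamma$ and the invariance of both sides of the diagram under elementary moves on bipartite graphs.
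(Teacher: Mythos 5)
Your overall architecture (two-term presentation of the alternating sheaf indexed by black and white vertices, with differential entries $\pm\cE(e)$ fixed by the Kasteleyn orientation, whose image under the top row should be $\coeffs[T_N]^{\Gamma_0^b}\xrightarrow{K}\coeffs[T_N]^{\Gamma_0^w}$) matches the paper's starting point, but your second step contains a genuine gap. The summands of any such presentation --- in the paper's normalization, the (shifted) constant sheaves on the black and white regions, eq.\ (\ref{eq:altpres}) --- are \emph{not} objects of $\Sh^c_{\Lambda_\Gamma}(T^2)$: at each crossing their microsupport contains an interval of codirections lying outside $\Lambda_\Gamma$ (see the discussion in Example \ref{ex:mainex}), contrary to your claim that each $\cS_v$ is microsupported in the star of $v$ inside $\Lambda_\Gamma$. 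Consequently the GKS equivalence attached to an isotopy $\Lambda_\Gamma\to\Lambda_{\stackfano}$ does not act on these summands in a controlled way: a GKS kernel moves microsupport by the ambient contact isotopy, so the images of the summands depend on the chosen extension of the isotopy beyond $\Lambda_\Gamma$ and need not land in $\Sh_{\Lambda_{\stackfan}}(T^2)$ at all, hence need not be mirror to line bundles on $\cX_{\stackfan}$. Making your ``summand by summand'' identification work would require extending the isotopy to one of the singular Legendrian $\Lambda'_\Gamma=\Lambda_\Gamma\cup SS^\infty(i_*\coeffs_B)\cup SS^\infty(i_!\coeffs_W[2])$ onto $\Lambda_{\stackfan}$ satisfying the hypotheses of \cite{Zho18}; the paper verifies this by hand for the hexagonal-lattice/$\bP^2$ example but explicitly states (Remark following Example \ref{ex:mainex}) that it is not known to be possible for arbitrary consistent $\Gamma$. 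So the step you describe as bookkeeping of monomials and signs is in fact the step whose existence is in doubt, and your fallback of checking a model case and propagating by square moves does not repair it, since square moves do not reduce a general consistent graph to the honeycomb graph.

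The paper's proof of Theorem \ref{thm:mainresult} avoids this entirely, and the missing idea is worth internalizing: only the restriction to $T_N$ of the mirror coherent sheaf is needed, and by Lemma \ref{lem:openorbitmain} this restriction is computed on the constructible side by convolution with the free local system $p_!\omega_{M_\bR}$ (equivalently by $\Gamma_c(p^*(-))$ with its deck action). The key claim (\ref{eq:invariance}) is that $p_!\omega_{M_\bR}\star\cA\cong p_!\omega_{M_\bR}\star K_{\{\Lambda_t\}}(\cA)$, proved by a singular-support estimate (\cite[Prop.~5.4.4]{KS94}) showing the convolution of $p_!\omega_{M_\bR}$ with the GKS family over $I$ is locally constant in $t$. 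This lets one compute everything directly from $\cA$ on the $\Lambda_\Gamma$ side, before any isotopy: pulling the presentation (\ref{eq:altpres}) back to $M_\bR$ and taking $\Gamma_c$ gives the free modules $\coeffs[T_N]^{\Gamma_0^b}$, $\coeffs[T_N]^{\Gamma_0^w}$ with the connecting map equal to $K_\cL$, whose cokernel is the spectral transform. In particular the independence of the choice of isotopy, which you propose to establish by matching monomial conjugation against gauge freedom, falls out of the invariance statement for free (and is genuinely only an invariance of the restriction to $T_N$: different isotopies do change the extension over $\cX_{\stackfan}$, by tensoring with line bundles, as in Proposition \ref{prop:mirrorisotopies}).
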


The Theorem can also be summarized more coarsely, without referencing the compactification $\cX_{\stackfan}$, as follows: $L_\Gamma$ supports objects of the wrapped Fukaya category of $T^*T^2$, and mirror objects in $\Perf(T_N)$ are supported on the spectral curve. Indeed the notion of wrapping appears implicitly in the proof of Theorem \ref{thm:mainthmintro} in the guise of convolution with a free local system on~$T^2$, see Lemma \ref{lem:openorbitmain}.

If $\Gamma^b_0, \Gamma^w_0 \subset \Gamma_0$ denote the sets of black and white vertices of $\Gamma$, let us recall in more detail that the Kasteleyn operator 
$$ K(x,y): \bC[T_N]^{\Gamma^b_0} \to \bC[T_N]^{\Gamma^w_0} $$
is a matrix-valued function on $T_N$ whose entries are sign-twisted weighted edge counts. The edge weights depend on a choice of local system on $\Gamma$, and the signs are prescribed by a Kasteleyn orientation. This is an assignment $\Gamma_1 \to \{\pm 1\}$ satisfying certain conditions, and such an assignment can be identified with the choice of spin structure on $L_\Gamma$ used to fix certain signs in the map $\Loc_1(L_\Gamma) \into \Sh^c_{\Lambda_\Gamma}(T^2)$. The spectral transform of $K(x,y)$ is its cokernel as a map of $\bC[T_N]$-modules, and for generic edge weights is the pushforward of a line bundle on a smooth curve. 

\subsection*{Isotopies and integrability}
A main result of \cite{GK} is that the forgetful map from spectral data to the underlying spectral curve defines an algebraic completely integrable system with respect to the canonical Poisson structure on $\Loc_1(L_\Gamma)$. The coefficients of the defining equation of the curve give a collection of regular functions on $\Loc_1(L_\Gamma)$, well-defined up to an overall scalar. These provide Hamiltonians for the integrable system, and suitably normalized can be identified with summands of the partition function for the dimer model on $\Gamma$ (that is, they are weighted counts of perfect matchings, organized by their associated class in $H_1(T^2)$). 

Different incarnations of this integrable system have been studied from a wide range of points of view --- see for example \cite{B,CW,DM,FM,EFS,GSTV,FHM}. A generic fiber can be identified with a finite cover of the Jacobian of the closure $\overline{C}$ of $C$ in (the coarse moduli space of) $\cX_{\stackfan}$.
In \cite{KO,GK} such an identification is determined as follows: a choice of white vertex defines a section of $\mathrm{cok}\, K(x,y)$, and one pushes forward its vanishing divisor from $C$ to $\overline{C}$. 

In our framework, the identification of generic Liouville fibers with covers of Jacobians is determined by the choice of isotopy $\Lambda_\Gamma \to \Lambda_{\stackfano}$. That is, given a local system on $L_\Gamma$ each choice of isotopy determines an extension of $\mathrm{cok}\, K(x,y)$ to a sheaf on the corresponding closed curve $\overline{C}$. These sheaves are not in general isomorphic though their restrictions to $T_N$ are. 

To understand this ambiguity we consider the following elementary autoisotopies of $\Lambda_{\stackfano}$: each ray $\rho$ of $\stackfan$ determines a collection of pairwise isotopic components with parallel front projections in $T^2$, and we let $\sigma_\rho$ be the autoisotopy which moves these in their normal direction until they become cyclically permuted (see Figure \ref{fig:lambdarhot}). These act by autoequivalences on $\Sh^c_{\Lambda_{\stackfano}}(T^2)$ following \cite{GKS}. On the other hand, also associated to $\rho$ is a line bundle $\cL_\rho$ on $\cX_{\stackfan}$ --- when $\cX_{\stackfan}$ is a variety this is just the line bundle $\cO(- D_\rho)$ defined by the toric divisor $D_\rho$, and is a root of $\cO(- D_\rho)$ when $D_\rho$ has nontrivial stabilizers. 
		
\begin{proposition}[c.f. Proposition \ref{prop:mirrorisotopies}]\label{prop:isotopyintro}
	The autoisotopy $\sigma_\rho$ acts on $\Sh^c_{\Lambda_{\stackfano}}(T^2)$ by the restriction of the autoequivalence of $\Sh^c_{\Lambda_{\stackfan}}(T^2) \cong \Perf(\cX_{\stackfan})$ given by tensoring with $\cL_\rho$.
\end{proposition}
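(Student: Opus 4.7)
The plan is to realize both autoequivalences as convolution operators on $\Sh^c_{\Lambda_{\stackfan}}(T^2)$ and match the resulting kernels. On the symplectic side, the GKS formalism presents the sheaf quantization of the Legendrian isotopy $\sigma_\rho$ as convolution with a constructible kernel $\cK_{\sigma_\rho}$ on $T^2 \times T^2$ supported near the Lagrangian suspension of $\sigma_\rho$. On the coherent side, $\otimes \cL_\rho$ is convolution with $\cL_\rho \in \Perf(\cX_{\stackfan})$, and since the $\CCC$ intertwines tensor product with convolution, this transports to convolution with $\kappa(\cL_\rho)$ in $\Sh^c_{\Lambda_{\stackfan}}(T^2)$, where $\kappa$ denotes the CCC functor. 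Thus the proposition reduces to identifying $\cK_{\sigma_\rho}$ with $\kappa(\cL_\rho)$ as convolution operators, at least after restriction to the subcategory $\Sh^c_{\Lambda_{\stackfano}}(T^2)$.

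I would then give both kernels an explicit form. Let $v_\rho \in N$ denote the primitive generator of $\rho$ and $n_\rho$ the associated stacky multiplicity. Lifted to the universal cover $T^*\bR^2$, the $\rho$-components of $\Lambda_{\stackfano}$ are parallel, equally-spaced Legendrian arcs over hyperplanes transverse to $v_\rho$, and a natural representative of $\sigma_\rho$ is the time-one map of a compactly supported Hamiltonian that acts, near these components, as translation by $v_\rho / n_\rho$. By the translation case of GKS, $\cK_{\sigma_\rho}$ is then, up to microlocal shift, the constant sheaf on the graph of translation by $v_\rho / n_\rho$ on $T^2$. On the other side, using the explicit form of CCC from \cite{FLTZ, Kuw16}: when $n_\rho = 1$, $\kappa(\cO(-D_\rho))$ is obtained from $\kappa(\cO_{\cX_{\stackfan}})$ by translation by $v_\rho$ on $T^2 = N_\bR / N$; in the stacky case $\cL_\rho = \cO(-D_\rho)^{1/n_\rho}$ corresponds, after passing through the degree-$n_\rho$ cover of tori, to translation by $v_\rho / n_\rho$. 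The two translation vectors match, and the remaining task is to verify that no discrepancy of microlocal or sign data is introduced; this can be done by evaluating both functors on the single generator $\kappa(\cO_{\cX_{\stackfan}})$.

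The main obstacle will be keeping consistent track of the stacky data and the microlocal signs introduced by the choice of Kasteleyn orientation / spin structure. One must choose the Hamiltonian representative of $\sigma_\rho$ carefully so that its flow does not introduce spurious monodromy around Legendrian components associated to rays other than $\rho$, and verify that the fractional root structure $\cL_\rho = \cO(-D_\rho)^{1/n_\rho}$ is correctly encoded by a fractional translation on the constructible side. A useful consistency check is that the $n_\rho$-fold iterate $(\sigma_\rho)^{n_\rho}$ should act as $\otimes \, \cO(-D_\rho)$: on the sheaf side this is a genuine lattice translation by $v_\rho$, and on the coherent side it is the standard non-stacky toric line bundle computation, so the $n_\rho$-power equality is automatic from the non-stacky case. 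Extracting the single action $\sigma_\rho$ from its $n_\rho$-th power, together with matching all signs, is the technical heart of the argument.
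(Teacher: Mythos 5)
Your first step (use monoidality of the $\CCC$ to turn $\otimes\,\cL_\rho$ into convolution with a constructible kernel, then try to identify that kernel with the GKS kernel of $\sigma_\rho$) is exactly how the paper's proof of Proposition \ref{prop:mirrorisotopies} begins. The gap is in your identification of the GKS kernel. You assert that $\cK_{\sigma_\rho}$ is (up to shift) the constant sheaf on the graph of a translation of $T^2$. This cannot be right. First, $\sigma_\rho$ moves only the components of $\Lambda_{\stackfano}$ labeled by $\rho$ and leaves the others fixed; convolution with a skyscraper/translation kernel translates the microsupport of \emph{every} object, so it does not even preserve $\Sh^c_{\Lambda_{\stackfano}}(T^2)$ as soon as $\Sigma$ has more than one ray, and a compactly supported Hamiltonian that fixes the other components has a time-one map that is not a translation, so the ``translation case of GKS'' no longer applies. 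Second, the translation vector you propose lives in the wrong lattice: $T^2=M_\bR/M$, while $v_\rho\in N$; the correct normal displacement is $\chi_\rho\in M_\bR$ with $|C_\rho|\chi_\rho\in M$. In the non-stacky case this is a lattice vector, so ``translation by it'' is the identity on $T^2$, whereas $\otimes\,\cO(-D_\rho)$ is certainly not the identity functor --- which also breaks your consistency check that $(\sigma_\rho)^{|C_\rho|}$ is ``automatic'': the monodromy of this loop of Legendrians is a nontrivial autoequivalence precisely because the GKS functor is not pullback along the (trivial) endpoint diffeomorphism. Relatedly, $\CCC_{X_\Sigma}(\cO(-D_\rho))$ is not a translate of $\CCC_{X_\Sigma}(\cO)$; line bundles go to (twisted) polytope sheaves, and changing the divisor moves a facet of the polytope rather than translating the sheaf. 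Finally, testing on the single object $\CCC(\cO_{\cX_{\stackfan}})$ is not available: that object is microsupported on $\Lambda_{\stackfan}$, not on $\Lambda_{\stackfano}$, so the quantized isotopy does not act on it canonically, and in any case one object does not pin down the kernel.

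What the paper does instead is identify the kernel as the twisted polytope sheaf $\cT_\rho=\CCC_{\cX_{\stackfan}}(\cL_\rho)$, whose support is the triangle $p(T_\rho)\subset T^2$ (not a point). Concretely, one builds a family $\cT_{\rho,I}$ over $T^2\times I$ by scaling the triangle linearly in $t$, checks that $i^*_{T^2_0}\cT_{\rho,I}\cong\coeffs_{\{0\}}$ (the convolution unit) and $i^*_{T^2_1}\cT_{\rho,I}\cong\cT_\rho$, and proves via the microsupport estimate for proper pushforward \cite[Prop.~5.4.4]{KS94} that $\cT_{\rho,I}\star\cF$ is microsupported in $\Lambda^\rho_I$ for every $\cF\in\Sh^c_{\Lambda_{\stackfano}}(T^2)$. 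By Corollary \ref{cor:LegendrianGKS} this exhibits $\cT_{\rho,I}\star-$ as the inverse of restriction at $t=0$, and base change at $t=1$ gives $K_{\{\Lambda^\rho_t\}}\cong\cT_\rho\star-$; monoidality of the $\CCC$ (together with the push--pull comparison of $\CCC_{\cX_{\stackfano}}(i^*\cL_\rho)\star-$ with $\cT_\rho\star-$ on properly supported objects, a point your sketch also glosses over) then finishes the proof. If you want to salvage your approach, the essential missing idea is this explicit interpolating kernel and its singular-support bound; without it the identification of the quantized isotopy has no content.
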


The finite covers appearing in the above discussion and in \cite[Sec. 1.4.3]{GK} reflect the discrepancy between the stack $\cX_{\stackfan}$ and its coarse moduli space: sheaves on the former record extra information about the action of finite stabilizers, while the Jacobian of a non-stacky spectral curve does not record this data. We discuss this issue in more detail in Section \ref{sec:satellites}.

The Poisson commutativity of the Goncharov-Kenyon Hamiltonians in the canonical Poisson structure on $\Loc_1(L_\Gamma)$ should admit an alternative derivation from the present point of view. On one hand, it is known that the relative Picard variety of a family of smooth curves in a toric surface has a natural Poisson structure in which the forgetful map to the Hilbert scheme is a Lagrangian fibration (this is a special case of \cite[Section 8]{DM}). On the other hand, it is understood by work of \cite{BD}, pursued in the present context in \cite{ST}, that the Poisson structure on this space is essentially intrinsic to the underlying category whose moduli we are considering. Thus in our example the natural Poisson structures from the coherent and constructible descriptions of the category should coincide, and a Lagrangian fibration for one is a Lagrangian fibration for the other. 

\subsection*{Cluster structures} At any square face of $\Gamma$ we may perform a local move to produce a new bipartite graph $\Gamma'$, see Figure \ref{fig:squaremove}. The dual graph of $\Gamma$ is naturally a quiver (we orient it so that its edges pass a white vertex on their right) and when $\Gamma$ undergoes a square move its dual graph undergoes a quiver mutation. Note that holonomies around the faces of $\Gamma$ provide distinguished coordinates on $\Loc_1(\Gamma)$ (satisfying the single relation that their product is equal to $1$). A key result of \cite{STWZ} is that alternating sheaves before and after a square move are related by a commutative diagram 
\[
\begin{tikzpicture}
[thick,>=\arrtip,every text node part/.style={align=center}]
\newcommand*{\xa}{2.8}; \newcommand*{\xb}{2.8}; \newcommand*{\xc}{2.5}; \newcommand*{\xd}{2.5}
\newcommand*{\ya}{1.5}; \newcommand*{\yb}{1.8};
\node[matrix] (left) at (0,0) {
	\node (a) at (0,0) {$\Loc_1(L_\Gamma)$};
	\node (b) at (0,-\ya) {$\Loc_1(L_{\Gamma'})$};
	\node (c) at (\xa,0) {$\Sh^c_{\Lambda_{\Gamma}}(T^2)$};
	\node (d) at (\xa,-\ya) {$\Sh^c_{\Lambda_{\Gamma'}}(T^2).$};
	\draw[dashed,->] (a) to  (b); \draw[->] (c) to node[below, rotate=90] {$\sim$} (d);
	\draw[right hook->] (b) to (d); \draw[right hook->] (a) to (c);\\};
\end{tikzpicture}
\]
Here the right map is the equivalence defined by a canonical local isotopy $\Lambda_{\Gamma} \to \Lambda_{\Gamma'}$ and the left map is the cluster $\cX$-transformation associated to the given quiver mutation. Composition with this local isotopy identifies the sets of isotopies from $\Lambda_{\stackfano}$ to $\Lambda_\Gamma$ and $\Lambda_{\Gamma'}$, respectively. We immediately obtain the following result.

\begin{corollary}\label{cor:clusterintro} Let $\Gamma$, $\Gamma' \subset T^2$ be two consistent bipartite graphs differing by a square move. Then we have a commutative diagram
	\[
	\begin{tikzpicture}
[thick,>=\arrtip,every text node part/.style={align=center}]
\newcommand*{\xa}{2.8}; \newcommand*{\xb}{2.8}; \newcommand*{\xc}{2.5}; \newcommand*{\xd}{2.5}
\newcommand*{\ya}{1.5}; \newcommand*{\yb}{1.8}; \newcommand*{\xe}{5.6};
\node[matrix] (left) at (0,0) {
	\node (a) at (0,0) {$\Loc_1(L_\Gamma)$};
	\node (b) at (0,-\ya) {$\Loc_1(L_{\Gamma'})$};
	\node (c) at (\xa,0) {$\Sh^c_{\Lambda_{\Gamma}}(T^2)$};
	\node (d) at (\xa,-\ya) {$\Sh^c_{\Lambda_{\Gamma'}}(T^2)$};
	\draw[dashed,->] (a) to  (b); \draw[->] (c) to node[below, rotate=90] {$\sim$} (d);
	\draw[right hook->] (b) to (d); \draw[right hook->] (a) to (c);

		\node (e) at (\xe,-\ya*.5) {$\Perf(\cX_{\stackfan})$};
		\draw[->] (c) to (e); 
		\draw[->] (d) to (e); \\};
	\end{tikzpicture}
	\]
	where the left map is the cluster $\cX$-transformation associated to the given quiver mutation, and the maps to $\Perf(\cX_{\stackfan})$ are defined as in Theorem \ref{thm:mainthmintro} by compatible isotopies from $\Lambda_\Gamma$, $\Lambda_{\Gamma'}$ to $\Lambda_{\stackfano}$. 
\end{corollary}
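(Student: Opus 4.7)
The strategy is to prove the corollary by pasting two commutative diagrams horizontally: on the left, the STWZ square recalled immediately before the statement, and on the right, a triangle
\[
\Sh^c_{\Lambda_\Gamma}(T^2) \;\longrightarrow\; \Sh^c_{\Lambda_{\Gamma'}}(T^2) \;\longrightarrow\; \Perf(\cX_\stackfan), \qquad \Sh^c_{\Lambda_\Gamma}(T^2) \;\longrightarrow\; \Perf(\cX_\stackfan),
\]
whose two composite arrows out of $\Sh^c_{\Lambda_\Gamma}(T^2)$ agree. The STWZ square is given, so the only thing to verify is the commutativity of this triangle; once that is in hand, concatenation yields the corollary.

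Before constructing the triangle I would record the combinatorial input that makes the right-hand target $\Perf(\cX_\stackfan)$ well defined for both $\Gamma$ and $\Gamma'$: a square move alters the bipartite graph only in a disk around the mutated face, and the zig-zag paths in that disk are merely reconnected and locally isotoped. The oriented set of primitive homology classes carried by zig-zag paths, which determines the stacky fan $\stackfan$, is therefore unchanged. Consequently the Legendrians $\Lambda_\Gamma$ and $\Lambda_{\Gamma'}$ are both Legendrian isotopic to the same $\Lambda_{\stackfano}$, and the canonical local isotopy $\beta\colon \Lambda_\Gamma \to \Lambda_{\Gamma'}$ from \cite{STWZ} realizing the square move can be chosen supported in the same disk.

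To establish the triangle, I would choose the isotopies $\alpha\colon \Lambda_\Gamma \to \Lambda_{\stackfano}$ and $\alpha'\colon \Lambda_{\Gamma'} \to \Lambda_{\stackfano}$ defining the two maps to $\Perf(\cX_\stackfan)$ to be \emph{compatible} in the sense that $\alpha$ is homotopic (through Legendrian isotopies) to $\alpha' \circ \beta$. This is possible because $\beta$ is supported in a disk away from a representative of every other isotopy class of zig-zag path. Invoking the GKS functoriality --- composition of quantizations equals quantization of compositions, up to canonical natural isomorphism --- the equivalence $\Sh^c_{\Lambda_\Gamma}(T^2) \congto \Sh^c_{\Lambda_{\stackfano}}(T^2)$ attached to $\alpha$ coincides with the quantization of $\beta$ followed by the equivalence attached to $\alpha'$. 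Postcomposing with the common fully faithful inclusion $\Sh^c_{\Lambda_{\stackfano}}(T^2) \into \Sh^c_{\Lambda_\stackfan}(T^2)$ and with the $CCC$ yields the desired triangle. Pasting with the STWZ square then gives the corollary.

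The main obstacle is the compatibility/functoriality step: one must check that the space of Legendrian isotopies from $\Lambda_\Gamma$ to $\Lambda_{\stackfano}$ is identified, by composition with $\beta$, with the space of isotopies from $\Lambda_{\Gamma'}$ to $\Lambda_{\stackfano}$ in a way that is respected by GKS quantization. Locality of the square move makes this plausible, but it requires invoking the coherence properties of the GKS construction on homotopies of isotopies rather than merely on individual isotopies; I would expect this to be packaged cleanly using the sheaf-of-categories formulation of \cite{GKS}, reducing the verification to a contractibility statement for the relevant space of choices inside the supporting disk.
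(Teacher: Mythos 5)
Your argument is essentially the paper's: the corollary is obtained by pasting the STWZ square with the observation that the two maps to $\Perf(\cX_{\stackfan})$ are defined by compatible isotopies, i.e.\ the isotopy $\Lambda_{\Gamma'} \to \Lambda_{\stackfano}$ is taken to be the concatenation of the local square-move isotopy with the chosen isotopy $\Lambda_\Gamma \to \Lambda_{\stackfano}$, so the GKS equivalences compose and the triangle commutes after applying the inclusion into $\Sh^c_{\Lambda_{\stackfan}}(T^2)$ and the $CCC$. Your closing concern about coherence over homotopies of isotopies is unnecessary, since ``compatible'' is simply concatenation and Corollary \ref{cor:LegendrianGKS} already makes quantization compatible with concatenation of Legendrian isotopies.
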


The further corollary that the spectral curve is preserved by cluster transformations, hence up to a scalar so are the Goncharov-Kenyon Hamiltonians, is proved combinatorially in \cite[Theorem 4.7]{GK}. 

The images of $\Loc_1(\Gamma)$ in $\Sh^c_{\Lambda_{\stackfano}}(T^2)$ for various choices of graph $\Gamma$ and isotopy $\Lambda_\Gamma \to \Lambda_{\stackfano}$ have the common property that they consist of sheaves whose microstalks along $\Lambda_{\stackfano}$ form a rank one local system. Such sheaves were called microlocal rank one in \cite{STZ} and simple sheaves in \cite{KS94}. It follows from Proposition \ref{prop:isotopyintro} that the moduli space of microlocal rank one sheaves contains a countable family of components each of which has a (partial) cluster $\cX$-structure. On the constructible side the components are indexed by the Euler characteristic of the stalk of a sheaf at any point of $T^2$. Passing to the coherent side by Theorem \ref{thm:mainthmintro}, the image of a microlocal rank one sheaf in $\Perf(\cX_{\stackfan})$ is generically a line bundle supported on a smooth curve, and the components are indexed by the degree of this bundle.

A sequence of square moves that takes a bipartite graph $\Gamma$ back to itself yields an autoisotopy of of $\Lambda_\Gamma$, hence an autoequivalence of $\Sh^c_{\Lambda_\Gamma}(T^2)$. At the level of  moduli spaces this is an example of the automorphism of a cluster variety attached to a mutation periodic sequence. That in the present case such automorphisms act on spectral data by tensoring with line bundles, and in particular preserve the Goncharov-Kenyon Hamiltonians, was observed in \cite{GK} (see also \cite{FM}). This also follows immediately from our mirror-symmetric description of the spectral transform: any isotopy $\Lambda_\Gamma \to \Lambda_{\stackfano}$ intertwines the autoisotopy defined by a periodic sequence of square moves with a composition of the $\sigma_\rho$, hence by Proposition \ref{prop:isotopyintro} it acts on spectral data by tensoring with a line bundle. 

\subsection*{Further context} Our results complement many well established other connections between dimer models and mirror symmetry. The Legendrian $\Lambda_{\stackfan}$ can be identified with a skeleton of a generic fiber of the Hori-Vafa potential $W \in \bC[T_M]$ \cite{HV}, where $T_M$ is the dual torus of $T_N \subset \cX_{\stackfan}$ \cite{RSTZ,GS17,Zho18b}. This is a Laurent polynomial whose Newton polygon has vertices on the rays of $\stackfan$. 

Many aspects of mirror symmetry for $\cX_{\stackfan}$ and the total space $Y$ of its anticanonical bundle can be conveniently described in terms of a consistent bipartite graph $\Gamma^\vee$ for which $W^{-1}(0) \subset T_M$ is a spectral curve. The derived category $\Coh(Y)$ of coherent sheaves on $Y$ is equivalent to the derived category of modules for a Jacobian algebra $J$ of the dual quiver of $\Gamma^\vee$, while $\cX_{\stackfan}$ itself is derived equivalent to the module category of a subalgebra of $J$ \cite{HHV,IU09}. Mirror symmetrically, $\Gamma^\vee$ can be understood as encoding the intersection pattern of a collection of Lagrangian three-spheres  in the mirror $\{ W = uv \} \subset T_M \times \bC^2$ of $Y$, and $J$ as encoding relations in its Fukaya category \cite{FHKV,FU10}. This summary only scratches the surface of an extensively developed circle of ideas: an incomplete sampling of references includes \cite{HK,ORV,FHVWK,FHMSVW,HV07,FV,UY,Sze,Boc,BM,Dav,MR,Bro,NN,Nag}. Note that in the present paper, while we are also interested in the $B$-side of mirror symmetry for $\cX_{\stackfan}$, it is Laurent polynomials on $T_N \subset \cX_{\stackfan}$ rather than on the dual torus $T_M$ which play the leading role -- these have Newton polygons with edges normal to the rays of $\stackfan$, as opposed to vertices on these rays.

We also note that the combinatorial construction of Lagrangians relevant to mirror symmetry has been the topic of other recent and ongoing works, see \cite{Mik,Mat18a,Mat18b,Hic18}. Here we apply the construction of \cite{STWZ} in a similar spirit, but with a tropical coamoeba roughly in the role played by a tropical amoeba in the cited works. Results similar to our Proposition \ref{prop:isotopyintro} were also recently obtained in \cite{Han18}, but with the role of Legendrian isotopies replaced monodromies of coefficients in Landau-Ginzburg potentials. It is plausible to us that brane brick models \cite{FLS16a,FLS16b,FLSV} (see also \cite{FU14}) and their generalizations provide a combinatorial framework around which various aspects of the present work could be extended to higher dimensions. 

Finally, it is an elementary check, see for example \cite{FHKV,GK}, that the conjugate Lagrangians~$L$ and their mirror smooth spectral curves $C$ all have the same topological type.  One expects an explanation for this in terms of hyperk\"ahler rotation.  Indeed, the spectral curves are Lagrangian with respect to the holomorphic symplectic form $d \log(x) \wedge d \log(y)$, which is compatible with the flat hyperk\"ahler metric associated to any definite inner product on $\bZ^2$.  When $\bZ^2$ is the square lattice and the spectral curve is a Harnack curve, the description of $C$ via the Ronkin function \cite{KO} can be used to see that, in one of its symplectic structures, a natural symplectomorphism from $T_N$ to $T^* T^2$ carries $C$ to a Lagrangian asymptotic to a Legendrian link isotopic to~$\Lambda_{\stackfano}$. 

\subsection*{Organization} The structure of the paper is as follows. In Section~\ref{sec:CCC} we review the coherent-constructible correspondence. We prove a general formula computing the restriction of a coherent sheaf on a toric stack to the open torus orbit in terms of the constructible side of the $CCC$. In Section~\ref{sec:Legendrians} we study Legendrian links arising from toric stacks of dimension two and prove that a certain consistency condition on a Legendrian link $\Lambda \subset T^\infty T^2$ is a necessary and sufficient condition for it to be (cusplessly) Legendrian isotopic to one arising from a toric DM stack. In Section~\ref{sec:bipartite} we review how such Legendrians appear in the theory of dimer models. In Section~\ref{sec:Kasteleyn} we prove our main theorem, and in Section~\ref{sec:isotopies} establish a mirror relationship between Legendrian isotopies and tensor products with line bundles. Finally, in Section~\ref{sec:satellites} we characterize the mirror operation of pushing forward to a coarse moduli space as arising from the action of a specific Legendrian degeneration on constructible sheaves, clarifying the relationship between the present work and related ones in which stacks do not appear. 

\subsection*{Notation}
\label{subsec:notation}

Throughout, we fix a coefficient field $\coeffs$.  Some of the references we rely on assume for simplicity that $\coeffs$ is the field of complex numbers, though this hypothesis does not play any role in our constructions and arguments.  In fact with a little more effort one could replace $\coeffs$ with a more general ring. We write $\Sh(M)$ for the unbounded dg derived category of sheaves of $\coeffs$-vector spaces on a manifold $M$ with possibly nonempty boundary; we refer to an object of $\Sh(M)$ simply as a sheaf. We write $\Sh^c(M)$ for the subcategory of sheaves with constructible cohomology with respect to some Whitney stratification. Given a sheaf $\cF \in \Sh(M)$ we write $SS(\cF) \subset T^*M$ for the microsupport or singular support of $\cF$. Throughout all functors will be assumed derived unless otherwise stated. 

Given a conic Lagrangian subset $L \subset T^*M$ we write $\Sh_L(M) \subset \Sh(M)$ for the full subcategory of sheaves with microsupport contained in $L$, similarly for $\Sh^c_L(M)$. We additionally have the subcategory $\Sh^w_L(M) \subset \Sh_L(M)$ of compact objects, also called wrapped constructible sheaves \cite{Nad16}. Occasionally we cite basic results from \cite{KS94} which are stated for $\Sh^c_L(M)$ or $\Sh^c(M)$ but known to also hold for $\Sh_L(M)$ or $\Sh(M)$; we refer to \cite[Sec.~2]{JT17} or \cite{RS18} for a general discussion, citing \cite{KS94} without comment in the text.  We write $\Loc_1(M) \subset \Loc(M) := \Sh_{M}(M)$ for the subcategory of local systems with rank one stalks concentrated in degree zero, and somewhat abusively also for the moduli space of such objects (which is an algebraic torus when $M$ is a compact torus). 

We write $T^\infty M$ for the cosphere bundle of $M$, which we view as the fiberwise boundary of the fiberwise spherical compactification of $T^* M$. Given a subset $\Lambda \subset T^\infty M$, we write $\Sh_\Lambda(M) \subset \Sh(M)$ for the full subcategory of sheaves with microsupport contained in the union of the zero section and the cone over $\Lambda$, similarly for $\Sh^c_\Lambda(M)$, $\Sh^w_\Lambda(M)$. We write $SS^\infty(\cF) \subset T^\infty M$ for the asymptotic microsupport of $\cF$, i.e. the intersection with $T^\infty M$ of the closure of $SS(\cF)$ in the fiberwise spherical compactification of $T^*M$. 

Given a scheme or algebraic stack $Y$ we write $\Coh(Y)$ for the bounded dg derived category of coherent sheaves on $Y$, $\IndCoh(Y)$ for the ind-completion thereof, $\QCoh(Y)$ for the unbounded dg derived category of quasicoherent sheaves, and $\Perf(Y)$ for the subcategory of perfect complexes. 

\vskip.2in
\noindent{\bf Acknowldgements.}
We are grateful to Peng Zhou for discussions about singular support of degenerations. We also warmly thank Vivek Shende for the collaboration \cite{STWZ}, on which this project builds. D.T. is supported by NSF grant DMS-1510444.
H.W. was supported by NSF grant DMS-1801969 and NSF Postdoctoral Research Fellowship DMS-1502845.
E.Z. has been supported by NSF grants DMS-1406024 and DMS-1708503.

\section{The coherent-constructible correspondence}\label{sec:CCC}
\label{subsec:rttot}

In this section we explain how to compute the restriction of a coherent sheaf on a toric variety to its open torus orbit in terms of the constructible side of the CCC. We begin by fixing our notation and reviewing the statement of the CCC.

We fix dual lattices $M$ and $N$ of rank $n$, setting $M_{\bR} := M \otimes \bR$, $N_{\bR} := N \otimes \bR$.  We define a compact torus and a dual algebraic torus by 
\begin{equation}
\label{eq:TnTN-notation}
T^n := M_{\bR}/M, \qquad T_N := N \otimes \coeffs^{\times}
\end{equation}
and write $p:M_{\bR} \to T^n$ for the universal covering homomorphism.

Given a fan $\Sigma$ in $N_\bR$ we write $X_\Sigma$ for the associated toric partial compactification of $T_N$. We also consider extended data $\stackfan = (\Sigma, \widehat{\Sigma}, \beta)$, where $\widehat{\Sigma}$ is a fan in an auxiliary lattice $\widehat{N}$ 
and $\beta: \widehat{N} \to N$ is a homomorphism with finite cokernel and which induces a combinatorial equivalence between $\widehat{\Sigma}$ and $\Sigma$. We refer to $\stackfan$ as a stacky fan, though several variants appear in the literature \cite{BCS05,GS15,Tyo12}. Associated to $\stackfan$ is a toric DM stack $\cX_{\stackfan}$ which has no nontrivial stabilizers on the open subspace $T_N \subset \cX_{\stackfan}$ and whose coarse moduli space is $X_\Sigma$ (see e.g. \cite[\S 5]{Kuw16}). When $\beta$ is an isomorphism $\cX_{\stackfan}$ has no nontrivial stabilizers at all and coincides with the variety $X_\Sigma$.

\begin{notation}
\label{not:cohcoh}
We will be most interested in the following categories of sheaves on $\cX_{\stackfan}$.
\begin{enumerate}
	\item $\Perfprop(\cX_{\stackfan})$, the dg category of perfect complexes with proper support,
	\item $\Coh(\cX_{\stackfan})$, the bounded dg derived category of coherent sheaves.
\end{enumerate}
\end{notation}

To $\stackfan$ we also associate a conic Lagrangian $L_{\stackfan}$ in $T^* T^n \cong M_\bR/M \times N_\bR$ as follows. By assumption, $\beta$ induces a correspondence between the cones $\hat{\sigma} \in \widehat{\Sigma}$ and the cones $\sigma \in \Sigma$. We set $N_\sigma = \beta(\widehat{N} \cap \mathrm{span}(\hat{\sigma}))$ and $M_\sigma = \Hom(N_\sigma, \bZ)$. Given $\chi \in M_\sigma$ we set
$$ \sigma_\chi^\perp := \{ m \in M_\bR | \langle m, s \rangle = \langle \chi, s \rangle \text{ for any } s \in \beta(\widehat{N} \cap \hat{\sigma}) \}. $$
Each $\sigma_\chi^\perp$ is a translate of $\sigma^\perp := \sigma_0^\perp$ and only depends on the image of $\chi$ in the cokernel of the natural map $M \to M_\sigma$. We can now define
\begin{equation}
\label{eq:LSigma}
L_{\stackfan} := \bigcup_{\sigma \in \Sigma} \bigcup_{\chi \in M_\sigma} p(\sigma_\chi^\perp) \times (-\sigma).
\end{equation}
Note that when $\beta$ is an isomorphism, hence $\cX_{\stackfan} \cong X_\Sigma$, we have $\sigma_\chi^\perp = \sigma^\perp$ for all $\chi$. In this case we may simply write $L_\Sigma$ for $L_{\stackfan}$.

Since $L_{\stackfan}$ is conic it defines a Legendrian $\Lambda_{\stackfan}$ in the cosphere bundle $T^\infty T^n$, the quotient by $\bR_+$ of the complement of the zero section in $T^* T^n$. We write $T^\infty T^n,$ as we view the cosphere bundle as the boundary of the fiberwise spherical compactification of $T^*T^n$ (in particular, we do not fix a choice of contact form on $T^\infty T^n$, merely its contact distribution).

\begin{figure}
	\begin{tikzpicture}
	\newcommand*{\edgelen}{4}; \newcommand*{\vertrad}{.12}; \newcommand*{\crad}{.05}
	\newcommand*{\gspace}{.9}; \newcommand*{\angdelta}{22.5};
	\node (b) [matrix] at (0,0) {
		\coordinate (tl) at (0,\edgelen); \coordinate (tr) at (\edgelen,\edgelen);
		\coordinate (bl) at (0,0); \coordinate (br) at (\edgelen,0);
		\foreach \a/\b in {tl/bl, tr/br, br/tl} {\draw[asdstyle,righthairs] (\a) to (\b);}
		\foreach \a/\b in {tr/tl, br/bl} {\draw[asdstyle,lefthairs] (\a) to (\b);}
		\foreach \c in {tl, tr, bl, br} \foreach \ang in {0,...,7}
			{\draw[blue] ($(\c)+(\ang*\angdelta:\vertrad)$) to ($(\c)+(\ang*\angdelta+180:\vertrad)$);}
	\\};
	\node (c) [matrix] at (5.3,0) {
		\coordinate (tl) at (0,\edgelen); \coordinate (tr) at (\edgelen,\edgelen);
		\coordinate (bl) at (0,0); \coordinate (br) at (\edgelen,0); 
		\coordinate (tm) at (\edgelen/2,\edgelen); \coordinate (bm) at (\edgelen/2,0);
		\coordinate (mm) at (\edgelen/2,\edgelen/2);
		\foreach \a/\b in {tl/bl, tr/br} {\draw[asdstyle,righthairs] (\a) to (\b);}
		\foreach \a/\b in {tr/tl, br/bl, bm/tm, tl/br} {\draw[asdstyle,lefthairs] (\a) to (\b);}
		\foreach \c in {tl, tr, bl, br} \foreach \ang in {0,...,7} 
			{\draw[blue] ($(\c)+(\ang*\angdelta:\vertrad)$) to ($(\c)+(\ang*\angdelta+180:\vertrad)$);}
		\foreach \c in {tm, bm} \foreach \ang in {8,...,12} 
			{\draw[blue,thick] ($(\c)+(\ang*\angdelta:\vertrad+.02)$) to (\c);}
		\foreach \ang in {2,...,8} 
			{\draw[blue,thick] ($(mm)+(\ang*\angdelta:\vertrad+.02)$) to (mm);}
	\\};
	\node (a) [matrix] at (-5.3,0) {
		\foreach \x in {0,...,4} \foreach \y in {0,...,4}
			{\fill (\x*\gspace,\y*\gspace) circle (\crad);}
		\foreach \x/\y in {4.4/2,2/4.4,-.3/-.3}
			{\draw[-stealth',thick] (2*\gspace,2*\gspace) to (\x*\gspace,\y*\gspace);}
		\foreach \x/\y in {4/2,2/3,1/1}
			{\fill[red] (\x*\gspace,\y*\gspace) circle (\crad*1.5);}
		\node (e1) at (2.7*\gspace,3.4*\gspace) {$\beta(e_1)$};
		\node (e2) at (4*\gspace,1.6*\gspace) {$\beta(e_2)$};
		\node (e3) at (.7*\gspace,1.5*\gspace) {$\beta(e_3)$};
	\\};
	\node (atxt) at (-5.3, -2.7) {$\Sigma \subset N_\bR$};
	\node (btxt) at (0, -2.7) {$\Lambda_\Sigma \subset T^\infty T^2$};
	\node (ctxt) at (5.3, -2.7) {$\Lambda_{\stackfan} \subset T^\infty T^2$};
	\end{tikzpicture}
	\caption{The Legendrians in $T^\infty T^2$ associated to $X_\Sigma \cong \bP^2$ and a stack $\cX_{\stackfan}$ whose coarse moduli space is $\bP^2$.}
\end{figure}\label{fig:stackyP2}

\begin{example}
	When $M \cong \bZ^2$ and $\Sigma$ is the complete fan with three rays generated respectively by $(1,0)$, $(0,1)$, and $(-1,-1)$, then $X_\Sigma \cong \bP^2$. In Figure \ref{fig:stackyP2} we depict the Legendrian $\Lambda_\Sigma \subset T^\infty T^2$, which is the union of the cocircle above $p(0)$ and three circles which project to the geodesics $\{p(\sigma^\perp)\}_{\sigma \in \Sigma(1)}$ (where $\Sigma(1)$ denotes the set of rays of $\Sigma$). Here and elsewhere we convey a Legendrian $\Lambda \subset T^\infty T^2$ by drawing its front projection $\pi(\Lambda) \subset T^2$ together with hairs indicating the codirections which comprise $\Lambda$ itself.

	 Now define $\beta: \widehat{N} \cong \bZ^3 \to N$ by setting 
	 $$ \beta(e_1) = (0,1), \quad \beta(e_2) = (2,0), \quad \beta(e_3) = (-1,-1),$$ 
	 and let $\widehat{\Sigma} \subset \widehat{N}_\bR$ be the (non-complete) fan formed by the walls of the positive octant. In this case $\cX_{\stackfan}$ is a stacky $\bP^2$ in which one component of the toric boundary has a generic $\bZ_2$ stabilizer. The Legendrian $\Lambda_{\stackfan}$ is the union of $\Lambda_\Sigma$, another circle which projects to a translate of $p(\sigma^\perp)$ (where $\sigma$ is the ray generated by $(1,0)$), and two intervals in the cocircles above $(1/2,0)$ and $(1/2,1/2)$.
\end{example} 

\begin{notation}
\label{not:shsh}
We will work with the following categories of sheaves on $T^n$ \cite{KS94,Nad16}.
\begin{enumerate}
	\item $\Sh^c_{\Lambda_{\stackfan}}(T^n)$, the dg category of constructible sheaves with asymptotic microsupport contained in $\Lambda_{\stackfan}$, 
	\item $\Sh^w_{\Lambda_{\stackfan}}(T^n)$, the dg category of wrapped constructible sheaves with asymptotic microsupport contained in $\Lambda_{\stackfan}$.
\end{enumerate}
\end{notation}
Wrapped constructible sheaves are by definition the compact objects in the unbounded dg derived category $\Sh_{\Lambda_{\stackfan}}(T^n)$ of sheaves with prescribed microsupport but no further size restriction; in particular the stalks of a wrapped constructible sheaf $\cF$ need not be finite-dimensional. 
Convolution on $T^n$ induces symmetric monoidal structures on  $\Sh^c_{\Lambda_{\stackfan}}(T^n)$ and $\Sh_{\Lambda_{\stackfan}}(T^n)$, and $\Sh^w_{\Lambda_{\stackfan}}(T^n) \subset \Sh_{\Lambda_{\stackfan}}(T^n)$ is closed under convolution when $\cX_{\stackfan}$ is smooth. 

The following result of Kuwagaki builds on and complements the results of \cite{Bon06,FLTZ,FLTZ3,Tre10,SS14,Kuw17,Zho17}, which taken together categorify the familiar relation between polytopes and line bundles on toric varieties 
\begin{theorem}[\cite{Kuw16}]\label{thm:CCC}
	For any stacky fan $\stackfan$ there is a commutative diagram
	\[
	\hspace*{-50pt}
	\begin{tikzpicture}
	[thick,>=\arrtip]
	\node (a) at (-1.8,0) {$\CCC_{\cX_{\stackfan}}:$};
	\node (a) at (0,0) {$\Coh(\cX_{\stackfan})$};
	\node (b) at (4,0) {$\Sh^w_{\Lambda_{\stackfan}}(T^n)$};
	\node (c) at (0,-1.5) {$\Perfprop(\cX_{\stackfan})$};
	\node (d) at (4,-1.5) {$\Sh^c_{\Lambda_{\stackfan}}(T^n).$};
	\draw[->] (a) to node[above] {$\sim$} (b);
	\draw[right hook->] (d) to (b);
	\draw[right hook->] (c) to (a);
	\draw[->] (c) to node[above] {$\sim$} (d);
	\end{tikzpicture}
	\]
	where the top and bottom functors are equivalences.
	The bottom is a monoidal equivalence with respect to the tensor product on $\cX_{\stackfan}$ and the convolution product on $T^n$, as is the top when $\cX_{\stackfan}$ is smooth. 
\end{theorem}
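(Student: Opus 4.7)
The plan is to adapt the strategy used in \cite{FLTZ3, Kuw17, Zho17} for smooth toric varieties, extending it to the stacky case by a local-to-global argument. Both sides of the equivalence are compactly generated dg categories, so it suffices to prove the equivalence on compact objects $\Perfprop(\cX_{\stackfan}) \congto \Sh^c_{\Lambda_{\stackfan}}(T^n)$ and then ind-complete to obtain the top equivalence.

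First I would construct the functor on a generating set. The category $\Coh(\cX_{\stackfan})$ is generated by line bundles $\cL_\chi$ indexed by characters $\chi \in M_\sigma$ as $\sigma$ ranges over cones of $\Sigma$. To each such $\cL_\chi$ I would assign the costandard constructible sheaf given by pushforward of the constant sheaf along the inclusion of an open region of the form $p(\sigma_\chi^\perp + \Int(\sigma^\vee))$; by the construction \eqref{eq:LSigma} its asymptotic microsupport lies in $\Lambda_{\stackfan}$. One then checks the key Hom calculation: morphisms $\cL_\chi \to \cL_{\chi'}$ in $\Coh(\cX_{\stackfan})$ form a graded sum indexed by lattice points in $(\chi' - \chi) + (M \cap \sigma^\vee)$, matching the constructible hom between the two costandard sheaves computed via standard-costandard adjunction.

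To globalize, I would reduce to local computations on the toric charts $[\Spec \coeffs[\hat\sigma^\vee \cap \widehat{M}] / G_\sigma]$, where $G_\sigma$ is the finite stabilizer determined by the restriction of $\beta$ to the span of $\hat\sigma$. The constructible side admits a parallel decomposition by restriction to open pieces of $\Lambda_{\stackfan}$ lying over neighborhoods of $p(\sigma^\perp)$. For $G_\sigma$ trivial this recovers the smooth variety CCC directly; in general one descends from a smooth toric ambient cover, coming from a smooth refinement of $\widehat{\Sigma}$, equivariantly to $\cX_{\stackfan}$. The enlargement of $\Lambda_{\stackfan}$ by additional parallel copies of $p(\sigma^\perp)$, indexed by cosets in $M_\sigma / M$, should exactly mirror the enlargement of the coherent category by line bundles associated to non-extendable characters.

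Monoidality is verified on generators: convolution of the costandard sheaves assigned to $\cL_\chi$ and $\cL_{\chi'}$ produces the costandard sheaf for the translated polytope data corresponding to $\cL_{\chi + \chi'}$, since this operation is additive in the translation parameter. For the top equivalence, monoidality further requires $\Coh(\cX_{\stackfan})$ to coincide with $\Perf(\cX_{\stackfan})$ on generators, which holds precisely when $\cX_{\stackfan}$ is smooth. The main obstacle is the stacky descent step: the matching between the multiple parallel components of $\Lambda_{\stackfan}$ over each $p(\sigma^\perp)$ and the enlargement of the coherent category by characters with nontrivial stabilizer action must be set up uniformly, and the finite cokernel of $\beta$ --- which produces generic stabilizers on $\cX_{\stackfan}$ --- must be tracked throughout so that both sides carry symmetric mirror data.
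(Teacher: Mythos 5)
You should first note that the paper contains no proof of this statement: it is imported verbatim from Kuwagaki \cite{Kuw16} (building on \cite{FLTZ,FLTZ3,Tre10,SS14,Kuw17,Zho17}), so there is no internal argument to compare against; what you have written is a sketch of a proof of that external theorem, and as such it has two genuine gaps. The first is the opening reduction. The compact objects of $\Sh_{\Lambda_{\stackfan}}(T^n)$ are the wrapped sheaves $\Sh^w_{\Lambda_{\stackfan}}(T^n)$, not the constructible sheaves $\Sh^c_{\Lambda_{\stackfan}}(T^n)$ (wrapped sheaves may have infinite-rank stalks, as the paper stresses), and on the coherent side the compact objects of $\IndCoh(\cX_{\stackfan})$ are $\Coh(\cX_{\stackfan})$, not $\Perfprop(\cX_{\stackfan})$. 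Ind-completing the bottom row therefore yields $\mathrm{Ind}(\Perfprop)\simeq\mathrm{Ind}(\Sh^c_{\Lambda_{\stackfan}})$, whose categories of compact objects are again the bottom row; it does not produce the top equivalence $\Coh(\cX_{\stackfan})\simeq\Sh^w_{\Lambda_{\stackfan}}(T^n)$. In Kuwagaki's argument the logic runs in the opposite direction: a large equivalence is built first, by descent over the cones of the fan, and the two pairs of subcategories in the diagram are then identified inside it.

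The second gap is generation and essential surjectivity. For a general stacky fan the stack $\cX_{\stackfan}$ need not be smooth, and then line bundles do not generate $\Coh(\cX_{\stackfan})$ (the theorem deliberately distinguishes $\Coh$ from $\Perf$, and asserts monoidality of the top row only in the smooth case). Even when they do generate the coherent side, matching Homs between line bundles and their costandard/theta-sheaf images only gives full faithfulness on the subcategory those images generate; the substantive content of the CCC is that this subcategory exhausts $\Sh^c_{\Lambda_{\stackfan}}(T^n)$, respectively $\Sh^w_{\Lambda_{\stackfan}}(T^n)$, and your sketch never addresses this. Your ``globalize by reducing to toric charts'' step is indeed the skeleton of the actual proof, but the hard point is precisely that the constructible side satisfies the corresponding descent: one must cut $\Sh_{\Lambda_{\stackfan}}(T^n)$ into pieces indexed by the cones via microsupport conditions, show this gluing matches Zariski descent for the stacky charts $[\Spec \coeffs[\hat\sigma^\vee \cap \widehat{M}]/G_\sigma]$, and track the isotropy throughout. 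Saying that the extra parallel components of $\Lambda_{\stackfan}$ ``should exactly mirror'' the extra characters is a restatement of what must be proved, not an argument, so as it stands the proposal does not establish the theorem.
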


\begin{example}\label{ex:surfaceCCC}
	Suppose that $M \cong \bZ^2$ and that $\Sigma$ is a complete fan, in which case $X_\Sigma$ is a complete toric surface. We have the following key examples of the coherent-constructible correspondence:
	\begin{enumerate}
		\item Suppose $\cF \in \Coh(X_\Sigma)$ is the structure sheaf of a point $(x, y) \in T_N \subset X_\Sigma$. Then $\CCC_{X_\Sigma}(\cF)$ is a local system on $T^2$ with holonomies $x$ and $y$ around the given generators of $M \cong \pi_1(T^2)$, placed in degree $-2$. 
		\item Suppose $\cF \in \Coh(X_\Sigma)$ is an ample line bundle on $X_\Sigma$. Then there is a polygon $P \subset M_\bR$ such that $\CCC_{X_\Sigma}(\cF) \cong p_* i_! \omega_{P^\circ}$, where $\omega_{P^\circ}$ is the dualizing sheaf on the interior $P^\circ$ of $P$, $i: P^\circ \into M_\bR$ the inclusion, and $p: M_\bR \onto T^2$ the projection. The inward normal fan of $P$ is a coarsening of $\Sigma$.
	\end{enumerate}
\end{example}

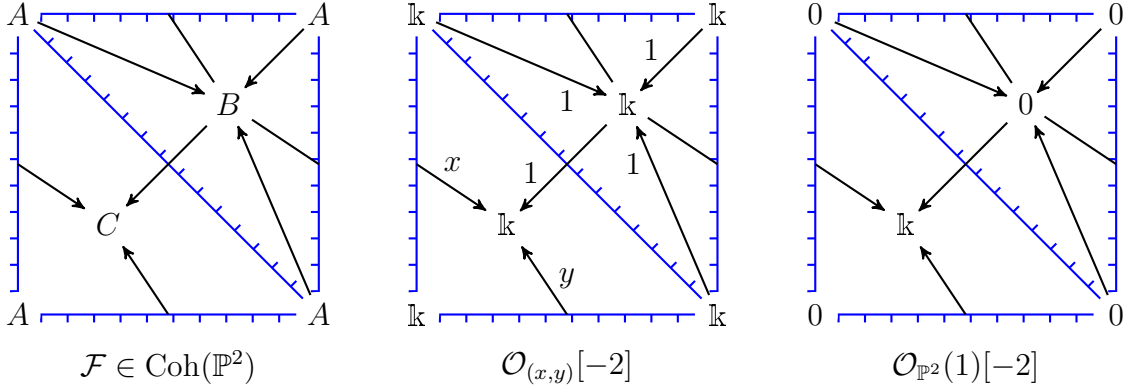
\begin{figure}
	\begin{tikzpicture}
	\newcommand*{\edgelen}{4}; \newcommand*{\wcircrad}{.3}
	\newcommand*{\xytrtri}{.7}; \newcommand*{\xybltri}{.3};
	\newcommand*{\ashort}{8}; \newcommand*{\bshort}{4};
	\node (a) [matrix] at (-5.3,0) {
		\coordinate (tl) at (0,\edgelen); \coordinate (tr) at (\edgelen,\edgelen);
		\coordinate (bl) at (0,0); \coordinate (br) at (\edgelen,0);
		\foreach \a/\b in {tl/bl, tr/br, br/tl} {\draw[asdstyle,righthairs] (\a) to (\b);}
		\foreach \a/\b in {tr/tl, br/bl} {\draw[asdstyle,lefthairs] (\a) to (\b);}
		\foreach \c in {tl, tr, bl, br} {
			\fill[white] (\c) circle (\wcircrad);
			\node at (\c) {$A$};}
		\node (trtri) at (\xytrtri*\edgelen, \xytrtri*\edgelen) {$B$};
		\node (bltri) at (\xybltri*\edgelen, \xybltri*\edgelen) {$C$};
		\draw[genmapstyle, shorten <=\ashort] (tl) to (trtri);
		\draw[genmapstyle, shorten <=\ashort] (br) to (trtri);
		\draw[genmapstyle, shorten <=\ashort, shorten >=-\bshort+1] (tr) to (trtri);
		\draw[thick] (trtri) to (\edgelen,.5*\edgelen); 
		\draw[genmapstyle] (0,.5*\edgelen) to (bltri);
		\draw[thick] (trtri) to (.5*\edgelen,\edgelen); 
		\draw[genmapstyle] (.5*\edgelen,0) to (bltri);
		\draw[genmapstyle, shorten >=-\bshort+2] (trtri) to (bltri);
		\\};
	\node (b) [matrix] at (0,0) {
		\coordinate (tl) at (0,\edgelen); \coordinate (tr) at (\edgelen,\edgelen);
		\coordinate (bl) at (0,0); \coordinate (br) at (\edgelen,0);
		\foreach \a/\b in {tl/bl, tr/br, br/tl} {\draw[asdstyle,righthairs] (\a) to (\b);}
		\foreach \a/\b in {tr/tl, br/bl} {\draw[asdstyle,lefthairs] (\a) to (\b);}
		\foreach \c in {tl, tr, bl, br} {
			\fill[white] (\c) circle (\wcircrad);
			\node at (\c) {$\coeffs$};}
		\node (trtri) at (\xytrtri*\edgelen, \xytrtri*\edgelen) {$\coeffs$};
		\node (bltri) at (\xybltri*\edgelen, \xybltri*\edgelen) {$\coeffs$};
		\draw[genmapstyle, shorten <=\ashort] (tl) to (trtri);
		\draw[genmapstyle, shorten <=\ashort] (br) to (trtri);
		\draw[genmapstyle, shorten <=\ashort, shorten >=-\bshort] (tr) to (trtri);
		\draw[thick] (trtri) to (\edgelen,.5*\edgelen); 
		\draw[genmapstyle] (0,.5*\edgelen) to (bltri);
		\draw[thick] (trtri) to (.5*\edgelen,\edgelen); 
		\draw[genmapstyle] (.5*\edgelen,0) to (bltri);
		\draw[genmapstyle, shorten >=-\bshort] (trtri) to (bltri);
		\node at (.12*\edgelen,.5*\edgelen) {$x$};
		\node at (.5*\edgelen,.12*\edgelen) {$y$};
		\foreach \x/\y in {.38/.47, .50/.71, .72/.5, .78/.88} {\node at (\x*\edgelen,\y*\edgelen) {$1$};}
		\\};
	\node (c) [matrix] at (5.3,0) {
		\coordinate (tl) at (0,\edgelen); \coordinate (tr) at (\edgelen,\edgelen);
		\coordinate (bl) at (0,0); \coordinate (br) at (\edgelen,0);
		\foreach \a/\b in {tl/bl, tr/br, br/tl} {\draw[asdstyle,righthairs] (\a) to (\b);}
		\foreach \a/\b in {tr/tl, br/bl} {\draw[asdstyle,lefthairs] (\a) to (\b);}
		\foreach \c in {tl, tr, bl, br} {
			\fill[white] (\c) circle (\wcircrad);
			\node at (\c) {$0$};}
		\node (trtri) at (\xytrtri*\edgelen, \xytrtri*\edgelen) {$0$};
		\node (bltri) at (\xybltri*\edgelen, \xybltri*\edgelen) {$\coeffs$};
		\draw[genmapstyle, shorten <=\ashort] (tl) to (trtri);
		\draw[genmapstyle, shorten <=\ashort] (br) to (trtri);
		\draw[genmapstyle, shorten <=\ashort, shorten >=-\bshort] (tr) to (trtri);
		\draw[thick] (trtri) to (\edgelen,.5*\edgelen); 
		\draw[genmapstyle] (0,.5*\edgelen) to (bltri);
		\draw[thick] (trtri) to (.5*\edgelen,\edgelen); 
		\draw[genmapstyle] (.5*\edgelen,0) to (bltri);
		\draw[genmapstyle, shorten >=-\bshort] (trtri) to (bltri);
		\\};
	\node (atxt) at (-5.3, -2.7) {$\cF \in \Coh(\bP^2)$};
	\node (btxt) at (0, -2.7) {$\cO_{(x,y)}[-2]$};
	\node (ctxt) at (5.3, -2.7) {$\cO_{\bP^2}(1)[-2]$};
	\end{tikzpicture}
	\caption{Constructible mirrors of some coherent sheaves on $\bP^2$. From left to right: a general object of $\Coh(\bP^2)$, a (shifted) skyscraper at a point $(x,y) \in \bA^2 \subset \bP^2$, and the (shifted) line bundle $\cO_{\bP^2}(1)[-2]$.}
\end{figure}\label{fig:CCC4P2}

\begin{example}
	When $\Sigma$ is the fan of $\bP^2$, an object of $\Sh_{\Lambda_\Sigma}(T^2)$ is determined by its stalks at $p(0)$ and a generic point in each component of $T^2 \smallsetminus \pi(\Lambda_\Sigma)$ --- where $\pi: T^\infty T^2 \onto T^2$ denotes the projection --- together with six generization maps satisfying obvious relations, see Figure \ref{fig:CCC4P2}. In this way the coherent-constructible correspondence recovers the Beilinson description of $\Coh(\bP^2)$ in terms of quiver representations. 
\end{example}

The following lemma will be used in Section \ref{sec:Kasteleyn}. Note that the identification $N \cong \Hom(M,\bZ)$ induces an identification of $\coeffs[T_N]$ with the group algebra of $M \cong \pi_1(T^n)$, and a corresponding equivalence of dg categories
\begin{equation}
\label{eq:shift-this}
\coeffs[T_N]\text{-mod} \cong \Loc(T^n)
\end{equation}
The coherent-constructible correspondence is the composition of this with the shift-by-$n$ functor $F \mapsto F[n]$ (a normalization prescribed by the requirement that the equivalence be monoidal).

\begin{lemma}\label{lem:openorbitmain}
	Let $\cF \in \Sh^w_{\Lambda_{\stackfan}}(T^n)$  be a wrapped constructible sheaf (as in \ref{not:shsh}). Then the restriction of $\CCC^{-1}_{\cX_{\stackfan}}(\cF) \in \Coh(\cX_{\stackfan})$ to $T_N$ is isomorphic as a $\coeffs[T_N]$-module to $\Gamma_c(p^*\cF)$ with the natural action of $\pi_1(T^n)$ by deck transformations. 
\end{lemma}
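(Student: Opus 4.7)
The plan is to identify $\cF \mapsto \Gamma_c(p^*\cF)$ with a convolution on the constructible side, then pass through the monoidal CCC and apply the projection formula for the open embedding $j\colon T_N \hookrightarrow \cX_{\stackfan}$. As the introduction indicates, this convolution plays the role of a wrapping operation on the constructible side.

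First I would define $\cA_{\mathrm{reg}} := p_!\,\underline{\coeffs}_{M_\bR}$: the rank-$\coeffs[M]$ local system on $T^n$ whose stalks are the regular representation of $\pi_1(T^n)=M$. A direct base-change/projection-formula computation on the diagram
\begin{equation*}
M_\bR \times T^n \xrightarrow{\;p \times \mathrm{id}\;} T^n \times T^n \xrightarrow{\;\mu\;} T^n
\end{equation*}
shows that for any $\cF\in\Sh(T^n)$ the convolution $\cF*\cA_{\mathrm{reg}}$ is a locally constant sheaf whose common stalk, with its natural monodromy, is $\Gamma_c(M_\bR,p^*\cF)$ equipped with its $M$-action by deck transformations. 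Under the tautological identification $\Loc(T^n)\simeq\coeffs[T_N]\text{-mod}$ --- which, up to the shift-by-$n$ normalization recalled just before the lemma, is the trivial-fan case of the CCC --- the functor $\cF \mapsto \cF*\cA_{\mathrm{reg}}$ thus recovers $\Gamma_c(p^*\cF)$.

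Next I would identify $\cA_{\mathrm{reg}}$ on the coherent side. Being a local system, it lies in $\Loc(T^n)\subset\Sh_{\Lambda_{\stackfan}}(T^n)$, the subcategory attached to the open substack $T_N\subset\cX_{\stackfan}$, and under the trivial-fan CCC it corresponds to $\cO_{T_N}[-n]$. Functoriality of the CCC in the stacky fan promotes this to $\CCC_{\cX_{\stackfan}}^{-1}(\cA_{\mathrm{reg}})\cong j_*\cO_{T_N}[-n]$, interpreted in a quasi-coherent (or ind-coherent) enlargement of Theorem~\ref{thm:CCC}. Because the CCC is symmetric monoidal, convolution with $\cA_{\mathrm{reg}}$ corresponds to tensor product with $j_*\cO_{T_N}[-n]$; and for $\cG=\CCC_{\cX_{\stackfan}}^{-1}(\cF)$, the projection formula for $j$ gives
\begin{equation*}
\cG\otimes j_*\cO_{T_N}\cong j_*j^*\cG,
\end{equation*}
whose underlying $\coeffs[T_N]$-module is simply $j^*\cG$. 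Canceling the common shift $[-n]$ then identifies this with $\Gamma_c(p^*\cF)$, yielding the lemma.

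The main obstacle is that both $\cA_{\mathrm{reg}}$ and $j_*\cO_{T_N}$ typically lie outside the compact subcategories $\Sh^w_{\Lambda_{\stackfan}}(T^n)$ and $\Coh(\cX_{\stackfan})$, so the middle step really takes place in an ind-completed version of the CCC; one must verify in this enlargement that the tautological inclusion $\Loc(T^n)\hookrightarrow\Sh_{\Lambda_{\stackfan}}(T^n)$ is intertwined with $j_*$, and keep careful track of the shift-by-$n$ normalization. Once this compatibility is in place, the remainder of the argument is formal.
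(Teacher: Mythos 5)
Your overall strategy is sound and, at its computational core, coincides with the paper's: the paper also proves the lemma by identifying the functor $\cF \mapsto \Gamma_c(p^*\cF)$ with convolution against the free local system $p_!\omega_{M_\bR} \cong p_!\coeffs_{M_\bR}[n]$, via exactly the base-change argument you sketch. The difference is in how the bridge to the coherent side is built, and that is where your argument has a genuine gap. The step you label ``functoriality of the CCC in the stacky fan,'' namely $\CCC^{-1}_{\cX_{\stackfan}}(\cA_{\mathrm{reg}}) \cong j_*\cO_{T_N}[-n]$ (equivalently, that the tautological inclusion $\Loc(T^n) \hookrightarrow \Sh_{\Lambda_{\stackfan}}(T^n)$ is intertwined with $j_*$ in an ind-completed CCC), is not a formal consequence of Theorem \ref{thm:CCC}: it is the substantive input, and it is essentially equivalent --- by monoidality, the projection formula and adjunction --- to the restriction formula $\CCC_{T_N}(i_{T_N}^*\cG) \cong \CCC_{\cX_{\stackfan}}(\cG) \star p_!\omega_{M_\bR}$ that the paper simply quotes from \cite[Th. 3.8]{FLTZ} and \cite[Cor. 12.8]{Kuw16}. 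You correctly flag this as the main obstacle but do not supply it, so as written the proof is circular at its crux: the only available justification in the literature is the very statement you are trying to repackage, and once you invoke it you no longer need the ind-completion, the mirror of $j_*\cO_{T_N}$, or the projection formula on the coherent side.

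A second, smaller issue: half of the lemma's content is the identification of the $\coeffs[T_N]$-module structure, i.e. that the $M$-action on $\Gamma_c(p^*\cF)$ really is the deck action, compatibly with the base-change isomorphism identifying it with the stalk of $\cF \star \cA_{\mathrm{reg}}$. You assert this in one clause (``with its natural monodromy''), whereas the paper devotes most of its proof to checking it, by writing down the equivariant structures on $p^*\cF$ and $\omega_{M_\bR}$ under the translations $\tau_m$ and tracing them through the Cartesian square defining the convolution. Your shift bookkeeping ($[-n]$ versus $\omega_{M_\bR} \cong \coeffs_{M_\bR}[n]$) is consistent, so once you (a) replace the appeal to ``functoriality'' by the citation above, or prove the $j_*$-compatibility directly, and (b) verify the deck-action claim, the argument closes --- but at that point it is essentially the paper's proof with extra scaffolding rather than an independent route.
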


\begin{proof}
Let $\omega_{M_{\bR}}$ denote the dualizing complex on $M_{\bR}$, so that an orientation of $M_{\bR}$ gives an isomorphism $\omega_{M_{\bR}} \cong \coeffs_{M_{\bR}}[n]$.  We recall that for $\cG \in \Coh(\cX_{\stackfan})$ we have
$$ \CCC_{T_N}(i^*_{T_N} \cG) \cong \CCC_{\cX_{\stackfan}}(\cG) \star p_!(\omega_{M_{\bR}}) $$
in $\Sh_{T^n}(T^n) \cong \Loc(T_N)$ (\cite[Th. 3.8]{FLTZ}, \cite[Cor. 12.8]{Kuw16}). That is, the coherent-constructible correspondence intertwines restriction to $T_N$ and convolution with a shifted free local system on $T^n$.  For any sheaf $\cF \in \Sh(T^n)$, a base-change argument (which we give below) shows that the stalk of the local system $\cF \star p_! \omega_{M_{\bR}}$ at zero is naturally identified with $\Gamma_c((p^* \cF)\otimes \omega_{M_{\bR}})$.  After choosing an orientation of $M_{\bR}$, the projection formula \cite[Prop. 2.6.6]{KS94} gives
\[
\Gamma_c((p^* \cF) \otimes \omega_{M_{\bR}}) = \Gamma_c((p^* \cF) \otimes \pi^* \coeffs[n]) \cong \Gamma_c(p^*\cF) \otimes \coeffs[n] = \Gamma_c(p^* \cF)[n]
\]
Since $\CCC_{T_N}$ is the shift-by-$n$ of the equivalence of categories induced by the isomorphism of rings $\coeffs[T_N] \cong \coeffs[\pi_1(T^n)]$, we may prove the Proposition by giving an isomorphism 
\begin{equation}
\label{eq:base-change}
\Gamma_c((p^* \cF) \otimes \omega_{M_{\bR}}) \cong (\cF \star p_! \omega_{M_{\bR}})\vert_0
\end{equation}
and computing the deck group action.

The left half of the diagram below shows a Cartesian square of continuous maps between locally compact spaces.  The base-change isomorphism associated to this diagram is \eqref{eq:base-change}, indicated in the right-half of the diagram.
\[
\begin{tikzpicture}
[thick,>=\arrtip,every text node part/.style={align=center}]
\node[matrix] (left) at (0,0) {
	\node (a) at (0,0) {$M_\bR$};
	\node (b) at (4,0) {$T^n \times M_\bR$};
	\node (c) at (0,-3) {$\{0\}$};
	\node (e) at (4,-1.5) {$T^n \times T^n$};
	\node (d) at (4,-3) {$T^n$};
	\draw[->] (a) to node[above] {$t \mapsto (p(t), -t)$} (b);
	\draw[->] (b) to node[right] {$id \times p$} (e); \draw[->] (e) to node[right] {$m$} (d);
	\draw[->] (a) to (c); \draw[->] (c) to (d);\\};
\node[matrix] (right) at (7,-.3) {
	\node (a) at (0,0) {$p^*\cF \otimes \omega_{M_{\bR}}$};
	\node (b) at (4,0) {$\cF \boxtimes \omega_{M_\bR}$};
	\node (c) at (0,-3) {$\Gamma_c(p^*\cF \otimes \omega_{M_{\bR}})$ \\ $\cong (\cF \star p_! \omega_{M_{\bR}})|_0$};
	\node (e) at (4,-1.5) {$\cF \boxtimes p_! \omega_{M_\bR}$.};
	\node (d) at (4,-3) {$\cF \star p_! \omega_{M_{\bR}}$.};
	\draw[|->] (b) to (a); \draw[|->] (b) to (e); \draw[|->] (e) to (d);
	\draw[|->] (a) to (c); \draw[|->] (d) to (c);\\};
\end{tikzpicture}
\]

For $m \in M$, let $\tau_m:\M_{\bR} \to M_{\bR}:t \mapsto t+m$ denote the translation by $m$.  Since $p \circ \tau_m = p$ we have a natural isomorphism $\tau_m^* p^* \cF \cong p^* \cF$, and an adjoint isomorphism
$p^* \cF \cong \tau_{m,*} p^* \cF = \tau_{m,!} p^* \cF$.  We also have an isomorphism
$\omega_{M_{\bR}} \stackrel{\sim}{\to} \tau_{m,!} \omega_{M_{\bR}}$.
 natural isomorphism $\tau_m^! \omega_{M_{\bR}} \cong \omega_{M_\bR}$ and an adjoint isomorphism $\omega_{M_{\bR}} \xrightarrow{\sim} \tau_{m,!} \omega_{M_{\bR}}$, giving the equivariant structure on $\omega_{M_{\bR}}$.  Together these define a map
\begin{equation}
\label{eq:deck-act}
\Gamma_c(p^* \cF \otimes \omega_{M_{\bR}}) \to \Gamma_c(\tau_{m,!} (p^* \cF \otimes M_{\bR})) = \Gamma_c(p^* \cF \otimes \omega_{M_{\bR}})
\end{equation}
(using $\Gamma_c \circ \tau_{m,!} = \Gamma_c$) which give the deck action on $\Gamma_c(p^* \cF \otimes \omega_{M_{\bR}})$.  Since $(p(t+m),-(t+m)) = (p(t),-t-m)$, the top map in the diagram intertwines $\tau_m$ with $\mathrm{id} \times \tau_{-m}$, and 
$
p^* \cF \otimes \omega_{M_{\bR}} \to \tau_{m,!}( p^* \cF\otimes \omega_{M_{\bR}})
$
is pulled back from 
\[
\cF \boxtimes \omega_{M_{\bR}} \xrightarrow{\sim} (\mathrm{id} \times \tau_{-m})_! (\cF \boxtimes \omega_{M_{\bR}}).
\]
Writing $\tau_{-m}:p_! \omega_{M_{\bR}} \to p_! \omega_{M_{\bR}}$ for the image of $\omega_{M_{\bR}} \to \tau_{-m,!} \omega_{M_{\bR}}$ under $p_!$, (making use of $p_! \tau_{-m,!} = p_!$)
it follows that \eqref{eq:deck-act} coincides with 
\[
(\mathrm{id} \star \tau_{-m})\vert_0:(\cF \star p_! \omega_{M_{\bR}})\vert_0 \to (\cF \star p_! \omega_{M_{\bR}})\vert_0
\]
under the base-change isomorphism.
\end{proof}

\section{From fans to Legendrian links}\label{sec:Legendrians}
\label{sec:from-fans-to}

We now specialize to the case $M \cong \bZ^2$ and fix a complete stacky fan $\stackfan$. In Example~\ref{ex:surfaceCCC} we saw how to describe mirrors of skyscrapers sheaves and line bundles on $\cX_{\stackfan}$ in terms of the coherent-constructible correspondence. The remainder of the paper can be understood as explaining how the mirrors of coherent sheaves supported on hypersurfaces in $\cX_{\stackfan}$ may be described using dimer models. 

We first note that a generic hypersurface $C$ in the underlying variety $X_\Sigma$ will intersect the toric boundary $D := X_\Sigma \smallsetminus T_N$ only along the smooth part of $D$. In other words, $C$ will lie in the smooth, non-complete toric surface $X_{\Sigmao}$ defined by the subfan $\Sigmao$ of $\Sigma$ formed by its 1-dimensional cones (rays) together with the origin. Set-theoretically $X_{\Sigmao}$ is the union of $T_N$ and a copy of $\coeffs^\times$ for each ray of $\Sigma$. Letting $\stackfano = (\Sigmao, \whSigmao, \beta)$, the same is true of $\cX_{\stackfano}$, the difference between $\cX_{\stackfano}$ and $X_{\Sigmao}$ being that in the former the $\coeffs^\times$ attached to the ray $\sigma$ has the cyclic group $C_\sigma := (N \cap \mathrm{span}(\sigma))/(\beta(\widehat{N}) \cap \mathrm{span}(\sigma))$ as a stabilizer. 

The Legendrian $\Lambda_{\Sigmao} \subset T^\infty T^2$ is a link with a connected component for each ray of $\Sigma$ (recall again that we set $T^2 := M_\bR/M$). These project onto geodesics passing through the origin in $T^2$, and the singular Legendrian $\Lambda_\Sigma$ is the union of $\Lambda_{\Sigmao}$ with the fiber of $T^\infty T^2$ at the origin. The Legendrian $\Lambda_{\Sigmao}$ is contained in $\Lambda_{\stackfano}$, which is also a link and whose remaining components project to geodesics not passing through the origin. While the components of $\Lambda_{\Sigmao}$ are mutually nonisotopic, $\Lambda_{\stackfano}$ has $|C_\sigma|$ pairwise isotopic components associated to each $\sigma \in \Sigma$. 

An important feature of microlocal sheaf theory is that the action of contact isotopies on the cosphere bundle of a manifold $M$ quantizes to an action on its sheaf category $\Sh(M)$ \cite{GKS}. By a contact isotopy we mean a family $\{\phi_t\}_{t \in I}$ of contactomorphisms $\phi_t: T^\infty M \to T^\infty M$ which assemble into a smooth map $T^\infty M \times I \to T^\infty M$, and such that $\phi_0$ is the identity (here $I := [0,1]$). On a practical level, this means that in studying sheaves microsupported on $\Lambda_{\stackfano}$ we have the freedom to isotope $\Lambda_{\stackfano}$ as we wish and instead study sheaves microsupported on the new Legendrian. More formally we have the following result, where $\dot{T}^*M \subset T^*M$ is the complement of the zero section. 

\begin{theorem}\label{thm:GKS} \cite{GKS}
	Let $\{\phi_t\}_{t \in I}$ be a contact isotopy of $T^\infty M$. Then there is a unique locally bounded sheaf $K_{\phi_I} \in \Sh^c(I \times M \times M)$ such that
	\begin{enumerate}
		\item the intersection of $\{t\} \times T^\infty M \times T^\infty M$ with the projection of $SS(K_\Phi) \cap I \times \dot{T}^*M \times \dot{T}^*M$ to $I \times T^\infty M \times T^\infty M$ is equal to the graph of $\phi_t$, 
		\item the restriction of $K_{\phi_I}$ to $\{0\} \times M \times M$ is the constant sheaf of the diagonal.
	\end{enumerate}
	Convolution with the restriction $K_{\phi_t}$ of $K_{\phi_I}$ to $\{t\} \times M \times M$ defines an autoequivalence of $\Sh(M)$ such that $SS(K_{\phi_t}(\cF)) = \phi_t(SS(\cF))$ for any $\cF \in \Sh(M)$ and $t \in I$. 
\end{theorem}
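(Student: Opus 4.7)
My plan is to reduce to the Guillermou--Kashiwara--Schapira construction of a sheaf quantization of a homogeneous Hamiltonian isotopy. First, I would lift the contact isotopy $\{\phi_t\}$ to a one-homogeneous Hamiltonian isotopy $\{\Phi_t\}$ on $T^*M \smallsetminus T^*_M M$; such a lift is determined uniquely by $\phi_t$ together with the $\bR_{>0}$-action, and its time-dependent generating Hamiltonian $H_t$ is one-homogeneous in the fiber. The twisted graph
\[
\Lambda_{\Phi_I} \;=\; \bigl\{\,(t,-H_t(x,\xi);\; \Phi_t(x,\xi);\; x,-\xi)\,\bigr\} \;\subset\; T^*(I \times M \times M)
\]
is then a conic Lagrangian whose projection to $I \times T^\infty M \times T^\infty M$ is precisely the required graph; condition (1) can be rephrased as $SS(K_{\phi_I}) \subset \Lambda_{\Phi_I} \cup 0_{I\times M\times M}$.

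Next I would construct $K_{\phi_I}$ itself. The approach is to build it by time slicing: for small $\varepsilon$, the isotopy $\phi_t$ for $t\in[0,\varepsilon]$ is $C^0$-small, so there is an explicit candidate kernel supported in a neighborhood of the diagonal (built from open sets of the form $\{(t,x,y) : y \in U_t(x)\}$ whose boundaries are non-characteristic for $\Lambda_{\Phi_I}$). Using the microlocal cut-off lemma and the non-characteristic deformation lemma of Kashiwara--Schapira, one shows that any such candidate has the correct microsupport and is uniquely characterized up to isomorphism by its restriction at $t=0$. Covering $I$ by overlapping small intervals and convolving the successive kernels produces a global $K_{\phi_I}$; the uniqueness on each overlap assembles these into a single sheaf, and condition (2) is imposed at $t=0$ by construction.

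For uniqueness of $K_{\phi_I}$ I would argue as follows: given two solutions $K, K'$ of (1) and (2), the mapping cone $C$ also satisfies the microsupport estimate $SS(C) \subset \Lambda_{\Phi_I} \cup 0$ and has zero restriction to $\{0\}\times M\times M$. Since $\Lambda_{\Phi_I}$ is transverse (actually disjoint outside the zero section) to the hypersurfaces $\{t = t_0\}$ in the appropriate sense, the microlocal Cauchy problem / propagation theorem forces $C|_{\{t_0\}\times M\times M} \simeq 0$ for all $t_0 \in I$, and then a further propagation argument gives $C \simeq 0$ globally.

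Finally, to prove that convolution with $K_{\phi_t}$ is an autoequivalence with the stated microsupport property, I would apply the same construction to the reverse isotopy $\{\phi_t^{-1}\}$ to obtain a kernel $K'_{\phi_t}$; the convolutions $K_{\phi_t}\circ K'_{\phi_t}$ and $K'_{\phi_t}\circ K_{\phi_t}$ each satisfy the defining properties of the identity kernel along the constant isotopy, so by the uniqueness in the previous step they are both isomorphic to the constant sheaf of the diagonal. The microsupport identity $SS(K_{\phi_t}\cF) = \phi_t(SS(\cF))$ then follows from the Kashiwara--Schapira microsupport estimate for composition of kernels applied to the Lagrangian correspondence $\Lambda_{\Phi_t}$, which acts on $T^*M$ precisely by $\Phi_t$. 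The main technical obstacle is the existence/construction step: verifying that the iterated time-slicing procedure produces a genuine sheaf on $I \times M \times M$ with microsupport in the prescribed conic Lagrangian requires careful use of the non-characteristic propagation results and a concrete model for the short-time kernels.
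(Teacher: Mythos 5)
This theorem is not proved in the paper: it is quoted from \cite{GKS} (just as Corollary \ref{cor:LegendrianGKS} quotes [GKS, Prop.~3.12]), so the only comparison available is with the original Guillermou--Kashiwara--Schapira argument, and your sketch is essentially that argument --- lift $\{\phi_t\}$ to a homogeneous Hamiltonian isotopy of $\dot{T}^*M$, recast condition (1) as the microsupport bound $SS(K_{\phi_I}) \subset \Lambda_{\Phi_I} \cup 0_{I \times M \times M}$ for the twisted graph, prove uniqueness by propagation in the $I$-direction, construct the kernel for short time and glue using convolution plus uniqueness, and deduce invertibility and $SS(K_{\phi_t}(\cF)) = \phi_t(SS(\cF))$ from the kernel of the reverse isotopy together with the composition estimate for microsupports.

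Two steps of your sketch need repair. In the uniqueness step there is no given morphism $K \to K'$ whose cone you can form; the correct mechanism is that restriction at $t=0$ is fully faithful (in fact an equivalence) on the subcategory of sheaves microsupported in $\Lambda_{\Phi_I} \cup 0$, which is exactly [GKS, Prop.~3.12] (the paper's Corollary \ref{cor:LegendrianGKS}); the isomorphism $K \cong K'$ is then produced directly from $K|_{t=0} \cong \coeffs_\Delta \cong K'|_{t=0}$, not from a cone. More seriously, the short-time existence step --- which you rightly flag as the technical heart --- cannot be carried out with kernels that are constant sheaves on open thickenings $\{(t,x,y): y \in U_t(x)\}$ of the diagonal: whether the moving front contributes an ``open'' or ``closed'' boundary to the candidate region is dictated by the co-orientation, i.e.\ by the sign of the homogeneous Hamiltonian, and for a general small contact isotopy this sign varies over the cosphere, so no single such model has microsupport in $\Lambda_{\Phi_I} \cup 0$; the construction in \cite{GKS} deals with this (roughly, by reducing to Hamiltonians of fixed sign via composition with an explicitly quantized positive flow) and this is where the real work lies. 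Two minor points: the $I$-component of the covector in the twisted graph is $-H_t(\Phi_t(x,\xi))$, evaluated at the moving point; and uniqueness only uses the inclusion of $SS(K_{\phi_I})$ in $\Lambda_{\Phi_I} \cup 0$, the equality asserted in condition (1) being recovered at the end from invertibility, as your final step implicitly does.
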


It follows that for any Legendrian $\Lambda \subset T^\infty M$ convolution with $K_{\phi_t}$ restricts to an equivalence $\Sh_{\Lambda}(M) \congto \Sh_{\phi_t(\Lambda)}(M)$. In fact, by \cite[Prop. 3.12]{GKS} this restriction only depends on the Legendrian isotopy $\{\phi_t(\Lambda)\}_{t \in I}$ and can be alternatively described as follows, where $M_t := M \times \{t\} \subset M \times I$ (see also \cite[Theorem 3.1]{Zho18}, \cite[Sec. 2.11]{JT17}). Here given a smooth family $\{\Lambda_t\}_{t \in I}$ of Legendrians in $T^\infty M$ we write $\Lambda_I \subset T^\infty (M \times I)$ for the unique Legendrian whose projection to $(T^\infty M) \times I$ is the given family (see e.g. \cite[Sec. A.2]{GKS}). 

\begin{corollary}\label{cor:LegendrianGKS}
	\cite{GKS} Let $\{\Lambda_t\}_{t \in I}$ be a Legendrian isotopy in $T^\infty M$. Then for any $t \in I$ the restriction functor $i_{M_t}^*: \Sh_{\Lambda_I} (M\times I) \to \Sh_{\Lambda_t}(M)$ is an equivalence. The composition $i_{M_t}^* \circ (i_{M_0}^*)^{-1}: \Sh_{\Lambda_0}(M) \congto \Sh_{\Lambda_t}(M)$ is isomorphic to convolution with the GKS kernel $K_{\phi_t}$ for any contact isotopy $\{\phi_t\}_{t \in I}$ such that $\Lambda_t = \phi_t(\Lambda_0)$ for all $t \in I$.
\end{corollary}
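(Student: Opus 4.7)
The plan is to produce an explicit inverse to the restriction functor built directly from the GKS kernel, which will simultaneously establish the identification with kernel convolution. Fix a contact isotopy $\{\phi_t\}_{t \in I}$ realizing the given Legendrian isotopy, with kernel $K_{\phi_I} \in \Sh^c(I \times M \times M)$ produced by Theorem \ref{thm:GKS}. Viewing $K_{\phi_I}$ as a kernel from $M$ to $M \times I$, let $E \colon \Sh(M) \to \Sh(M \times I)$ denote the corresponding convolution functor, so that by construction $i_{M_t}^* \circ E$ is isomorphic to convolution with the slice kernel $K_{\phi_t}$.

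First I would verify that $E$ restricts to a functor $\Sh_{\Lambda_0}(M) \to \Sh_{\Lambda_I}(M \times I)$. The microsupport clause in Theorem \ref{thm:GKS} identifies the cosphere projection of $SS(K_{\phi_I}) \cap (I \times \dot{T}^*M \times \dot{T}^*M)$ with the graph of $\phi_t$ over each $t \in I$. Feeding this into the standard noncharacteristic bound for the microsupport of a kernel composition yields a control on $SS(E(\cF))$ whose fiberwise spherical part lies in $\Lambda_I$. Moreover, at each slice the composition $i_{M_t}^* E(\cF)$ agrees with $K_{\phi_t} \circ \cF$, which has microsupport contained in $\phi_t(SS(\cF)) \subset \Lambda_t$ by the action statement in Theorem \ref{thm:GKS}; at $t=0$ this is $\cF$ itself, since $K_{\phi_0}$ is the constant sheaf on the diagonal. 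Thus $i_{M_0}^* \circ E \cong \op{id}$ and, for any $t$, the composition $i_{M_t}^* \circ E$ is convolution with $K_{\phi_t}$.

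The remaining and only substantive obstacle is the reverse composition $E \circ i_{M_0}^* \cong \op{id}$, i.e.\ showing that any $\cG \in \Sh_{\Lambda_I}(M \times I)$ is reconstructed from its zero-slice via the GKS flow. The defining property of $\Lambda_I$ as the Legendrian lift of the family $\{\Lambda_t\}$ forbids nonzero purely $dt$-conormal directions from appearing along the spherical part of $SS(\cG)$; the Kashiwara--Schapira noncharacteristic deformation lemma then ensures that $\cG$ is determined by its restriction to any one slice $M_t$. To match the abstract reconstruction with the explicit candidate $E(i_{M_0}^* \cG)$, one applies the uniqueness clause of Theorem \ref{thm:GKS} to the pair $(\cG, E(i_{M_0}^* \cG))$, both of which have the prescribed microsupport along $\Lambda_I$ and the same restriction to $\{0\} \times M$; this is the content of \cite[Prop. 3.12]{GKS}, which we invoke here. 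With both restriction functors now equivalences and $E$ identified as $(i_{M_0}^*)^{-1}$, the final statement $i_{M_t}^* \circ (i_{M_0}^*)^{-1} \cong K_{\phi_t} \star (-)$ is immediate from the first step.
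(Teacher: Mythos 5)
Your proposal is essentially sound, and it is worth noting that the paper itself offers no independent argument for Corollary \ref{cor:LegendrianGKS}: it is quoted from \cite{GKS}, with the equivalence statement attributed directly to \cite[Prop.~3.12]{GKS}. Your added content --- viewing $K_{\phi_I}$ as a kernel from $M$ to $M\times I$, checking via the microsupport estimate for kernel composition that the resulting functor $E$ lands in $\Sh_{\Lambda_I}(M\times I)$, and using base change to get $i_{M_0}^*\circ E\cong \mathrm{id}$ and $i_{M_t}^*\circ E\cong K_{\phi_t}\star(-)$ --- is a correct way to make the kernel identification explicit, and it matches how this is done in \cite{GKS} and the other references the paper points to. One inaccuracy: the ``uniqueness clause'' of Theorem \ref{thm:GKS} does not apply to the pair $(\cG, E(i_{M_0}^*\cG))$, since that clause concerns locally bounded kernels on $I\times M\times M$ whose microsupport is the graph movie of the contact isotopy, not arbitrary sheaves on $M\times I$ microsupported in $\Lambda_I$; and the noncharacteristic deformation lemma by itself does not yield that such a sheaf is determined by one slice. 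What rescues the step is exactly the citation you then make to \cite[Prop.~3.12]{GKS}, which already asserts that slice restriction is an equivalence --- so your argument is not an independent proof of that part but a reduction to the same external result the paper relies on, which is legitimate here since the corollary is itself a cited statement. Granting that citation, a slightly cleaner conclusion is available: since $i_{M_0}^*$ is an equivalence and $i_{M_0}^*\circ E\cong\mathrm{id}$ naturally, one gets $E\cong (i_{M_0}^*)^{-1}$ as functors, and then $i_{M_t}^*\circ(i_{M_0}^*)^{-1}\cong K_{\phi_t}\star(-)$ follows at once, avoiding the object-by-object comparison.
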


With this Corollary in hand we will write $K_{\{\Lambda_t\}}: \Sh_{\Lambda_0}(M) \congto \Sh_{\Lambda_1}(M)$ for the equivalence associated to a Legendrian isotopy $\{\Lambda_t\}_{t \in I}$. Following the discussion preceding Theorem \ref{thm:GKS}, we would now like to characterize Legendrian links in $T^\infty T^2$ which are Legendrian isotopic to some $\Lambda_{\stackfano}$. If we restrict our attention to isotopies which do not create cusps in the front projection, we will see in Proposition \ref{prop:isotopies} that the resulting links are exactly those which satisfy the following condition. 

\begin{definition}\label{def:consistent}
	A Legendrian link $\La \subset T^\infty T^2$ is \newword{consistent} if 
	\begin{itemize}
		\item its projection $\La \to T^2$ is an immersion (in particular, its image has no cusps),
		\item no component has a homotopically trivial projection,
		\item no component has a projection whose lift to the universal cover of $T^2$ has a self-intersection,
		\item no pair of components have projections whose lifts to the universal cover bound a parallel bigon.
	\end{itemize}
\end{definition}

Here and elsewhere we say a bigon formed by two strands of the front projection of a Legendrian is anti-parallel (below left) if the induced co-orientations of the edges are both inward or both outward, and is parallel (below right) otherwise. We allow bigons which have other strands (or different parts of the same strands) meeting their interior, i.e. the pictures below may be embedded into a larger immersed curve but would still be referred to as bigons.

\[
\begin{tikzpicture}
\node[matrix] (a) at (0,0) {
	\draw[lefthairs, thick, domain=60:300 ] plot (\x/120,{.5*cos(\x)});
	\draw[righthairs, thick, domain=60:300 ] plot (\x/120,{-.5*cos(\x)});\\};
\node[matrix] (b) at (5,0) {
	\draw[lefthairs, thick, domain=60:300 ] plot (\x/120,{.5*cos(\x)});
	\draw[lefthairs, thick, domain=60:300 ] plot (\x/120,{-.5*cos(\x)});\\};
\node (atxt) at ($(a.south)+(0,-.3)$) {anti-parallel bigon};
\node (atxt) at ($(b.south)+(0,-.3)$) {parallel bigon};
\end{tikzpicture}
\]

As discussed in Section \ref{sec:bipartite}, the term \textit{consistent} is adapted from the literature on dimer models. It is clear $\Lambda_{\stackfano}$ is consistent, and that any Legendrian isotopy which preserves the first condition in Definition \ref{def:consistent} preserves the rest. To argue that all consistent Legendrians are isotopic to ones associated to stacky fans we will apply the following useful property, which ensures that isotopies of individual components of a consistent Legendrian extend to well-behaved isotopies of the entire link. Recall that we write $\pi$ for the projection $T^\infty T^2 \to T^2$.

\begin{lemma}\label{lem:bigonremoval}
	Let $\La \subset T^\infty T^2$ be a consistent Legendrian link, and $\La_i$ a component of $\La$. Let $\La'_i \subset T^\infty T^2$ be a Legendrian such that $\ga := \La_i \smallsetminus (\La_i \cap \La'_i)$ and $\ga' := \La'_i \smallsetminus (\La_i \cap \La'_i)$ are intervals whose projections form the boundary of an embedded bigon $B \subset S$, and such that the projections of $\ga'$ and $\La$ intersect at finitely many points. Then $\La'_i$ extends to a consistent Legendrian $\La' \subset T^\infty T^2$ which is the result of an isotopy $\La \to \La'$ that takes $\La_i$ onto $\La'_i$, is stationary outside a neighborhood of $B$, and moves $\La$ minimally in the following sense: any point of $\La$ whose projection lies on $\pi(\ga')$ is held fixed unless it is an endpoint of an interval $\ga'' \subset \La$ whose projection lies in $B$ and forms a parallel bigon with a subinterval of $\pi(\ga')$.
\end{lemma}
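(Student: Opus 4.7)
The plan is to construct the isotopy in two stages: first an ambient contact isotopy extending the prescribed move on $\La_i$, and then finitely many local corrections to eliminate any parallel bigons introduced in the process. For the first stage, choose a tubular neighborhood $U$ of $B$ in $T^2$ with $\pa U$ transverse to $\pi(\La)$. Because $B$ is an embedded disk bounded by $\pi(\ga) \cup \pi(\ga')$, the Legendrian arc-isotopy $\La_i \to \La'_i$ extends to an ambient contact isotopy $\{\phi_t\}$ of $T^\infty T^2$ supported in $\pi^{-1}(U)$. Set $\La^\circ := \phi_1(\La)$, which contains $\La'_i$ and coincides with $\La$ outside $\pi^{-1}(U)$.

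Next I verify the four conditions of Definition~\ref{def:consistent} for $\La^\circ$, modulo local corrections. The immersion, homotopic nontriviality, and no-self-intersection-in-lift conditions are all preserved by $\phi_1$: contact isotopies preserve the property of being an immersion and the homotopy class of each component, and distinct lifts of $U$ to the universal cover $M_\bR$ are disjoint, so self-intersections within a single lifted component cannot arise from ones not already present in $\La$. The only subtle condition is the absence of parallel bigons, which can newly appear only between $\pi(\ga')$ and the image under $\phi_1$ of a strand $\ga'' \subset \La$ with $\pi(\ga'') \subset B$. I classify such $\ga''$ by where its endpoints lie on $\pa B = \pi(\ga) \cup \pi(\ga')$. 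If both endpoints lie on $\pi(\ga)$, then consistency of $\La$ already forbids a parallel bigon of $\pi(\ga'')$ with $\pi(\ga)$ inside $B$, and antiparallelity survives the sweep. If one endpoint lies on each of $\pi(\ga)$ and $\pi(\ga')$, the image produces only a transverse crossing with $\pi(\ga')$, not a bigon. The only problematic case is when both endpoints lie on $\pi(\ga')$ and $\pi(\ga'')$ forms a parallel bigon with a subinterval of $\pi(\ga')$; for each such strand, apply an additional local contact isotopy supported near $\ga''$ that pushes $\phi_1(\ga'')$ just past $\pi(\ga')$, exchanging the parallel bigon for two transverse crossings. The finite-intersection hypothesis ensures that there are finitely many such corrections, which can be taken with mutually disjoint supports.

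The main obstacle is arranging all of these isotopies so as to satisfy the minimality clause: every point of $\La$ projecting to $\pi(\ga')$ must be held fixed outside exactly those endpoints enumerated above. This is achieved by constructing $\{\phi_t\}$ to restrict to the identity on $\pi(\ga')$ away from small neighborhoods of the problematic endpoints (possible since these are finitely many and isolated on $\pi(\ga')$), and by supporting each corrective finger move in a small disk that meets $\pi(\ga')$ only in such a neighborhood. The composite isotopy then carries $\La$ to a consistent Legendrian $\La'$ with the required properties.
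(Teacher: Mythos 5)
There is a genuine gap, and it lies in your corrective second stage. Once $\La'_i$ is already in place over $\pi(\ga')$, a strand $\phi_1(\ga'')$ forming a \emph{parallel} bigon with it cannot be ``pushed just past $\pi(\ga')$'' by any contact isotopy that leaves $\La'_i$ where it is: cancelling the two corner crossings of that bigon requires a moment of tangency between the front of $\ga''$ and the front of $\La'_i$, and for a parallel bigon the co-orientations at that tangency agree, so the two Legendrian strands would have to pass through the same point of $T^\infty T^2$. (This is exactly why parallel bigons, unlike anti-parallel ones, are not removable, and why they are the obstruction encoded in Definition \ref{def:consistent}; note also that the bigon's corners already are transverse crossings, so the move you describe would cancel crossings, not create them.) The order of operations is therefore essential: the strands that form parallel bigons with $\pi(\ga')$ must be swept across the curve $\pi(\ga')$ \emph{while it is still unoccupied} --- pushed ahead of the moving front --- which is what the paper's induction on embedded bigons arranges; after your stage 1 has parked $\La'_i$ there, it is too late, and pushing $\ga''$ the other way out of $B$ does not help since the parallel bigon with $\La'_i$ merely grows.

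Your first stage is also not justified as stated. The Legendrian isotopy extension theorem produces \emph{some} ambient contact isotopy extending the sweep of $\La_i$, but it gives no control over what happens to the other components: it is false that contact isotopies preserve immersed (cuspless) fronts, and, more to the point, nothing guarantees that points of $\La$ over $\pi(\ga')$ are held fixed, which is the entire content of the minimality clause. Repairing this requires constructing the isotopy explicitly at the level of fronts and checking that every tangency encountered is anti-parallel (hence safe), including for nested configurations --- strands crossing $B$ several times, bigons among strands of $\La$ inside $B$ --- which your three-case classification does not cover; this is precisely what the paper's induction on the number of embedded bigons in $B$ accomplishes.
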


\begin{proof}
	We induct on the number of embedded bigons in $B$ with boundary formed by the projections of two intervals in $\La \cup \ga'$. By perturbing $\La$ we may assume that $B \smallsetminus \pi(\La)$ has finitely many components. An embedded bigon is a union of such components, so in particular there are finitely many.
	
	If there are no embedded bigons in $B$ other than $B$ itself, then the intersection of $\pi(\La)$ with $B$ consists of a collection of embedded arcs connecting its two edges, any two arcs intersecting at most once. Any isotopy $\pi(\La_i) \to \pi(\La'_i)$ that creates no tangencies to these arcs and is stationary on $\pi(\La_i \smallsetminus \ga)$ lifts to a Legendrian isotopy $\La_i \to \La'_i$ which avoids the other components of $\La$, hence extends to an isotopy $\La \to \La' := (\La \smallsetminus \ga) \cup \ga'$ whose result is consistent if $\La$ is.
	
	If on the other hand there are bigons properly contained in $B$, choose a bigon $B'$ which is minimal in the sense that no other bigons are properly contained in it. If one edge of $B'$ is a subinterval of $\pi(\ga')$, then similarly to the previous paragraph we can isotope the other edge to lie just outside of $B$ without creating tangencies to the rest of $\pi(\La)$. The Legendrian lift of this again extends to an isotopy of $\La$ whose result is consistent and whose projection has strictly fewer bigons contained in $B$.
	
	If both edges of $B'$ lie along $\pi(\La)$ then again there is an isotopy that carries one edge across $B'$ to lie just outside the other edge, and which creates no tangencies among strands of $\pi(\La)$ crossing the interior of $B'$. A tangency is created between the two edges, but since $\La$ is consistent the co-orientations of these edges are opposite. Thus this again lifts to a Legendrian isotopy that decreases the number of bigons and preserves consistency.
\end{proof}

\begin{proposition}\label{prop:isotopies}
	If a Legendrian link $\Lambda \subset T^\infty T^2$ is consistent then it can be Legendrian isotoped so that its front projection is a union of geodesics. In particular, it is isotopic to a Legendrian of the form $\Lambda_{\stackfano}$ for a unique complete stacky fan $\stackfan$.
\end{proposition}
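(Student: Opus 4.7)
The plan is to use Lemma \ref{lem:bigonremoval} iteratively to straighten each component of $\Lambda$ to a geodesic while preserving consistency at every stage, and then to read off stacky fan data from the resulting geodesic configuration.

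First I analyze a single component $\Lambda_i$. By the second and third conditions of Definition \ref{def:consistent}, the front projection $\pi(\Lambda_i)$ is a non-nullhomotopic immersed circle whose lift $\tilde\gamma_i$ to the universal cover $\bR^2$ is properly embedded and invariant under translation by its primitive homology class $(a_i,b_i) \in \bZ^2 \setminus \{0\}$. In the intermediate cylindrical cover $C_i := \bR^2/\bZ(a_i,b_i)$, the lift descends to an essential embedded circle, which by standard surface topology is ambient-isotopic to a core geodesic $\bar\ell_i$ via a compactly supported isotopy. This isotopy descends to an isotopy of $\pi(\Lambda_i)$ in $T^2$ taking it onto a geodesic.

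Next I realize this smooth isotopy as a finite sequence of bigon removals. In $C_i$, the curves $\tilde\gamma_i$ and $\bar\ell_i$ cobound finitely many disks; pick one that is innermost, so its boundary consists of an arc $\gamma \subset \Lambda_i$ and an arc along $\bar\ell_i$ sharing endpoints with $\gamma$. After a small transverse perturbation if necessary, this yields a Legendrian replacement arc $\gamma'$ satisfying the hypotheses of Lemma \ref{lem:bigonremoval}, so the lemma produces a new consistent Legendrian in which the intersection number $|\pi(\Lambda_i) \cap \bar\ell_i|$ has strictly decreased. Iterating finitely many times places $\Lambda_i$ on the geodesic $\bar\ell_i$. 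Processing each component in succession---with target geodesics chosen generically so as not to create new parallel bigons with already-straightened components---yields a consistent Legendrian $\Lambda'$ whose front is a union of geodesics. The main technical obstacle will be verifying that at each stage the chosen bigon indeed satisfies the hypotheses of Lemma \ref{lem:bigonremoval} (in particular that the replacement arc meets $\Lambda$ transversely in finitely many points), and that the ``moves $\Lambda$ minimally'' conclusion of the lemma ensures previously straightened components remain geodesic; for the latter I will use that two distinct parallel geodesics in $T^2$ never cobound a bigon, so the minimality clause leaves them fixed.

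Finally, from the straightened $\Lambda'$ I read off the stacky fan uniquely. Each cooriented geodesic component determines a primitive normal $-v \in N$ and hence a ray $\sigma := \bR_{\ge 0} v \in \Sigma$; the number $k_\sigma$ of parallel components sharing this ray specifies the index $[N \cap \mathrm{span}(\sigma) : N_\sigma]$ and so determines the cyclic stabilizer $C_\sigma$. These assignments reconstruct a stacky fan $\stackfan$ with $\Lambda_{\stackfano} \cong \Lambda'$, and the reconstruction is uniquely determined by the cooriented geodesic configuration.
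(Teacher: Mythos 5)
Your overall strategy is the paper's: straighten one component at a time using Lemma \ref{lem:bigonremoval} against a chosen target geodesic, keep previously straightened components fixed, and read off the stacky fan from the conormal directions and multiplicities of the resulting geodesics. But there is a genuine gap in the endgame of straightening a single component. Iterated bigon removal against $\bar\ell_i$ only decreases the number of transverse intersection points of $\pi(\Lambda_i)$ with $\bar\ell_i$; when that number reaches zero the component is merely \emph{disjoint} from the geodesic, not equal to it, and Lemma \ref{lem:bigonremoval} can no longer be invoked, since the two curves now cobound an annulus rather than an embedded bigon (the would-be arcs $\gamma,\gamma'$ are entire circles). So ``iterating finitely many times places $\Lambda_i$ on the geodesic'' does not follow, and the missing final sweep across the annulus passes through a region that may contain many strands of $\Lambda$ --- exactly the interactions the bigon machinery was controlling. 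The paper never reaches the disjoint configuration: when exactly two intersection points remain, the arc is pushed across so as to stay tangent to the geodesic at a single point, so that the two curves then bound one large bigon whose corners wrap around $T^2$, and a final application of Lemma \ref{lem:bigonremoval} lands $\Lambda_i$ exactly on $\Lambda'_i$. (If instead you mean the replacement arcs to lie exactly on $\bar\ell_i$, your complexity measure $|\pi(\Lambda_i)\cap\bar\ell_i|$ ceases to be finite after the first step and the finiteness/transversality hypotheses of the lemma must be re-examined; either way the termination step is absent.)

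Two smaller points. First, your claim that already-straightened components are left fixed is not delivered by the stated conclusion of Lemma \ref{lem:bigonremoval}: the minimality clause only protects points of $\Lambda$ lying over $\pi(\gamma')$, while strands crossing the interior of the bigon $B$ --- including geodesic components of any slope --- may be moved by the isotopy built in the lemma's proof, where nested bigons between two strands of $\Lambda\cup\gamma'$ are erased by pulling one edge across the other. The paper closes this by arguing inside that induction: at most one edge of such a bigon can be straight, so one may always choose to move the non-straight edge. Second, an innermost disk in the cylindrical cover need not project to an \emph{embedded} bigon in $T^2$ (consistency forbids self-intersections only of the universal-cover lift, so a component's projection can make excursions wrapping around the torus); the paper instead selects the bigon directly in $T^2$, using two adjacent crossings of opposite sign and the fact that the complement of the target geodesic is an annulus. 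Your final fan-reading step agrees with the paper's.
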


\begin{proof}
	It suffices by induction to show there is an isotopy of $\La$ which carries an arbitrary component $\La_i$ with non-geodesic projection to a Legendrian $\La'_i$ with geodesic projection, and which leaves stationary all components whose projections are already geodesic. Note that since the projection of $\La_i$ is a homotopically nontrivial simple closed curve, it is clear that there exists a Legendrian $\La'_i$ with geodesic projection which is Legendrian isotopic to $\La_i$ (through an isotopy that is allowed to pass through other components of $\Lambda$) --- let us fix such a $\La'_i$,  which without loss of generality we may choose so that $\pi(\La'_i) \cap \pi(\La)$ is finite and $\pi(\La'_i) \cap \pi(\La_i)$ is nonempty. We now build the desired isotopy of $\Lambda$ inductively by composing a sequence of isotopies, each of which decreases the number of intersection points of $\pi(\La_i)$ and $\pi(\La'_i)$, until they are in a position where Lemma \ref{lem:bigonremoval} can be used to align them. 
	
	
	Consider first that since $\pi(\La'_i) \cap \pi(\La_i)$ is nonempty there must exist an embedded bigon $B \subset T^2$ whose boundary is the union of the projections of intervals $\ga \subset \La_i$ and $\ga' \subset \La'_i$, and which contains no smaller bigons of this kind. Since the signs of the intersections of $\pi(\La_i)$ with $\pi(\La'_i)$ sum to zero, we can find two of opposite signs which are adjacent in the natural order along $\La_i$ and let $\ga$ be the interval between these. Since $T^2 \smallsetminus \La'_i$ is an annulus, one of the two components of $T^2 \smallsetminus (\La'_i \cup \ga)$ is a bigon $B$. As two bigons of this kind are either disjoint or one is properly contained in the other, we can choose $B$ to contain no other such bigons. 
	
	Suppose that $\pi(\La'_i) \cap \pi(\La_i)$ consists of more than two points. By Lemma \ref{lem:bigonremoval} there is an isotopy of $\La$ which is supported in a neighborhood of $B$ and which carries $\ga$ so that its projection lies just outside the opposite edge of $B$ (so to be precise, we apply Lemma \ref{lem:bigonremoval} so that the interval $\gamma$ in its statement is slightly larger than what we call $\gamma$ here, as we want the endpoints of our $\gamma$ to be pushed off of $\pi(\La'_i)$). Moreover, during the induction in the proof of the Lemma -- wherein smaller bigons are erased by pulling one of their edges across the other one -- we can always avoid moving components of $\Lambda$ with geodesic projection. This follows since at most one edge of a bigon can be straight, and we get to choose which edge to move at each step in the induction.
	This isotopy clearly decreases the number of intersection points of $\pi(\La_i)$ and $\pi(\La'_i)$ by two. 
	
	Suppose now that $\pi(\La'_i) \cap \pi(\La_i)$ consists of two points. We proceed as above, except we choose the initial isotopy of $\La$ to carry $\ga$ so that its projection lies just outside the opposite edge of $B$ except at a single point, where $\pi(\ga)$ is tangent to $\pi(\La'_i)$. But now $\pi(\La_i)$ and $\pi(\La'_i)$ bound a single large bigon whose two corners wrap around $T^2$ and touch at this point. A final application of Lemma \ref{lem:bigonremoval} as above lets us isotope $\La$ onto $\La'_i$ without disturbing any components with geodesic projection. 
	
	Finally, it is straightforward to see that a Legendrian $\Lambda$ with geodesic front projection is isotopic to a suitable $\Lambda_{\stackfano}$. First, let $\Sigma$ be the complete fan whose rays are those which define the conormal lifts of the components of $\Lambda$. Now let $\widehat{N} = \bigoplus_{\rho \in \Sigma(1)} \bZ e_\rho$ and let $\widehat{\Sigma}$ be the complete fan in $\widehat{N}$ whose rays are $\{\bR_{\geq 0} e_\rho\}$. Finally, let $\beta$ take $e_\rho$ to the $m_\rho$th multiple of the generator of $N \cap \rho$, where $m_\rho$ is the number of components of $\Lambda$ which are lifted from their front projection by the conormal direction $\rho$.
\end{proof}

\section{Legendrian links from bipartite graphs}\label{sec:bipartite}

We now explain how interesting isotopy representatives of a consistent Legendrian link $\Lambda_{\stackfano} \subset T^\infty T^2$ may be obtained systematically from bipartite graphs.

Let $\Gamma \subset T^2$ be an embedded bipartite graph with vertices colored black and white.
The \newword{zig-zag paths} of $\Gamma$ are a collection of immersed curves determined up to isotopy by the following conditions: they lie in an open set that retracts onto $\Gamma$, their crossings all lie on edges of $\Gamma$ with a unique crossing on each edge, and these crossings are the only points where the zig-zags meet $\Gamma$.
We label the components of the complement of the zig-zag paths as white, black, or null according to whether they contain a white vertex, a black vertex, or no vertices.

\begin{definition} \cite[\S 4]{STWZ}
	The \newword{alternating Legendrian} $\Lambda_\Gamma$ associated to $\Gamma$ is the Legendrian lift of its zig-zag paths, co-oriented so that the boundaries of black and white regions are co-oriented inward and outward, respectively. 
\end{definition}

Alternating Legendrians are distinguished representatives of their Legendrian isotopy class in that $\La_\Gamma$ has a canonical exact embedded Lagrangian filling $L_\Gamma \subset T^*T^2$ \cite{STWZ}. This Lagrangian deformation retracts onto $\Gamma$ and its image in $T^2$ is the union of the black and white regions. Sheaf quantization of $L_\Gamma$ yields a fully faithful functor $\Loc_1(L_\Gamma) \into \Sh_{\La_\Gamma}(T^2)$ from rank one local systems on $L_\Gamma$ to sheaves whose microsupport at infinity is contained in $\Lambda_\Gamma$. This functor can be described Floer theoretically via the equivalence $\Sh(T^2) \cong \Fuk_{inf} T^*T^2$ of \cite{NZ,Nad09} or sheaf theoretically as in \cite{JT17,Gui12}. We refer to objects in the image of $\Loc_1(L_\Gamma) \to \Sh_{\La_\Gamma}(T^2)$ as \newword{alternating sheaves}.

 A sheaf $\cA \in \Sh_{\Lambda_\Gamma}(T^2)$ is alternating if and only if it fits into a triangle
\begin{equation}\label{eq:altpres}
i_* \coeffs_B \to i_! \coeffs_W[2] \to \cA \to i_* \coeffs_B[1]
\end{equation}
where the left hand map, viewed as a section of $\sheafHom(i_* \coeffs_B, i_! \coeffs_W[2])$, has nonzero stalk at each zig-zag crossing. Here $\coeffs_B$ and $\coeffs_W$ are the constant sheaves on the union of the black and white regions of $T^2 \smallsetminus \pi(\La_\Gamma)$, respectively.

Note that there is a $\bZ_2^{|\Ga_1|}$-torsor of trivializations
\[
\sheafHom(i_* \coeffs_B, i_! \coeffs_W[2]) \cong \bigoplus_{\substack{\text{crossings} \\ p \,\in\, \pi(\La_\Ga)}} \coeffs_p
\]
of the Hom sheaf on the left which arise by base change from $\bZ$ to $\coeffs$. 
Here the right-hand side is the direct sum of skyscraper sheaves supported at the crossings of $\pi(\La_\Ga)$. A choice of such trivialization identifies isomorphism classes of alternating sheaves with $(\coeffs^\times)^{\Ga_1}/(\coeffs^\times)^{\Ga_0}$, hence with the torus $\Loc_1(\Gamma)$ of rank one local systems on $\Ga$. Since $L_\Gamma$ retracts onto $\Gamma$ we have $\Loc_1(\Gamma) \cong \Loc_1(L_\Gamma)$, hence a trivialization as above defines an embedding $\Loc_1(L_\Gamma) \into \Sh_{\Lambda_\Gamma}(T^2)$ whose essential image is the subcategory of alternating sheaves. 
In fact, there is a standard choice of such trivialization: the $\bZ_2^{|\Ga_1|}$-torsor above is canonically identified with a choice of component of $\Lambda_\Gamma$ above each crossing, and we can consistently choose the ``left'' component at each crossing \cite{STWZ} (in fact consistently choosing the ``right'' component results in the same embedding $\Loc_1(L_\Gamma) \into \Sh_{\Lambda_\Gamma}(T^2)$).

We say that the bipartite graph $\Gamma \subset T^2$ is \newword{consistent} if the Legendrian $\Lambda_\Gamma$ is. This restates \cite[Def. 3.5]{IU}, which in turn is one of several related formulations \cite{MR,Dav,Bro,HV07}, and indeed we have defined the notion of consistent Legendrian to make this so. If $\Gamma$ is consistent, then by Proposition \ref{prop:isotopies} there is a unique complete stacky fan $\stackfan$ such that $\Lambda_\Gamma$ and $\Lambda_{\stackfano}$ are Legendrian isotopic. 

Not all consistent Legendrians are isotopic to ones which arise from bipartite graphs. Those that do are characterized by the following property. First, we can use the orientation of $T^2$ to turn co-oriented curves into oriented curves --- we orient a co-oriented curve $\gamma$ so that the conormal hairs point \emph{right}. This allows us to define the homology class of a co-oriented curve. Since the zig-zag paths bound the union of the black and white regions their homology classes sum to zero. It follows from \cite[Theorem 2.5]{GK} and Proposition \ref{prop:isotopies} that any consistent Legendrian satisfying this condition is Legendrian isotopic to $\Lambda_\Gamma$ for some bipartite graph $\Gamma$. We also have the following notion --- see e.g. \cite{GK}. 

\begin{definition} 
	The \newword{Newton polygon} $P \subset M_\bR \cong H_1(T^2; \bR)$ of $\Gamma$ is the convex lattice polygon, unique up to translation, whose set of counterclockwise-oriented primitive edge vectors are exactly the homology classes of the set of zig-zag paths of $\Gamma$.
\end{definition}
 
 Note that the underlying fan $\Sigma \subset N_\bR$ associated to $\Lambda_{\stackfano} \cong \Lambda_{\Gamma}$ is the inward normal fan of $P$, and $\stackfan$ further records the length of each edge, measured in primitive vectors. 

\begin{example}\label{ex:P1graph}

\begin{figure}
\begin{tikzpicture}
\newcommand*{\edgelen}{4}; \newcommand*{\vertrad}{.12}; \newcommand*{\crad}{.05}
\newcommand*{\gspace}{1.1}; \newcommand*{\angdelta}{22.5};
\newcommand*{\scl}{1.5}; \newcommand*{\zx}{0}; \newcommand*{\ax}{4};
\newcommand*{\bx}{8}; \newcommand*{\cx}{12};
\node (z) [matrix] at (\zx,0) {
	\draw[rotate=180] (0,0)--(2*\scl,0)--(2*\scl,2*\scl)--(0,2*\scl)--(0,0);
	\draw[rotate=180] (.66*\scl,.66*\scl)--(1.33*\scl,1.33*\scl);
	\draw[rotate=180] (1.33*\scl,1.33*\scl)--(1*\scl,2*\scl);
	\draw[rotate=180] (1*\scl,0)--(.66*\scl,.66*\scl);
	\draw[rotate=180] (.66*\scl,.66*\scl)--(0,1*\scl);
	\draw[rotate=180] (1.33*\scl,1.33*\scl)--(2*\scl,1*\scl);
	\draw[rotate=180,black,fill=white] (.66*\scl,.66*\scl) circle (.1cm);
	\draw[rotate=180,black,fill] (1.33*\scl,1.33*\scl) circle (.1cm);
	\\};
\node (a) [matrix] at (\ax,0) {
	\draw[rotate=180] (0,0)--(2*\scl,0)--(2*\scl,2*\scl)--(0,2*\scl)--(0,0);
	\draw[rotate=180] (.66*\scl,.66*\scl)--(1.33*\scl,1.33*\scl);
	\draw[rotate=180] (1.33*\scl,1.33*\scl)--(1*\scl,2*\scl);
	\draw[rotate=180] (1*\scl,0)--(.66*\scl,.66*\scl);
	\draw[rotate=180] (.66*\scl,.66*\scl)--(0,1*\scl);
	\draw[rotate=180] (1.33*\scl,1.33*\scl)--(2*\scl,1*\scl);
	\draw[rotate=180,black,fill=white] (.66*\scl,.66*\scl) circle (.1cm);
	\draw[rotate=180,black,fill] (1.33*\scl,1.33*\scl) circle (.1cm);
	\draw[rotate=180,lefthairs,blue,thick] (.9*\scl,2*\scl)--(.9*\scl,0);
	\draw[rotate=180,lefthairs,blue,thick] (0,.9*\scl)--(2*\scl,.9*\scl);
	\draw[rotate=180,lefthairs,blue,thick] (2*\scl,1.1*\scl)--(1.1*\scl,2*\scl);
	\draw[rotate=180,lefthairs,blue,thick] (1.1*\scl,0)--(0,1.1*\scl);
	\\};
\node (b) [matrix] at (\bx,0) {
	\foreach \x in {0,...,2} \foreach \y in {0,...,2}
	{\fill (\x*\gspace,\y*\gspace) circle (\crad);}
	\draw[thick] (1*\gspace,1*\gspace) to (2*\gspace,1*\gspace) to (1*\gspace,2*\gspace) to (1*\gspace,1*\gspace);
	\\};
\node (c) [matrix] at (\cx,-.32) {
	\foreach \x in {1,...,3} \foreach \y in {1,...,3}
	{\fill (\x*\gspace,\y*\gspace) circle (\crad);}
	\foreach \x/\y in {3.4/2,2/3.4,.7/.7}
	{\draw[-stealth',thick] (2*\gspace,2*\gspace) to (\x*\gspace,\y*\gspace);}
	\foreach \x/\y in {3/2,2/3,1/1}
	{\fill[red] (\x*\gspace,\y*\gspace) circle (\crad*1.5);}
	\node (e1) at (1.5*\gspace,2.7*\gspace) {$\beta(e_1)$};
	\node (e2) at (3*\gspace,1.6*\gspace) {$\beta(e_2)$};
	\node (e3) at (.8*\gspace,1.5*\gspace) {$\beta(e_3)$};
	\\};

\newcommand*{\txtht}{-2.1};
\node (ztxt) at (\zx, \txtht) {$\Gamma \subset T^2$};
\node (atxt) at (\ax, \txtht) {$\Lambda_{\stackfano} \subset T^\infty T^2$};
\node (btxt) at (\bx, \txtht) {$P \subset M_\bR$};
\node (ctxt) at (\cx, \txtht) {$\Sigma \subset N_\bR$};
\end{tikzpicture}
\caption{The projection of the hexagonal lattice to a minimal fundamental domain.}\label{fig:P1graph}
\end{figure}
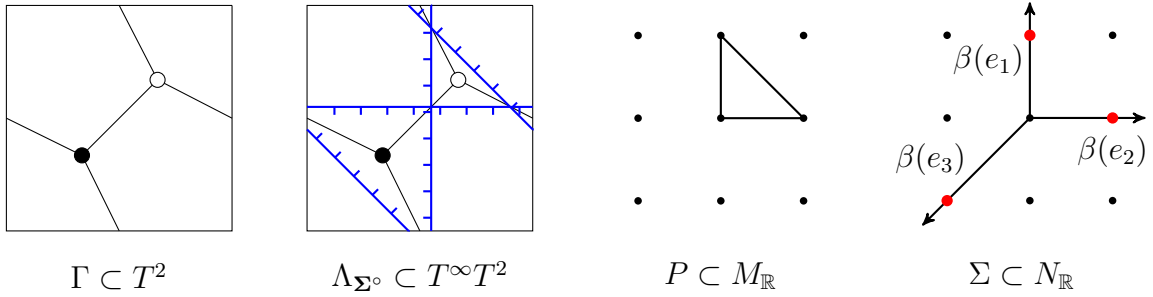

Let $\Gamma$ be the projection of the hexagonal lattice in $\bR^2$ to a minimal fundamental domain, as pictured at the left of Figure \ref{fig:P1graph}. The Legendrian $\Lambda_\Gamma$ has three components, the associated homology classes of which are $(1,0),$ $(0,-1)$ and  $(-1,1)$. The Newton polygon $P$ is a right triangle whose inward normal fan is the fan $\Sigma$ of $\bP^2$. Since the edges of $P$ are all primitive (equivalently, no two components of $\Lambda_\Gamma$ are isotopic), the associated stacky fan $\stackfan$ is trivial (i.e. $\cX_{\stackfan}$ is isomorphic to its coarse moduli space) and the $\beta(e_i)$ are just the primitive generators of the rays of $\Sigma$. We will see in Example \ref{ex:mainex} how, after contact isotopy, homological mirror symmetry matches constructible sheaves with
singular support contained in $\Lambda_\Gamma$ with coherent sheaves on $\cX_{\stackfan} \cong \bP^2$. 
\end{example}

\begin{figure}
\begin{tikzpicture}
\newcommand*{\edgelen}{4}; \newcommand*{\vertrad}{.12}; \newcommand*{\crad}{.05}
\newcommand*{\gspace}{1.1}; \newcommand*{\angdelta}{22.5};
\newcommand*{\scl}{1.5};
\newcommand*{\ax}{0}; \newcommand*{\bx}{5.3}; \newcommand*{\cx}{10.4};
\node (a) [matrix] at (\ax,0) {
	\draw[rotate=180] (0,0)--(4*\scl,0)--(4*\scl,2*\scl)--(0,2*\scl)--(0,0);
	\draw[rotate=180] (.66*\scl,.66*\scl)--(1.33*\scl,1.33*\scl);
	\draw[rotate=180] (1.33*\scl,1.33*\scl)--(1*\scl,2*\scl);
	\draw[rotate=180] (1*\scl,0)--(.66*\scl,.66*\scl);
	\draw[rotate=180] (.66*\scl,.66*\scl)--(0,1*\scl);
	\draw[rotate=180] (1.33*\scl,1.33*\scl)--(2*\scl,1*\scl);
	\draw[rotate=180] (2*\scl+.66*\scl,.66*\scl)--(2*\scl+1.33*\scl,1.33*\scl);
	\draw[rotate=180] (2*\scl+1.33*\scl,1.33*\scl)--(2*\scl+1*\scl,2*\scl);
	\draw[rotate=180] (2*\scl+1*\scl,0)--(2*\scl+.66*\scl,.66*\scl);
	\draw[rotate=180] (2*\scl+.66*\scl,.66*\scl)--(2*\scl+0,1*\scl);
	\draw[rotate=180] (2*\scl+1.33*\scl,1.33*\scl)--(2*\scl+2*\scl,1*\scl);
	\draw[rotate=180,black,fill=white] (.66*\scl,.66*\scl) circle (.1cm);
	\draw[rotate=180,black,fill] (1.33*\scl,1.33*\scl) circle (.1cm);
	\draw[rotate=180,black,fill=white] (2*\scl+.66*\scl,.66*\scl) circle (.1cm);
	\draw[rotate=180,black,fill] (2*\scl+1.33*\scl,1.33*\scl) circle (.1cm);
	\draw[rotate=180,lefthairs,blue,thick] (.9*\scl,2*\scl)--(.9*\scl,0);
	\draw[rotate=180,lefthairs,blue,thick] (0,.9*\scl)--(2*\scl,.9*\scl);
	\draw[rotate=180,lefthairs,blue,thick] (2*\scl,1.1*\scl)--(1.1*\scl,2*\scl);
	\draw[rotate=180,lefthairs,blue,thick] (1.1*\scl,0)--(0,1.1*\scl);
	\draw[rotate=180,lefthairs,blue,thick] (2*\scl+.9*\scl,2*\scl)--(2*\scl+.9*\scl,0);
	\draw[rotate=180,lefthairs,blue,thick] (2*\scl+0,.9*\scl)--(2*\scl+2*\scl,.9*\scl);
	\draw[rotate=180,lefthairs,blue,thick] (2*\scl+2*\scl,1.1*\scl)--(2*\scl+1.1*\scl,2*\scl);
	\draw[rotate=180,lefthairs,blue,thick] (2*\scl+1.1*\scl,0)--(2*\scl+0,1.1*\scl);
	\\};
\node (b) [matrix] at (\bx,0) {
	\foreach \x in {0,...,2} \foreach \y in {0,...,2}
	{\fill (\x*\gspace,\y*\gspace) circle (\crad);}
	\draw[thick] (1*\gspace,0*\gspace) to (2*\gspace,0*\gspace) to (1*\gspace,2*\gspace) to (1*\gspace,0*\gspace);
	\\};
\node (c) [matrix] at (\cx,-.32) {
	\foreach \x in {0,...,4} \foreach \y in {1,...,3}
	{\fill (\x*\gspace,\y*\gspace) circle (\crad);}
	\foreach \x/\y in {4.4/2,2/3.4,-.3/.85}
	{\draw[-stealth',thick] (2*\gspace,2*\gspace) to (\x*\gspace,\y*\gspace);}
	\foreach \x/\y in {4/2,2/3,0/1}
	{\fill[red] (\x*\gspace,\y*\gspace) circle (\crad*1.5);}
	\node (e1) at (1.5*\gspace,2.7*\gspace) {$\beta(e_1)$};
	\node (e2) at (4*\gspace,1.6*\gspace) {$\beta(e_2)$};
	\node (e3) at (-.1*\gspace,1.5*\gspace) {$\beta(e_3)$};
	\\};

\node (atxt) at (\ax, -2.1) {$\Lambda_{\stackfano} \subset T^\infty T^2$};
\node (btxt) at (\bx, -2.1) {$P \subset M_\bR$};
\node (ctxt) at (\cx, -2.1) {$\Sigma \subset N_\bR$};
\end{tikzpicture}
\caption{The projection of the hexagonal lattice to a double cover of a minimal fundamental domain.}\label{fig:dblcover}
\end{figure}
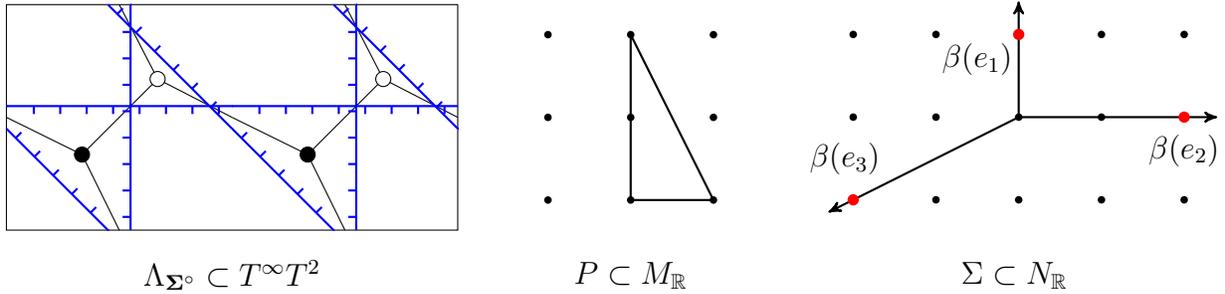

\begin{example}
	Let us also treat a double cover of the previous example: the projection of the hexagonal lattice to a fundamental domain which is twice as wide.
	The Legendrian $\Lambda_\Gamma$ now has four components whose homology classes are $(0,1), (0,1), (-1,0)$, and $(1,-2)$, see Figure \ref{fig:dblcover}. The inward normal fan $\Sigma$ of the new Newton polygon $P$ has three rays with generators $(1,0)$, $(0,1)$, and $(-2,1)$. The stacky fan $\stackfan$ is now nontrivial, with 
	$\beta$ mapping the three generators of $\widehat{N} \cong \bZ^3$ respectively onto the primitive vectors $(0,1)$, $(1,-2)$ and the nonprimitive vector $(2,0)$.
\end{example}

\section{The Kasteleyn operator and the mirror map}\label{sec:Kasteleyn}

If $\Gamma \subset T^2$ is a consistent bipartite graph we have seen that there is a unique complete stacky fan $\stackfan$ such that the alternating Legendrian $\Lambda_\Gamma$ is Legendrian isotopic to $\Lambda_{\stackfano}$. Following \cite{GKS} such an isotopy quantizes to an equivalence $\Sh^c_{\La_\Gamma}(T^2) \cong \Sh^c_{\Lambda_{\stackfano}}(T^2)$. The coherent-constructible correspondence provides a mirror description of the latter category as $\Perf_{prop} (\cX_{\stackfano})$. On the other hand, quantization of the conjugate Lagrangian $L_\Gamma$ as alternating sheaves yields an embedding $\Loc_1(L_\Gamma) \into \Sh^c_{\La_\Gamma}(T^2)$. In this section we show that the composition
$$ \Loc_1(L_\Gamma) \into \Sh^c_{\La_\Gamma}(T^2) \cong \Sh^c_{\Lambda_{\stackfano}}(T^2) \cong \Perf_{prop}(\cX_{\stackfano}) $$
is naturally described by the Kasteleyn operator, whose definition we first recall. 

Let $\cL \in \Loc_1(\Gamma)$ be a rank one local system and choose trivializations of the stalks of $\cL$ at the vertices of $\Ga$. We write $\cL(E)$ for the parallel transport across an edge $E$ from its black endpoint to its white endpoint. We also let $\Gamma_0^b, \Gamma_0^w \subset \Gamma_0$ denote the sets of black and white vertices, respectively. 

We choose generators $x, y$ of $M$, providing a trivialization $\coeffs M \cong \coeffs[x^{\pm1}, y^{\pm 1}]$. We also choose simple closed curves $\ga_x$, $\ga_y$ in $T^2$ representing the Poincar\'{e} duals of $x$ and $y$ with respect to some choice of orientation. We assume that $\ga_x$, $\ga_y$ avoid $p(0)$ and all vertices of $\Gamma$, and that each of them intersect any edge of $\Ga$ at most once. Given an edge $E$ of $\Gamma$, we define $\langle \ga_x, E \rangle$ to be $1$ (resp. $-1$, $0$) if $E$ crosses $\gamma_x$ positively (resp. negatively, not at all) when oriented from black to white ($\langle \ga_y, E \rangle$ is defined the same way). 

We will also need the notion of a Kasteleyn orientation $\kappa$ of $\Gamma$. This is a function $\kappa: \Gamma_1 \to \{ \pm 1\}$ such that the product of the values of $\kappa$ around a face of $\Gamma$ is $-1$ (resp. $1$) if the number of edges on its boundary is $0 \text{ mod } 4$ (resp. $2 \text{ mod } 4$).

\begin{definition}
Given a Kasteleyn orientation $\kappa$, the \newword{Kasteleyn operator} $K_\cL$ of $\cL$ is the $(\Gamma^w_0 \times \Gamma^b_0)$-matrix-valued Laurent polynomial whose $(v_b, v_w)$ entry is
\[
(K_\cL)_{(v_b, v_w)} = \sum_{\substack{\text{$E$ incident} \\ \text{to $v_b$, $v_w$}}} \cL(E)\kappa(E) x^{\langle \ga_x, E \rangle} y^{\langle \ga_y, E \rangle}.
\]
\end{definition}

Since the entries of $K_\cL$ are elements of $\coeffs M \cong \coeffs[T_N]$ we can regard it as a homomorphism
$$ K_\cL : \coeffs[T_N]^{\Ga_0^b} \to \coeffs[T_N]^{\Ga_0^w} $$
of free $\coeffs[T_N]$-modules. 
By the spectral transform of $K_\cL$ we mean its cokernel in $\coeffs[T_N]\mathrm{-mod}$. While the entries of $K_\cL$ depend on the gauge fixing and choice of $\ga_x$ and $\ga_y$, these ambiguities can be absorbed by automorphisms of $\coeffs[T_N]^{\Ga^b_0}$ and $\coeffs[T_N]^{\Ga^w_0}$ hence the spectral transform is independent of them. 

The spectral transform is a pure sheaf on $T_N$ of dimension one, i.e. it has no subsheaves with zero-dimensional support. It is supported on the spectral curve $C$, which is the vanishing locus of the determinant of $K_\cL$.  While $C$ need not be reduced or smooth in general, for generic $\cL$ it is and in this case the spectral transform of $K_\cL$ is the pushforward of a line bundle from $C$ to $T_N$.

Since the edges of $\Gamma$ are in bijection with the crossings of $\pi(\Lambda_\Gamma)$, we can view the Kasteleyn orientation $\kappa$ as the data of a trivialization
\begin{equation}\label{eq:triv}
\sheafHom(i_* \coeffs_B, i_! \coeffs_W[2]) \cong \bigoplus_{\substack{\text{crossings} \\ p \,\in\, \pi(\La_\Ga)}} \coeffs_p
\end{equation}
as in Section \ref{sec:bipartite} (recall that $B, W \subset T^2$ denote the unions of the black and white regions of $T^2 \smallsetminus \pi(\Lambda_\Gamma)$). We simply interpret the signs in $\kappa$ as twisting the standard trivialization by multiplication on the right-hand side of (\ref{eq:triv}). Thus $\kappa$ fixes a choice of signs in the embedding of $\Loc_1(L_\Gamma)$ into $\Sh^c_{\Lambda_\Gamma}(T^2)$ as alternating sheaves.

\begin{theorem}\label{thm:mainresult}
	Let $\Gamma \subset T^2$ be a consistent bipartite graph, $\stackfan$ the associated complete stacky fan, and $\{\Lambda_t\}_{t \in I}$ a Legendrian isotopy with $\Lambda_0 = \Lambda_\Gamma$ and $\Lambda_1 = \Lambda_{\stackfano}$. Then the following diagram commutes.
	\[
	\begin{tikzpicture}
	[thick,>=\arrtip,every text node part/.style={align=center}]
	\newcommand*{\xa}{3.2}; \newcommand*{\xb}{4.0}; \newcommand*{\xc}{3.2}; \newcommand*{\xd}{2.5}
	\newcommand*{\ya}{0}; \newcommand*{\yb}{1.8};
	\node[matrix] (left) at (0,0) {
		\node (a) at (0,0) {$\Sh^c_{\Lambda_\Gamma}(T^2)$};
		\node (b) at (\xa,0) {$\Sh^c_{\Lambda_{\stackfano}}(T^2)$};
		\node (c) at (\xa+\xb,0) {$\Perf_{prop}(\cX_{\stackfano})$};
		\node (d) at (\xa+\xb+\xc,0) {$\Perf_{prop}(T_N)$};
		\node (e) at (-\xd,-\ya-\yb) {$\Loc_1(\Gamma)$};
		\node (f) at (\xa+\xb+\xc,-\ya-\yb) {$\begin{Bmatrix}\text{pure sheaves of} \\ \text{dimension one}\end{Bmatrix}$};
		\node (g) at (-\xd,0) {$\Loc_1(L_\Gamma)$};
		\draw[->] (a) to node[above] {$K_{\{\Lambda_t\}}$} node[below] {$\sim$}(b); 
		\draw[->] (b) to node[above] {$CCC_{\cX_{\stackfano}}^{-1}$} node[below] {$\sim$}  (c); 
		\draw[->] (c) to node[above] {$i_{T_N}^*$} (d);
		\draw[right hook->] (g) to  (a); \draw[->] (e) to node[above, rotate=90] {$\sim$} (g); \draw[->] (e) to node[below] {spectral transform of $K_\cL$} (f); \draw[right hook->] (f) to (d);\\};
	\end{tikzpicture}
	\]
	Here the bottom and top left maps are defined by any fixed Kasteleyn orientation. 
\end{theorem}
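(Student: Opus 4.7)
The plan is to combine Lemma \ref{lem:openorbitmain} with a direct analysis of the alternating triangle \eqref{eq:altpres}. By the Lemma, the composition $i_{T_N}^* \circ CCC_{\cX_{\stackfano}}^{-1}$ applied to any $\cF \in \Sh^w_{\Lambda_{\stackfano}}(T^2)$ yields the $\coeffs[T_N]$-module $\Gamma_c(p^*\cF)$ equipped with the $\pi_1(T^2) \cong M$-action by deck transformations; equivalently, the stalk at $0$ of $\cF \star p_! \omega_{M_\bR}$. Applied to $\cF = K_{\{\Lambda_t\}}(\cA)$, this identifies the top row of the diagram with $\Gamma_c(p^* K_{\{\Lambda_t\}}(\cA))$, and the problem reduces to showing this module is independent of the isotopy and canonically isomorphic to the cokernel of $K_\cL$.

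For the isotopy invariance, write $K_{\{\Lambda_t\}}$ as convolution with a GKS kernel. Since $p_! \omega_{M_\bR}$ is a free local system on $T^2$ (microsupported on the zero section) and convolution is associative, we have $K_{\{\Lambda_t\}}(\cA) \star p_! \omega_{M_\bR} \cong K_{\{\Lambda_t\}}\!\bigl(\cA \star p_! \omega_{M_\bR}\bigr)$, and the right-hand side equals $\cA \star p_! \omega_{M_\bR}$ because the GKS kernel acts as the identity on local systems: its microsupport on the zero-section part of $T^*T^2 \times T^*T^2$ restricts to the diagonal, as the contact isotopy extends continuously from the identity at $t=0$. This reduces the problem to computing $\Gamma_c(p^* \cA)$ directly for the alternating sheaf $\cA$.

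To compute $\Gamma_c(p^* \cA)$, I apply $\Gamma_c(p^*(-))$ to the triangle \eqref{eq:altpres}. Because $\Gamma$ is consistent, each component of $W$ is an open topological disk containing one white vertex, so $p^{-1}(W)$ is a disjoint union of open disks indexed by lifts to $M_\bR$ of vertices in $\Gamma_0^w$; using $H^2_c$ of each open disk, $\Gamma_c(p^* i_! \coeffs_W[2])$ is the free $\coeffs[T_N]$-module on $\Gamma_0^w$ in degree $0$. Likewise $i_*\coeffs_B$ is the constant sheaf on the closure $\overline{B}$, a disjoint union of closed disks, so $\Gamma_c(p^* i_* \coeffs_B)$ is the free $\coeffs[T_N]$-module on $\Gamma_0^b$ in degree $0$.

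Finally, the connecting map in the triangle is determined by a section of $\sheafHom(i_* \coeffs_B, i_! \coeffs_W[2]) \cong \bigoplus_p \coeffs_p$ indexed by edges of $\Gamma$. Each edge $E$ from $v_b$ to $v_w$ contributes a single summand to the $(v_b, v_w)$-entry of the induced $\coeffs[T_N]$-linear map $\coeffs[T_N]^{\Gamma_0^b} \to \coeffs[T_N]^{\Gamma_0^w}$, namely $\kappa(E)\,\cL(E)\,x^{\langle \gamma_x, E\rangle} y^{\langle \gamma_y, E\rangle}$: the sign from the Kasteleyn trivialization, the holonomy from the alternating structure, and the monomial from the $M$-action relating the lifts of $v_b$ and $v_w$ in $M_\bR$ joined by a lift of $E$. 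Summing over edges between $v_b$ and $v_w$ reproduces the corresponding entry of $K_\cL$ exactly, and the cokernel is by definition the spectral transform. The main obstacle is the isotopy-invariance step, which requires a careful microlocal argument that convolution with a free local system trivializes the GKS autoequivalence; a secondary difficulty is the bookkeeping of signs in the final step, where the Kasteleyn orientation, the ``left component'' convention used for alternating sheaves, and the chosen orientations of $T^2$ and of $\gamma_x, \gamma_y$ must all be consistently matched.
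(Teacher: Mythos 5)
Your overall strategy is the paper's: reduce via Lemma \ref{lem:openorbitmain} to computing $\Gamma_c(p^*\cA)$ for the alternating sheaf $\cA$, then read the Kasteleyn operator off the triangle (\ref{eq:altpres}); the second half of your argument (the degree-zero computation of $\Gamma_c$ of the pullbacks of $i_*\coeffs_B$ and $i_!\coeffs_W[2]$, and the identification of the connecting map with $K_\cL$) matches the paper. But the step you yourself flag as the main obstacle is where your argument breaks. The isomorphism $K_{\{\Lambda_t\}}(\cA)\star p_!\omega_{M_\bR} \cong K_{\{\Lambda_t\}}(\cA\star p_!\omega_{M_\bR})$ does not follow from ``associativity of convolution.'' The GKS functor is composition with a kernel on $T^2\times T^2$, while $-\star p_!\omega_{M_\bR}$ is the transform with the translation-invariant kernel pulled back along $(x,y)\mapsto y-x$; associativity of kernel composition identifies the two bracketings of one composite, not the two opposite orders of composition, and the GKS kernels for the isotopies at hand are not translation-invariant. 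Indeed GKS functors do not commute with group convolution in general: convolution with a skyscraper at $a$ is translation by $a$, and the autoequivalence attached to a non-equivariant isotopy does not commute with translations. So the commutation you assert is essentially equivalent to the invariance statement (\ref{eq:invariance}) that has to be proved, and the appeal to associativity begs the question. (Your other claim, that GKS functors act trivially up to isomorphism on local systems, so $K_{\{\Lambda_t\}}(\cA\star p_!\omega_{M_\bR})\cong\cA\star p_!\omega_{M_\bR}$, is fine, since $\cA\star p_!\omega_{M_\bR}$ has microsupport in the zero section.)

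The paper closes this gap by working in the family rather than commuting the two functors: with $\cA_I=(i^*_{T^2_0})^{-1}(\cA)\in\Sh_{\Lambda_I}(T^2\times I)$ as in Corollary \ref{cor:LegendrianGKS}, one forms $p_!\omega_{M_\bR}\star\cA_I$ by convolution fiberwise over $I$ and applies the microsupport bound for proper pushforward \cite[Prop.~5.4.4]{KS94}: since $p_!\omega_{M_\bR}$ is a local system, any covector in $SS(p_!\omega_{M_\bR}\star\cA_I)$ has vanishing $T^*T^2$-component, and covectors of the form $(0,\tau)$ in the cone over $\Lambda_I$ force $\tau=0$; hence $p_!\omega_{M_\bR}\star\cA_I$ is locally constant in $t$, and restricting at $t=0$ and $t=1$ gives exactly (\ref{eq:invariance}). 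If you replace your associativity step with this (or an equivalent) microlocal argument, the rest of your proposal goes through. Two smaller points you should also record: the identity $\Gamma_c(p^*\cF)\cong(\cF\star p_!\omega_{M_\bR})|_0$ from the Lemma applies to $\cA$ even though $\cA$ is not microsupported in $\Lambda_{\stackfan}$ (the proof never uses that hypothesis), and the cone of $K_\cL$ is its cokernel because $K_\cL$ is injective.
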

\begin{proof}
	
Let $\cL \in \Loc_1(\Gamma)$ be a local system and $\cA \in \Sh^c_{\Lambda_\Gamma}(T^2)$ the associated alternating sheaf. We first claim that 
\begin{equation}\label{eq:invariance}
p_! \omega_{M_\bR} \star \cA \cong p_! \omega_{M_\bR} \star K_{\{\Lambda_t\}}(\cA).
\end{equation} Let $\cA_I$ denote the image of $\cA$ under the inverse GKS equivalence $(i_{T^2_0}^*)^{-1}: \Sh^c_{\Lambda_0}(T^2) \congto \Sh^c_{\Lambda_I}(T^2 \times I)$ of Corollary \ref{cor:LegendrianGKS}. It suffices to show that $p_! \omega_{M_\bR} \star \cA_I := (m \times id_I)_! (p_! \omega_{M_\bR} \boxtimes \cA_I) \in \Sh(T^2 \times I)$ is locally constant: by base change $i_{T^2_0}^* (p_! \omega_{M_\bR} \star \cA_I)  \cong (p_! \omega_{M_\bR} \star \cA)$ and $i_{T^2_1}^* (p_! \omega_{M_\bR} \star \cA_I)  \cong p_! \omega_{M_\bR} \star K_{\{\Lambda_t\}}(\cA)$, but if $p_! \omega_{M_\bR} \star \cA_I$ is locally free $i_{T^2_t}^* (p_! \omega_{M_\bR} \star \cA_I) \cong i_{T^2_{t'}}^* (p_! \omega_{M_\bR} \star \cA_I)$ for all $t, t'$.

We can bound the singular support of the proper pushforward $p_! \omega_{M_\bR} \star \cA_I$ using \cite[Prop. 5.4.4]{KS94}. In the case at hand it says that
\begin{equation*}
\begin{aligned}
SS(p_! \omega_{M_\bR} \star \cA_I) \subset &\{(p(m), n, t, \tau) \in T^*T^2 \times T^*I \text{ such that } \exists(p(m_1), n) \in SS(p_! \omega_{M_\bR}),\\ &\quad  (p(m_2), n, t, \tau) \in SS(\cA_I) \text{ with }p(m_1+m_2)=p(m)\}.
\end{aligned}
\end{equation*}
Since $p_! \omega_{M_\bR}$ is locally free $n = 0$. On the other hand nonzero covectors in $SS(\cA_I)$ are in the cone over $\Lambda_I$, and one can easily check in a local model that a covector $(0, \tau)$ in this cone must have $\tau = 0$. 

It now follows from (\ref{eq:invariance}) and Lemma \ref{lem:openorbitmain} that
$$ i^*_{T_N} ( CCC_{\cX_{\stackfano}}^{-1} ( K_{\{\Lambda_t\}}(\cA))) \cong CCC_{T_N}^{-1}(p_! \omega_{M_\bR} \star  K_{\{\Lambda_t\}}(\cA)) \cong CCC_{T_N}^{-1}(p_! \omega_{M_\bR} \star \cA) \cong \Gamma_c(p^*(\cA)).$$
Note that even though $\cA$ is not microsupported on $\Lambda_{\stackfan}$, the rightmost isomorphism follows from the same proof as the Lemma. 

Using the presentation (\ref{eq:altpres}) we obtain a triangle
$$ \Gamma_c(p^*(i_* \coeffs_B)) \to \Gamma_c(p^*(i_! \coeffs_W[2])) \to \Gamma_c(p^*(\cA)) \to \Gamma_c(p^*(i_* \coeffs_B))[1]. $$ 
 We immediately have $p^*(i_* \coeffs_B) \cong i_* \coeffs_{p^{-1}(B)}$ and $p^*(i_! \coeffs_W[2]) \cong i_! \coeffs_{p^{-1}(W)}[2]$. The preimages $p^{-1}(B)$ and $p^{-1}(W)$ are disjoint unions of contractible open sets in correspondence with $p^{-1}(\Gamma^b_0)$ and $p^{-1}(\Gamma^w_0)$, respectively. We identify $\pi_0( p^{-1}(B))$ and $\pi_0 (p^{-1}(W))$ with $M \times \Gamma^b_0$ and $M \times \Gamma^w_0$ as follows. The lifts $p^{-1}(\gamma_x)$, $p^{-1}(\gamma_y)$ carve $M_\bR$ into fundamental domains which each contain a unique element of $M$, and given $v \in \Gamma_0$ there is a unique point of $p^{-1}(v)$ in each such domain. 
 
Recalling that $\coeffs[T_N] \cong \coeffs M$, it follows that we have $M$-equivariant isomorphisms 
$$\Gamma_c(p^*(i_* \coeffs_B)) \cong \coeffs^{M \times \Gamma^b_0} \cong \coeffs[T_N]^{\Gamma^b_0}, \quad \Gamma_c(p^*(i_! \coeffs_W[2])) \cong \coeffs^{M \times \Gamma^w_0} \cong \coeffs[T_N]^{\Gamma^w_0}.$$ 
In particular, both are supported in cohomological degree zero. 
The fact that the first map in the resulting triangle
\[
\coeffs[T_N]^{\Gamma^b_0}  \to \coeffs[T_N]^{\Gamma^w_0}  \to \Gamma_c(p^*(\cA)) \to \coeffs[T_N]^{\Gamma^b_0}[1]
\]
is the Kasteleyn operator of $\cL$ now follows immediately from the way we associated $\cA$ to $\cL$ in the first place. Note also that since $K_\cL$ is injective the cone over it is just its cokernel. 
\end{proof}

The slightly different statement of Theorem \ref{thm:mainthmintro} follows easily from Theorem \ref{thm:mainresult}, just being reformulated in terms of the complete stack $\cX_{\stackfan}$. 

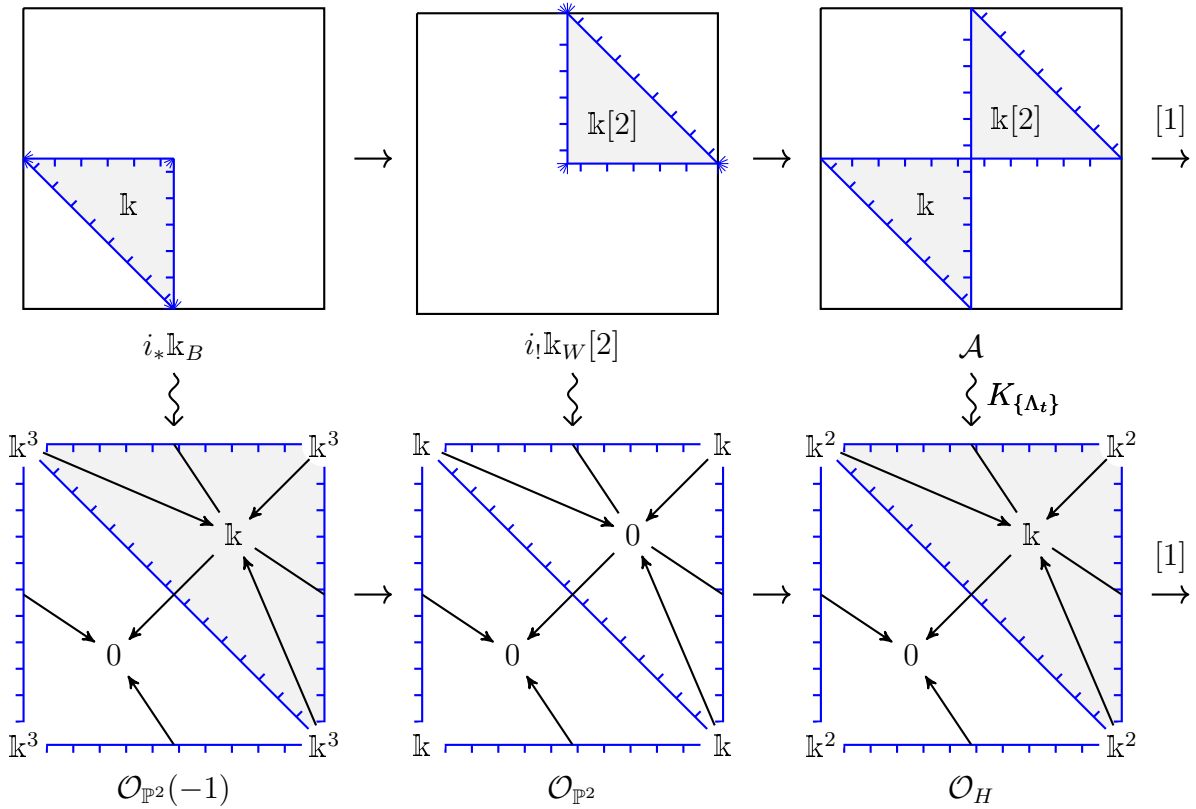
\begin{figure}
	\begin{tikzpicture}
	\newcommand*{\edgelen}{4}; \newcommand*{\wcircrad}{.3}
	\newcommand*{\xytrtri}{.7}; \newcommand*{\xybltri}{.3};
	\newcommand*{\ashort}{8}; \newcommand*{\bshort}{4};
	\newcommand*{\toprowheight}{5.8}; \newcommand*{\bspace}{5.3};
	\newcommand*{\rarrs}{2.4}; \newcommand*{\rarrt}{2.9};
	\newcommand*{\darrs}{2.95}; \newcommand*{\darrt}{2.25};
	\newcommand*{\vertrad}{.14}; \newcommand*{\angdelta}{22.5};
	
	\node (d) [matrix] at (-\bspace,\toprowheight) {
		\coordinate (tl) at (0,\edgelen); \coordinate (tr) at (\edgelen,\edgelen);
		\coordinate (bl) at (0,0); \coordinate (br) at (\edgelen,0);
		\coordinate (bm) at (\edgelen*.5,0); \coordinate (lm) at (0,\edgelen*.5);
		\coordinate (mm) at (\edgelen*.5,\edgelen*.5);
		\path[fill=\fcolor] (lm) -- (mm) -- (bm) -- (lm);
		\draw[thick] (tl) to (bl) to (br) to (tr) to (tl);
		\foreach \a/\b in {mm/bm, bm/lm} {\draw[asdstyle,righthairs] (\a) to (\b);}
		\foreach \a/\b in {mm/lm} {\draw[asdstyle,lefthairs] (\a) to (\b);}
		\node (bltri) at (.35*\edgelen, .35*\edgelen) {$\coeffs$};
		\foreach \ang in {8,...,12} {
			\draw[blue] ($(mm)+(\ang*\angdelta:\vertrad)$) to (mm);}
		\foreach \ang in {12,...,18} {
			\draw[blue] ($(lm)+(\ang*\angdelta:\vertrad)$) to (lm);}
		\foreach \ang in {2,...,8} {
			\draw[blue] ($(bm)+(\ang*\angdelta:\vertrad)$) to (bm);}
		\\};
	\node (e) [matrix] at (0,\toprowheight) {
		\coordinate (tl) at (0,\edgelen); \coordinate (tr) at (\edgelen,\edgelen);
		\coordinate (bl) at (0,0); \coordinate (br) at (\edgelen,0);
		\coordinate (tm) at (\edgelen*.5,\edgelen); \coordinate (rm) at (\edgelen,\edgelen*.5);
		\coordinate (mm) at (\edgelen*.5,\edgelen*.5);
		\path[fill=\fcolor] (tm) -- (mm) -- (rm) -- (tm);
		\draw[thick] (tl) to (bl) to (br) to (tr) to (tl);
		\foreach \a/\b in {mm/rm, rm/tm} {\draw[asdstyle,righthairs] (\a) to (\b);}
		\foreach \a/\b in {mm/tm} {\draw[asdstyle,lefthairs] (\a) to (\b);}
		\node (bltri) at (.65*\edgelen, .625*\edgelen) {$\coeffs[2]$};
		\foreach \ang in {8,...,12} {
			\draw[blue] ($(mm)+(\ang*\angdelta:\vertrad)$) to (mm);}
		\foreach \ang in {12,...,18} {
			\draw[blue] ($(rm)+(\ang*\angdelta:\vertrad)$) to (rm);}
		\foreach \ang in {2,...,8} {
			\draw[blue] ($(tm)+(\ang*\angdelta:\vertrad)$) to (tm);}
		\\};
	\node (f) [matrix] at (\bspace,\toprowheight) {
		\coordinate (tl) at (0,\edgelen); \coordinate (tr) at (\edgelen,\edgelen);
		\coordinate (bl) at (0,0); \coordinate (br) at (\edgelen,0);
		\coordinate (tm) at (\edgelen*.5,\edgelen); \coordinate (rm) at (\edgelen,\edgelen*.5);
		\coordinate (bm) at (\edgelen*.5,0); \coordinate (lm) at (0,\edgelen*.5);
		\coordinate (mm) at (\edgelen*.5,\edgelen*.5);
		\path[fill=\fcolor] (lm) -- (mm) -- (bm) -- (lm);
		\path[fill=\fcolor] (tm) -- (mm) -- (rm) -- (tm);
		\draw[thick] (tl) to (bl) to (br) to (tr) to (tl);
		\foreach \a/\b in {mm/rm, mm/bm, rm/tm, bm/lm} {\draw[asdstyle,righthairs] (\a) to (\b);}
		\foreach \a/\b in {mm/lm, mm/tm} {\draw[asdstyle,lefthairs] (\a) to (\b);}
		\node (bltri) at (.35*\edgelen, .35*\edgelen) {$\coeffs$};
		\node (bltri) at (.65*\edgelen, .625*\edgelen) {$\coeffs[2]$};
		\\};
	
	\node (a) [matrix] at (-\bspace,0) {
		\coordinate (tl) at (0,\edgelen); \coordinate (tr) at (\edgelen,\edgelen);
		\coordinate (bl) at (0,0); \coordinate (br) at (\edgelen,0);
		\path[fill=\fcolor] (tl) -- (tr) -- (br) -- (tl);
		\foreach \a/\b in {tl/bl, tr/br, br/tl} {\draw[asdstyle,righthairs] (\a) to (\b);}
		\foreach \a/\b in {tr/tl, br/bl} {\draw[asdstyle,lefthairs] (\a) to (\b);}
		\foreach \c in {tl, tr, bl, br} {
			\fill[white] (\c) circle (\wcircrad);
			\node at (\c) {$\coeffs^3$};}
		\node (trtri) at (\xytrtri*\edgelen, \xytrtri*\edgelen) {$\coeffs$};
		\node (bltri) at (\xybltri*\edgelen, \xybltri*\edgelen) {$0$};
		\draw[genmapstyle, shorten <=\ashort] (tl) to (trtri);
		\draw[genmapstyle, shorten <=\ashort] (br) to (trtri);
		\draw[genmapstyle, shorten <=\ashort, shorten >=-\bshort+1] (tr) to (trtri);
		\draw[thick] (trtri) to (\edgelen,.5*\edgelen); 
		\draw[genmapstyle] (0,.5*\edgelen) to (bltri);
		\draw[thick] (trtri) to (.5*\edgelen,\edgelen); 
		\draw[genmapstyle] (.5*\edgelen,0) to (bltri);
		\draw[genmapstyle, shorten >=-\bshort+2] (trtri) to (bltri);
		\\};
	\node (b) [matrix] at (0,0) {
		\coordinate (tl) at (0,\edgelen); \coordinate (tr) at (\edgelen,\edgelen);
		\coordinate (bl) at (0,0); \coordinate (br) at (\edgelen,0);
		\foreach \a/\b in {tl/bl, tr/br, br/tl} {\draw[asdstyle,righthairs] (\a) to (\b);}
		\foreach \a/\b in {tr/tl, br/bl} {\draw[asdstyle,lefthairs] (\a) to (\b);}
		\foreach \c in {tl, tr, bl, br} {
			\fill[white] (\c) circle (\wcircrad);
			\node at (\c) {$\coeffs$};}
		\node (trtri) at (\xytrtri*\edgelen, \xytrtri*\edgelen) {$0$};
		\node (bltri) at (\xybltri*\edgelen, \xybltri*\edgelen) {$0$};
		\draw[genmapstyle, shorten <=\ashort] (tl) to (trtri);
		\draw[genmapstyle, shorten <=\ashort] (br) to (trtri);
		\draw[genmapstyle, shorten <=\ashort, shorten >=-\bshort+1] (tr) to (trtri);
		\draw[thick] (trtri) to (\edgelen,.5*\edgelen); 
		\draw[genmapstyle] (0,.5*\edgelen) to (bltri);
		\draw[thick] (trtri) to (.5*\edgelen,\edgelen); 
		\draw[genmapstyle] (.5*\edgelen,0) to (bltri);
		\draw[genmapstyle, shorten >=-\bshort+2] (trtri) to (bltri);
		\\};
	\node (c) [matrix] at (\bspace,0) {
		\coordinate (tl) at (0,\edgelen); \coordinate (tr) at (\edgelen,\edgelen);
		\coordinate (bl) at (0,0); \coordinate (br) at (\edgelen,0);
		\path[fill=\fcolor] (tl) -- (tr) -- (br) -- (tl);
		\foreach \a/\b in {tl/bl, tr/br, br/tl} {\draw[asdstyle,righthairs] (\a) to (\b);}
		\foreach \a/\b in {tr/tl, br/bl} {\draw[asdstyle,lefthairs] (\a) to (\b);}
		\foreach \c in {tl, tr, bl, br} {
			\fill[white] (\c) circle (\wcircrad);
			\node at (\c) {$\coeffs^2$};}
		\node (trtri) at (\xytrtri*\edgelen, \xytrtri*\edgelen) {$\coeffs$};
		\node (bltri) at (\xybltri*\edgelen, \xybltri*\edgelen) {$0$};
		\draw[genmapstyle, shorten <=\ashort] (tl) to (trtri);
		\draw[genmapstyle, shorten <=\ashort] (br) to (trtri);
		\draw[genmapstyle, shorten <=\ashort, shorten >=-\bshort+1] (tr) to (trtri);
		\draw[thick] (trtri) to (\edgelen,.5*\edgelen); 
		\draw[genmapstyle] (0,.5*\edgelen) to (bltri);
		\draw[thick] (trtri) to (.5*\edgelen,\edgelen); 
		\draw[genmapstyle] (.5*\edgelen,0) to (bltri);
		\draw[genmapstyle, shorten >=-\bshort+2] (trtri) to (bltri);
		\\};
	
	\newcommand*{\utxtoff}{2.5}; \newcommand*{\btxtoff}{2.6};
	\node (atxt) at (-\bspace, -\btxtoff) {$\cO_{\bP^2}(-1)$};
	\node (btxt) at (0, -\btxtoff) {$\cO_{\bP^2}$};
	\node (ctxt) at (\bspace, -\btxtoff) {$\cO_H$};
	\node (dtxt) at (-\bspace, \toprowheight-\utxtoff) {$i_* \coeffs_B$};
	\node (etxt) at (0, \toprowheight-\utxtoff) {$i_! \coeffs_W[2]$};
	\node (ftxt) at (\bspace, \toprowheight-\utxtoff) {$\cA$};
	\foreach \c in {a,b,c,d,e,f} {
		\draw[thick,->] ($(\c)+(\rarrs,0)$) to ($(\c)+(\rarrt,0)$); }
	\foreach \c in {c,f} {\node at ($(\c)+(\bspace*.5,.5)$) {$[1]$};} 
	\foreach \c/\d in {a/d,b/e,c/f} {
		\draw[arrowstyle] ($(\c)+(0,\darrs)$) to ($(\c)+(0,\darrt)$);
		\draw[arrhdstyle] ($(\c)+(0,\darrt)$) to ($(\c)+(0,\darrt)+(1.2mm,1.2mm)$);
		\draw[arrhdstyle] ($(\c)+(0,\darrt)$) to ($(\c)+(0,\darrt)+(-1.2mm,1.2mm)$);
	\node at ($(c)+(0,\darrt)+(.7,.35)$) {$K_{\{\Lambda_t\}}$};}
	
	\end{tikzpicture}
	\caption{The action of a Legendrian isotopy from $\Lambda_\Gamma$ to $\Lambda_\Sigma$ on an alternating sheaf in $\Lambda_\Gamma$ (right column, top to bottom). Here $\Gamma$ is the hexagonal lattice projected to a minimal fundamental domain (as in Figure \ref{fig:P1graph}) and $\Sigma$ is the fan of $\bP^2$.}\label{fig:mainex}	
\end{figure}

\begin{example}\label{ex:mainex}
	We continue with Example \ref{ex:P1graph}. The top row of Figure \ref{fig:mainex} illustrates the triangle
	$$ i_* \coeffs_B \to i_! \coeffs_W[2] \to \cA \to i_* \coeffs_B[1] $$
	presenting an alternating sheaf $\cA$. The Hom sheaf $\sheafHom(i_* \coeffs_B, i_! \coeffs_W[2])$ is a sum of skyscraper sheaves at the three crossings of $\pi(\Lambda_\Gamma)$. Up to isomorphism $\cA$ is determined by a section of this sum which is nonvanishing at all three crossings. Note that whereas $\cA$ is an object of $\Sh^c_{\Lambda_\Gamma}(T^2)$, the sheaves $i_* \coeffs_B$ and $i_! \coeffs_W[2]$ are not: above the crossings of $\pi(\Lambda_\Gamma)$ their singular support contains an interval of codirections which lie outside $\Lambda_\Gamma$. Informally, these codirections ``cancel out'' upon taking the cone.

Let $\{\Lambda_t\}_{t \in I}$ be the Legendrian isotopy which carries $\Lambda_\Gamma$ to $\Lambda_{\Sigmao}$ by moving each front projection up and to the right in the pictured fundamental domain. That is, starting from the top right of Figure \ref{fig:mainex} we collapse the upper right triangle into the upper right corner while expanding the lower left triangle to take up the entire upper right half of the picture. In this case, using the main theorem of \cite{Zho18} one can in fact extend the associated GKS equivalence $K_{\{\Lambda_t\}}$ to an equivalence $\Sh^c_{\Lambda'_\Gamma}(T^2) \congto \Sh^c_{\Lambda_\Sigma}(T^2)$, where 
\begin{equation}\label{eq:Lambdaprime} \Lambda'_\Gamma := \Lambda_\Gamma \cup SS^\infty(i_* \coeffs_B) \cup SS^\infty(i_! \coeffs_W[2]). \end{equation}

Composing with the coherent-constructible correspondence for $\bP^2$, the choice of a section of $\sheafHom(i_* \coeffs_B, i_! \coeffs_W[2])$ becomes the choice of a linear equation for a hyperplane $H \subset \bP^2$. The nonvanishing condition at crossings translates to the condition that $H$ does not meet the $T_N$-fixed points in $\bP^2$. The alternating sheaf $\cA$ itself is mapped to the structure sheaf of $H$, while $i_* \coeffs_B$ and $i_! \coeffs_W[2]$ are mapped to $\cO_{\bP^2}(-1)$ and $\cO_{\bP^2}$, respectively.
\end{example}

\begin{remark}
One can extend $K_{\{\Lambda_t\}}$ to an equivalence $\Sh^c_{\Lambda'_\Gamma}(T^2) \congto \Sh^c_{\Lambda_{\stackfan}}(T^2)$ more generally, where $\Lambda'_\Gamma$ is as in (\ref{eq:Lambdaprime}), provided one can find an isotopy between the singular Legendrians $\Lambda'_\Gamma$ and $\Lambda_{\stackfan}$ which satisfies the criteria of \cite{Zho18}. However, we do not know whether this is possible for arbitrary $\Gamma$. When it is possible, the sheaves $i_* \coeffs_B$ and $i_! \coeffs_W[2]$ are taken by the composition of $K_{\{\Lambda_t\}}$ and the $CCC$ to direct sums of line bundles on $\cX_\stackfan$. 
\end{remark}

Given a quadrilateral face of a bipartite graph $\Gamma \subset T^2$, we can produce a new bipartite graph $\Gamma'$ by performing a square move. The new graph is obtained from $\Gamma$ by gluing in the local picture of Figure \ref{fig:squaremove}. The Legendrians $\Lambda_\Gamma$ and $\Lambda_{\Gamma'}$ are related by a Legendrian isotopy supported above a small open set containing the face. The Lagrangians $L_\Gamma$ and $L_{\Gamma'}$ discussed in the introduction are related as the two inequivalent Lagrangian surgeries on a singular Lagrangian which interpolates between them \cite{STWZ}. 

The relation between alternating sheaves defined with respect to the two graphs $\Gamma$ and $\Gamma'$ is naturally described in terms of face coordinates. That is, for any face $F$ of $\Gamma$ we have a function $X_F$ on $\Loc_1(\Gamma)$ whose value on a local system is its holonomy around $\partial F$ (taken counterclockwise in the local model of Figure \ref{fig:squaremove}). The coordinate ring of $\Loc_1(\Gamma)$ is the quotient of the Laurent polynomial ring in its face coordinates modulo the relation that the product of all face coordinates is 1. 

 Note that the dual graph of $\Gamma$ is naturally a quiver (we orient the dual graph so that any edge passes a white vertex on its right) and when $\Gamma$ undergoes a square move its dual graph undergoes a quiver mutation. A key result of \cite{STWZ} is that alternating sheaves before and after a square move are related by a commutative diagram
\[
\begin{tikzpicture}
[thick,>=\arrtip,every text node part/.style={align=center}]
\newcommand*{\xa}{2.8}; \newcommand*{\xb}{2.8}; \newcommand*{\xc}{2.5}; \newcommand*{\xd}{2.5}
\newcommand*{\ya}{1.5}; \newcommand*{\yb}{1.8};
\node[matrix] (left) at (0,0) {
	\node (a) at (0,0) {$\Loc_1(L_\Gamma)$};
	\node (b) at (0,-\ya) {$\Loc_1(L_{\Gamma'})$};
	\node (c) at (\xa,0) {$\Sh^c_{\Lambda_{\Gamma}}(T^2)$};
	\node (d) at (\xa,-\ya) {$\Sh^c_{\Lambda_{\Gamma'}}(T^2).$};
	\draw[dashed,->] (a) to  (b); \draw[->] (c) to node[below, rotate=90] {$\sim$} (d);
	\draw[right hook->] (b) to (d); \draw[right hook->] (a) to (c);\\};
\end{tikzpicture}
\]
Here the right map is the equivalence defined by the local isotopy $\Lambda_{\Gamma} \to \Lambda_{\Gamma'}$ and the left map is the cluster $\cX$-transformation associated to the mutation of the dual quiver. Explicitly this means the two families of alternating sheaves are related by the following rational map. Let $X_M$, $X'_M$ be the face coordinates of the middle faces of $\Gamma$, $\Gamma'$ in Figure \ref{fig:squaremove}, and $X_{SW}$, $X'_{SW}$ the face coordinates of the southwest faces (similarly for $X_{NE}$, etc...). Then the two sets of face coordinates are related by
\begin{gather*}
X'_M = X_M^{-1}, \quad X'_{SE} = X_{SE}(1+X_M),  \quad X'_{NW} = X_{NW}(1+X_M),\\
X'_{SW} = X_{SW}(1+X_M^{-1})^{-1},  \quad X'_{NE} = X_{NE}(1+X_M^{-1})^{-1}
\end{gather*}
and $X'_F = X_F$ if $F$ does not share an edge with the given square face. 

We also note from \cite{STWZ} that the appearance of positive signs in the above formula is equivalent to the sign condition on the Kasteleyn orientation $\kappa$ at the given square face. Had we specified the trivialization (\ref{eq:triv}) with a function $\Gamma_1 \to \{\pm 1\}$ whose product around this face was 1 we would instead see minus signs in the above coordinate change. From the preceding discussion and Theorem \ref{thm:mainresult} one immediately obtains Corollary \ref{cor:clusterintro}.

\begin{figure}
	\centering
	\begin{tikzpicture}
	\newcommand*{\off}{18};\newcommand*{\rad}{2.3};\newcommand*{\vrad}{1.0};
	\node (l) [matrix] at (0,0) {
		\coordinate (bl) at (180:\rad/2); \coordinate (br) at (0:\rad/2);
		\coordinate (wt) at (90:\rad*.8); \coordinate (wb) at (-90:\rad*.8);
		\coordinate (wr) at (0:\rad*.9); \coordinate (wl) at (180:\rad*.9);
		\coordinate (ur) at ($(-\off:\rad)!\ifac!(-180+\off:\rad)$); \coordinate (lr) at ($(\off-180:\rad)!\ifac!(-\off:\rad)$);
		\coordinate (ul) at ($(\off:\rad)!\ifac!(-180-\off:\rad)$); \coordinate (ll) at ($(-\off-180:\rad)!\ifac!(\off:\rad)$);
		\coordinate (mr) at ($(ur)!.5!(lr)$); \coordinate (ml) at ($(ul)!.5!(ll)$);
		\path[fill=\fcolor, name path=p1] (90+\off:\rad) -- (-90-\off:\rad) arc[start angle=-90-\off,end angle=-90+\off,radius=\rad] (-90+\off:\rad) -- (90-\off:\rad) arc[start angle=90-\off,end angle=90+\off,radius=\rad] (90+\off:\rad);
		\path[fill=\fcolor, name path=p2] (180-\off:\rad)  to[out=0,in=-180] (lr) to (ur) [out=0,in=-180] to (\off:\rad) arc[start angle=\off,delta angle=-2*\off,radius=\rad] (-\off:\rad) [out=-180,in=0] to (ul) to (ll) to[out=-180,in=0] (180+\off:\rad) arc[start angle=180+\off,delta angle=-2*\off,radius=\rad] (180-\off:\rad);
		\path[fill=white, name intersections={of=p1 and p2, name=i}] (i-1) -- (i-2) -- (i-4) -- (i-3) -- cycle;
		\draw[dashed] (0,0) circle (\rad);
		\draw[graphstyle] (wb) to (br) to (wt) to (bl) to (wb);
		\draw[graphstyle] (br) to (wr) to (0:\rad);
		\draw[graphstyle] (bl) -- (wl) -- (180:\rad);
		\draw[graphstyle] (wt) to (90:\rad); \draw[graphstyle] (wb) to (-90:\rad);
		\foreach \c in {bl,br,wt,wb,wr,wl} {\fill[black] (\c) circle (\bvertrad);}
		\foreach \c in {wt,wb,wr,wl} {\fill[white] (\c) circle (\wvertrad);}
		\draw[asdstyle,righthairs] (90+\off:\rad) -- (-90-\off:\rad);
		\draw[asdstyle,righthairs] (-90+\off:\rad) -- (90-\off:\rad);
		\draw[asdstyle,lefthairs] (mr) to (ur) [out=0,in=180] to (\off:\rad);
		\draw[asdstyle,righthairs] (mr) to (lr) to[in=0,out=180] (180-\off:\rad);
		\draw[asdstyle,righthairs] (ml) to (ul) [out=0,in=180] to (-\off:\rad);
		\draw[asdstyle,lefthairs] (ml) to (ll) to[in=0,out=180] (180+\off:\rad);\\};
	
	\node (m) [matrix] at (2.5*\rad,0) {
		\path[fill=\fcolor] (0,0) to[out=-90,in=90]  (-90-\off:\rad) arc[start angle=-90-\off,delta angle=2*\off,radius=\rad] (-90+\off:\rad) to[out=90,in=-90] (0,0);
		\path[fill=\fcolor] (0,0) to[out=0,in=180]  (-\off:\rad) arc[start angle=-\off,delta angle=2*\off,radius=\rad] (\off:\rad) to[out=180,in=0] (0,0);
		\path[fill=\fcolor] (0,0) to[out=90,in=-90]  (90-\off:\rad) arc[start angle=90-\off,delta angle=2*\off,radius=\rad] (90+\off:\rad) to[out=-90,in=90] (0,0);
		\path[fill=\fcolor] (0,0) to[out=180,in=0]  (180-\off:\rad) arc[start angle=180-\off,delta angle=2*\off,radius=\rad] (180+\off:\rad) to[out=0,in=180] (0,0);
		\draw[dashed] (0,0) circle (\rad);
		\draw[asdstyle,lefthairsnogap] (0,0) to[out=90,in=-90] (90+\off:\rad);
		\draw[asdstyle,righthairsnogap] (0,0) to[out=-90,in=90]  (-90-\off:\rad);
		\draw[asdstyle,righthairsnogap] (0,0) to[out=90,in=-90]  (90-\off:\rad);
		\draw[asdstyle,lefthairsnogap] (0,0) to[out=-90,in=90]  (-90+\off:\rad);
		\draw[asdstyle,lefthairsnogap] (0,0) to[out=0,in=180] (\off:\rad);
		\draw[asdstyle,righthairsnogap] (0,0) to[out=180,in=0] (180-\off:\rad);
		\draw[asdstyle,righthairsnogap] (0,0) to[out=0,in=180] (-\off:\rad);
		\draw[asdstyle,lefthairsnogap] (0,0) to[out=180,in=0] (180+\off:\rad);\\};
	
	\node (r) [matrix] at (5*\rad,0) {
		\coordinate (bl) at (180+90:\rad/2); \coordinate (br) at (0+90:\rad/2);
		\coordinate (wt) at (90+90:\rad*.8); \coordinate (wb) at (-90+90:\rad*.8);
		\coordinate (wr) at (0+90:\rad*.9); \coordinate (wl) at (180+90:\rad*.9);
		\coordinate (ur) at ($(-\off+90:\rad)!\ifac!(-90+\off:\rad)$); \coordinate (lr) at ($(\off-90:\rad)!\ifac!(90-\off:\rad)$);
		\coordinate (ul) at ($(\off+90:\rad)!\ifac!(-90-\off:\rad)$); \coordinate (ll) at ($(-\off-90:\rad)!\ifac!(90+\off:\rad)$);
		\coordinate (mr) at ($(ur)!.5!(lr)$); \coordinate (ml) at ($(ul)!.5!(ll)$);
		\path[fill=\fcolor, name path=p1] (90+90+\off:\rad) -- (-90+90-\off:\rad) arc[start angle=-\off,end angle=\off,radius=\rad] -- (180-\off:\rad) arc[start angle=180-\off,end angle=180+\off,radius=\rad];
		\path[fill=\fcolor, name path=p2] (180+90-\off:\rad)  to[out=90,in=-90] (lr) to (ur) [out=90,in=-90] to (\off+90:\rad) arc[start angle=90+\off,delta angle=-2*\off,radius=\rad] [out=-90,in=90] to (ul) to (ll) to[out=-90,in=90] (180+90+\off:\rad) arc[start angle=180+90+\off,delta angle=-2*\off,radius=\rad];
		\path[fill=white, name intersections={of=p1 and p2, name=i}] (i-1) -- (i-2) -- (i-4) -- (i-3) -- cycle;
		\draw[dashed] (0,0) circle (\rad);
		\draw[graphstyle] (wb) to (br) to (wt) to (bl) to (wb);
		\draw[graphstyle] (br) to (wr) to (90:\rad);
		\draw[graphstyle] (bl) -- (wl) -- (180+90:\rad);
		\draw[graphstyle] (wt) to (90+90:\rad); \draw[graphstyle] (wb) to (-90+90:\rad);
		\foreach \c in {bl,br,wt,wb,wr,wl} {\fill[black] (\c) circle (\bvertrad);}
		\foreach \c in {wt,wb,wr,wl} {\fill[white] (\c) circle (\wvertrad);}
		\draw[asdstyle,righthairs] (90+90+\off:\rad) -- (-90+90-\off:\rad);
		\draw[asdstyle,righthairs] (-90+90+\off:\rad) -- (90+90-\off:\rad);
		\draw[asdstyle,lefthairs] (mr) to (ur) [out=0+90,in=180+90] to (\off+90:\rad);
		\draw[asdstyle,righthairs] (mr) to (lr) to[in=0+90,out=180+90] (180+90-\off:\rad);
		\draw[asdstyle,righthairs] (ml) to (ul) [out=0+90,in=180+90] to (-\off+90:\rad);
		\draw[asdstyle,lefthairs] (ml) to (ll) to[in=0+90,out=180+90] (180+90+\off:\rad);\\};
	\coordinate (rw) at ($(r.west)+(-1mm,0)$); \coordinate (me) at ($(m.east)+(1mm,0)$);
	\coordinate (mw) at ($(m.west)+(-1mm,0)$); \coordinate (le) at ($(l.east)+(1mm,0)$);
	\foreach \ca/\cb in {le/mw,me/rw} {
		\draw[arrowstyle] (\ca) -- (\cb);
		\draw[arrhdstyle] ($(\cb)+(-1.2mm,1.2mm)$) -- (\cb); \draw[arrhdstyle] ($(\cb)+(-1.2mm,-1.2mm)$) -- (\cb);}
	\end{tikzpicture}
	\caption{The square move $\Gamma \to \Gamma'$ as a Legendrian isotopy $\Lambda_\Gamma \to \Lambda_\Gamma'$. Shaded regions indicate the images of the Lagrangians $L_\Gamma$, $L_{\Gamma'}$, and in the center frame of an immersed Lagrangian of which they are surgeries.}
	\label{fig:squaremove}
\end{figure}
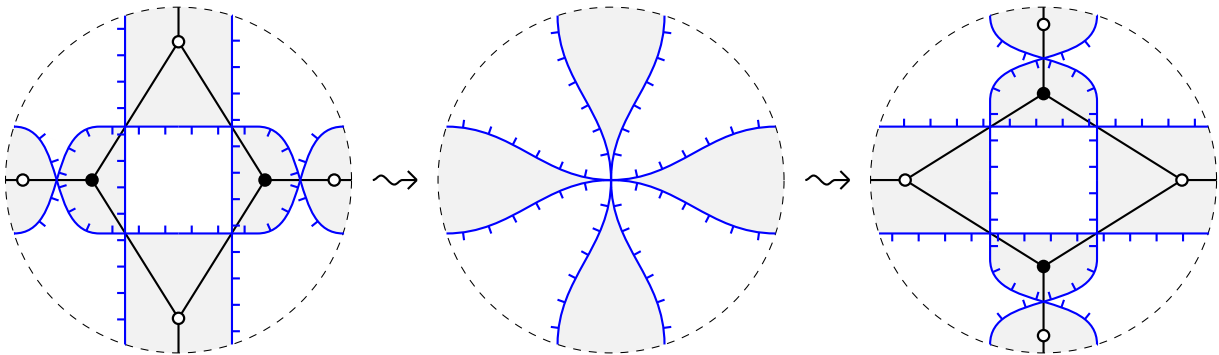

\section{Legendrian isotopies and discrete integrable systems}\label{sec:isotopies}
In stating Theorem \ref{thm:mainresult} we fixed an arbitrary choice of Legendrian isotopy $\Lambda_\Gamma \congto \Lambda_{\stackfano}$. However, there are many inequivalent choices, as the autoisotopy group of $\Lambda_{\stackfano}$ is disconnected. In this section we show that the action of autoisotopies on $\Sh^c_{\Lambda_{\stackfano}}(T^2)$ is mirrored by the action of tensoring by line bundles on coherent sheaves (hence this action preserves the restriction to $T_N$, as follows implicitly from Theorem \ref{thm:mainresult}). 

Recall from Section \ref{sec:CCC} that the components of $\Lambda_{\stackfano}$ are in correspondence with pairs of a ray $\rho \in \Sigma(1)$ and an element $[\chi] \in C_\rho$, where $C_\rho$ is the cokernel of the natural map $M \to M_\rho$. For each ray $\rho$, we let $\chi_\rho \in M_\rho$ be the generator whose value on a generator of $\rho \cap N$ is positive (this distinguishes $\chi_\rho$ from the generator $-\chi_\rho$). We choose a splitting $M_\rho \otimes \bR \into M_\bR$ of $M_\bR \to M_\rho \otimes \bR$, which lets us regard $\chi_\rho$ as a point in $M_\bR$ such that $|C_\rho|\chi_\rho \in M$. In the notation of Section \ref{sec:CCC} the locus $\rho^\perp_{n \chi_\rho} \subset M_\bR$ can then be written as the translate $\rho^\perp + n \chi_\rho$. Moreover, $\Lambda_{\stackfano}$ itself can be written as
$$ 
\Lambda_{\stackfano} := \bigcup_{\rho \in \Sigma(1)}\bigcup_{n=0}^{|C_\rho|-1} p(\rho^\perp + n \chi_\rho) \times [-\rho].
$$
Here we have identified $T^\infty T^2$ with $T^2 \times (N_\bR \smallsetminus \{0\})/\bR_+$, and given a subset $S$ of $N_\bR$ we write $[S]$ for its image in $(N_\bR \smallsetminus \{0\})/\bR_+$.  

Given a ray $\rho$ of $\Sigma$ we now define a Legendrian isotopy $\{\Lambda_t^\rho\}_{t \in I}$ with $\Lambda^\rho_0 = \Lambda^\rho_1 = \Lambda_{\stackfano}$ by setting
$$ 
\Lambda^\rho_t := \left( \bigcup_{\stackrel{\rho' \in \Sigma(1)}{\rho' \neq \rho}}\bigcup_{n=0}^{|C_{\rho'}|-1} p((\rho')^\perp + n \chi_{\rho'}) \times [-\rho']\right) \cup \left( \bigcup_{n=0}^{|C_{\rho}|-1} p(\rho^\perp + (n+t) \chi_{\rho}) \times [-\rho] \right).
$$
That is, the components of $\Lambda_{\stackfano}$ labeled by the ray $\rho$ move around $T^2$ and become cyclically permuted at the end of the isotopy, while the remaining components are left stationary. The associated GKS functor $K_{\{\Lambda^\rho_t\}}$ is an autoequivalence of $\Sh_{\Lambda_{\stackfano}}(T^2)$.

On the coherent side we will relate this autoequivalence to a line bundle $\cL_\rho$ on $\cX_{\stackfan}$. It is characterized by the condition $\cL_\rho^{|C_\rho|} \cong \pi^* \cO(-D_\rho)$, where $D_\rho$ is the toric divisor in $X_\Sigma$ associated to $\rho$ and $\pi$ is the projection from $\cX_{\stackfan}$ to its coarse moduli space $X_\Sigma$. It can also be characterized on the constructible side as follows. Let $\rho_L, \rho_R \in \Sigma(1)$ be the rays adjacent to $\rho$ in the cyclic order on $\Sigma(1)$. Then the lines $\rho^\perp + \chi_\rho, (\rho_L)^\perp,$ and $(\rho_R)^\perp$ bound a unique closed triangle $T_\rho \subset M_\bR$ (unless $\rho_L = - \rho_R$, in which case $T_\rho$ degenerates to an interval). Then $\cL_\rho$ is the unique line bundle such that the closure of the support of $\cT_\rho := CCC_{\cX_{\stackfan}}(\cL_\rho)$ is equal to $p(T_\rho) \subset T^2$. 

\begin{proposition}\label{prop:mirrorisotopies}
Let $\stackfan$ be a complete two-dimensional stacky fan, $\rho \in \Sigma(1)$ a ray, and $i: \cX_{\stackfano} \into \cX_{\stackfan}$ the inclusion. Then the following diagram of functors commutes.
\[
\begin{tikzpicture}[thick,>=\arrtip]
\node (a) at (0,0) {$\Perf_{prop}(\cX_{\stackfano})$};
\node (b) at (5,0) {$\Sh^c_{\Lambda_{\stackfano}}(T^2)$};
\node (c) at (0,-2) {$\Perf_{prop}(\cX_{\stackfano})$};
\node (d) at (5,-2) {$\Sh^c_{\Lambda_{\stackfano}}(T^2)$};
\draw[->] (a) to node[above] {$\CCC_{\cX_{\stackfano}}$} node[below] {$\sim$} (b);
\draw[->] (b) to node[below, rotate=-90] {$\sim$} node[right] {$K_{\{\Lambda^\rho_t\}}$} (d);
\draw[->] (a) to node[below, rotate=90] {$\sim$} node[left] {$ i^*\cL_\rho \otimes -$}(c);
\draw[->] (c) to node[above] {$\CCC_{\cX_{\stackfano}}$} node[below] {$\sim$} (d);
\end{tikzpicture}
\]
\end{proposition}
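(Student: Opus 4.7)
The plan is to pass to the full stack $\cX_{\stackfan}$ and exploit the monoidality of the coherent-constructible correspondence. Under $\CCC_{\cX_{\stackfan}}$ (Theorem \ref{thm:CCC}), tensoring with $\cL_\rho$ is intertwined with convolution by $\cT_\rho := \CCC_{\cX_{\stackfan}}(\cL_\rho)$ on $\Sh^c_{\Lambda_{\stackfan}}(T^2)$. The fully faithful inclusion $\Sh^c_{\Lambda_{\stackfano}}(T^2) \hookrightarrow \Sh^c_{\Lambda_{\stackfan}}(T^2)$ is mirror to $i_!: \Perfprop(\cX_{\stackfano}) \hookrightarrow \Perfprop(\cX_{\stackfan})$, and by the projection formula $(i_! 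F) \otimes \cL_\rho \cong i_!(F \otimes i^* \cL_\rho)$. It follows that $(-)\star\cT_\rho$ restricts to an autoequivalence of $\Sh^c_{\Lambda_{\stackfano}}(T^2)$ which corresponds under $\CCC_{\cX_{\stackfano}}$ to $(-)\otimes i^* \cL_\rho$. The proposition therefore reduces to showing that this restricted convolution is naturally isomorphic to $K_{\{\Lambda^\rho_t\}}$.

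To prove this isomorphism I would invoke the uniqueness clause of Theorem \ref{thm:GKS}. The goal is to construct a sheaf $\cK \in \Sh(I \times T^2 \times T^2)$ such that (a) $\cK|_{t=0}$ is the constant sheaf on the diagonal, (b) convolution with $\cK|_{t=1}$ is convolution with $\cT_\rho$, and (c) $SS(\cK) \cap (I \times \dot{T}^*T^2 \times \dot{T}^*T^2)$ projects to the graph of a contact isotopy lifting $\{\Lambda^\rho_t\}$. A natural candidate interpolates the support of $\cT_\rho$: for $t \in I$, let $T_\rho^t \subset M_\bR$ be the closed triangle bounded by the lines $(\rho_L)^\perp$, $(\rho_R)^\perp$, and $\rho^\perp + t\chi_\rho$, so that $T_\rho^1 = T_\rho$ while $T_\rho^t$ collapses to the origin as $t \to 0$. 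Applying $p_!$ to the constant sheaf on $p^{-1}(\Int T_\rho^t)$ (extended by zero) produces a family $\cT_\rho^I \in \Sh(T^2 \times I)$, from which $\cK$ is obtained by pullback along the convolution-kernel map $(x,y,t) \mapsto (y-x,t)$.

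The main obstacle is verifying (c). The required microsupport bound should follow in the style of \cite[Prop.~5.4.4]{KS94} and the bookkeeping in the proof of Theorem \ref{thm:mainresult}, now with the extra parameter $t$. The content of the calculation is that the two $t$-independent edges $(\rho_L)^\perp, (\rho_R)^\perp$ of $T_\rho^t$ contribute conormals in the directions $-\rho_L, -\rho_R$ with vanishing $\tau$-component along $I$, so their lifts are static parts of the isotopy. The moving edge $\rho^\perp + t\chi_\rho$ contributes conormals in the direction $-\rho$ whose front projection sweeps out precisely the $\rho$-labeled components of $\Lambda^\rho_t$, producing the moving portion of the graph of the isotopy. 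Once (c) is verified, uniqueness of the GKS kernel identifies convolution with $\cK|_t$ with $K_{\{\Lambda^\rho_t\}}$, and taking $t=1$ and restricting to $\Sh^c_{\Lambda_{\stackfano}}(T^2)$ completes the proof.
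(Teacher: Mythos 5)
Your first step is exactly right and matches the paper: monoidality of the CCC plus the projection formula for the open inclusion $i$ reduces the Proposition to showing that convolution with $\cT_\rho = \CCC_{\cX_{\stackfan}}(\cL_\rho)$, restricted to $\Sh^c_{\Lambda_{\stackfano}}(T^2)$, is isomorphic to $K_{\{\Lambda^\rho_t\}}$. The gap is in how you propose to prove that identification. You want to invoke the uniqueness clause of Theorem \ref{thm:GKS}, which characterizes the kernel of an ambient \emph{contact isotopy} by the condition that its microsupport at each $t$ projects to the graph of $\phi_t$ in $T^\infty T^2 \times T^\infty T^2$. A kernel built by pulling back a family $\cT^t_\rho$ along $(x,y,t)\mapsto(y-x,t)$ can never satisfy that condition: its microsupport at infinity is $\{(x,[-\xi],y,[\xi]) : (y-x,[\xi]) \in SS^\infty(\cT^t_\rho)\}$, which over a codirection $[\xi]=[-\rho]$ has a whole segment of points $y$ for each $(x,[-\xi])$ (the translates along the moving edge of the triangle), and over codirections not occurring in $SS^\infty(\cT^t_\rho)$ is empty. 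So it is neither the graph of a bijection of $T^\infty T^2$ nor surjective onto either factor. Equivalently, convolution with $\cT_\rho$ is not an autoequivalence of all of $\Sh(T^2)$, whereas every GKS kernel is; the two functors agree only on the subcategory microsupported in $\Lambda_{\stackfano}$. Hence your condition (c) is unachievable and the uniqueness argument cannot close.

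The repair is to replace Theorem \ref{thm:GKS} by Corollary \ref{cor:LegendrianGKS}, which is what the paper does: $K_{\{\Lambda^\rho_t\}} = i_{T^2_1}^*\circ(i_{T^2_0}^*)^{-1}$ on $\Sh^c_{\Lambda^\rho_I}(T^2\times I)$, so it suffices to produce a family $\cT_{\rho,I}$ with $i_{T^2_1}^*\cT_{\rho,I}\cong\cT_\rho$, $i_{T^2_0}^*\cT_{\rho,I}\cong\coeffs_{\{0\}}$ (the convolution unit, not the constant sheaf on the diagonal), and then verify the \emph{sheaf-dependent} bound $SS(\cT_{\rho,I}\star\cF)\subset\Lambda^\rho_I$ for every $\cF$ microsupported in $\Lambda_{\stackfano}$; this bound uses crucially that the only codirections in $SS(\cF)$ are the $[-\rho']$ and that the front loci are closed under the relevant translations, and it is much weaker than your (c). Two further details in your candidate would also need fixing: $\cT_\rho$ is not $p_!$ of the constant sheaf on the open triangle extended by zero — it is $p_!\widehat{\cT}_\rho$ for a twisted polytope sheaf which, depending on whether $\rho$ lies in the positive span of $\rho_L,\rho_R$, has interior stalk $\coeffs[1]$ with a prescribed microsupport or is $j_*\coeffs_{T_\rho}$ on the closed triangle — so your condition (b) fails as stated; and the interpolation should be implemented by the scaling map $(m,t)\mapsto(tm,t)$ applied to $\widehat{\cT}_\rho$, so that the $t=0$ fiber is the skyscraper with stalk $\Gamma(\cT_\rho)\cong\coeffs$.
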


\begin{proof}
Recall that the coherent-constructible correspondence is a monoidal equivalence intertwining the standard tensor product on the coherent side with the convolution product on the constructible side. Thus we obtain a canonically commutative diagram from the one above by replacing $K_{\{\Lambda^\rho_t\}}$ with the functor $CCC_{\cX_{\stackfano}}(i^*\cL_\rho) \star -$. 

We first claim that if $\cF \in \Sh^c_{\Lambda_{\stackfano}}$ then $CCC_{\cX_{\stackfano}}(i^*\cL_\rho) \star \cF \cong \cT_\rho \star \cF$, where as above  $\cT_\rho := CCC_{\cX_{\stackfan}}(\cL_\rho)$; note that $CCC_{\cX_{\stackfano}}(i^*\cL_\rho)$ and $CCC_{\cX_{\stackfan}}(\cL_\rho)$ are not themselves isomorphic, as the former will have infinite-rank stalks (since unlike $\cL_\rho$ the restriction $i^*\cL_\rho$ does not have proper support). On the other hand, if $\cG \in \Perf_{prop}(\cX_{\stackfano})$ then we do have $CCC_{\cX_{\stackfano}}(\cG) \cong CCC_{\cX_{\stackfan}}(i_* \cG)$. Setting $\widehat{\cF} = CCC_{\cX_{\stackfano}}^{-1}(\cF)$, we use this together with the push-pull isomorphism $i_*(i^*\cL_\rho \otimes \widehat{\cF}) \cong \cL_\rho \otimes i_*\widehat{\cF}$ to obtain
\begin{align*}
CCC_{\cX_{\stackfano}}(i^*\cL_\rho) \star \cF & \cong CCC_{\cX_{\stackfano}}(i^*\cL_\rho \otimes \widehat{\cF})\\
& \cong CCC_{\cX_{\stackfan}}(\cL_\rho \otimes i_*\widehat{\cF}) \\
& \cong \cT_\rho \star \cF.
\end{align*}

The sheaf $\cT_\rho$ is an example of a twisted polytope sheaf \cite{Zho17}. In the case at hand we have $\cT_\rho \cong p_!\widehat{\cT}_\rho$ for the following sheaf $\widehat{\cT}_\rho$ on $M_\bR$. First, let $m_L, m_R \in M_\bR$ denote the points where the line $\rho^\perp + \chi_\rho$ crosses the lines $\rho_L^\perp, \rho_R^\perp$, respectively. We write $I_{[m_L, m_R]} \subset M_\bR$ for the interval with endpoints $m_L, m_R$, similarly for $I_{[0, m_L]}, I_{[0,m_R]}$. They form the boundary of the (possibly degenerate) triangle $T_\rho$ discussed before the Proposition. We also write $\sigma_{[\rho_L,\, \rho_R]} \subset N_\bR$ for the cone whose boundary rays are $\rho_L, \rho_R$ but which does not contain $\rho$ in its interior, similarly for $\sigma_{[\rho,\, \rho_L]}, \sigma_{[\rho,\, \rho_R]}$ --- note that $\sigma_{\rho_L, \rho_R}$ need not be convex. 

The singular support of $\widehat{\cT}_\rho$ will be
\begin{align*} SS(\widehat{\cT}_\rho) =& (T_\rho \times \{0\}) \cup (I_{[m_L, m_R]} \times \rho) \cup (I_{[0, m_R]} \times \rho_R) \cup (I_{[m0, m_L]} \times \rho_L) \\
& \quad \cup (\{0\} \times \sigma_{[\rho_L,\, \rho_R]}) \cup (\{m_L\} \times \sigma_{[\rho,\, \rho_L]}) \cup (\{m_R\} \times \sigma_{[\rho,\, \rho_R]}),
\end{align*}
where as usual we identify $T^*M_\bR$ with $M_\bR \times N_\bR$. If $\rho$ is in the positive span of $\rho_L, \rho_R$, then $\widehat{\cT}_\rho$ is characterized by this singular support condition and the property that its stalk at any interior point of $T_\rho$ is $\coeffs[1]$. Otherwise we have $\widehat{\cT}_\rho \cong j_* \coeffs_{T_\rho}$. Examples of the two cases are pictured below, hairs indicating codirections of singular support.
$$
\begin{tikzpicture}
	\newcommand*{\edgelen}{5.5}; \newcommand*{\vertrad}{.12}; \newcommand*{\crad}{.05}
	\newcommand*{\gspace}{.7}; \newcommand*{\angdelta}{22.5};
	\node (c) [matrix] at (8,.29) {
		\foreach \x in {0,...,4} \foreach \y in {0,...,4}
			{\fill (\x*\gspace,\y*\gspace) circle (\crad);}
		\foreach \x/\y in {4.4/2,2/4.4,4.3/4.3,-.4/.8}
			{\draw[-stealth',thick] (2*\gspace,2*\gspace) to (\x*\gspace,\y*\gspace);}
		\node (e1) at (2.5*\gspace,4.4*\gspace) {$\rho_L$};
		\node (e2) at (4.4*\gspace,1.6*\gspace) {$\rho_R$};
		\node (e3) at (4.4*\gspace,4*\gspace) {$\rho$};
	\\};
	\node (d) [matrix] at (12,0) {
		\coordinate (tl) at (0,\edgelen); \coordinate (tr) at (\edgelen,\edgelen);
		\coordinate (bl) at (0,0); \coordinate (br) at (\edgelen,0); 
		\coordinate (tm) at (\edgelen/2,\edgelen); \coordinate (bm) at (\edgelen/2,0);
		\coordinate (mm) at (\edgelen/2,\edgelen/2);
		\path[fill=\fcolor] (bm) -- (mm) -- (br) -- (bm);
		\node at ($(bm)+(\edgelen/6.5,\edgelen/6.5)$) {$\coeffs[1]$};
		\foreach \a/\b in {mm/bm, bm/br,mm/br} {\draw[asdstyle,righthairs] (\a) to (\b);}
		\foreach \c in {bm} \foreach \ang in {12,...,24} 
			{\draw[blue,thick] ($(\c)+(\ang*\angdelta:\vertrad+.02)$) to (\c);}
		\foreach \ang in {8,...,10} 
			{\draw[blue,thick] ($(mm)+(\ang*\angdelta:\vertrad+.02)$) to (mm);}
		\foreach \ang in {10,...,12} 
			{\draw[blue,thick] ($(br)+(\ang*\angdelta:\vertrad+.02)$) to (br);}	\\};
\node (a) [matrix] at (0,-.1) {
		\foreach \x in {0,...,4} \foreach \y in {0,...,4}
			{\fill (\x*\gspace,\y*\gspace) circle (\crad);}
		\foreach \x/\y in {-.4/2,2/-.4,4.3/4.3,-.4/.8}
			{\draw[-stealth',thick] (2*\gspace,2*\gspace) to (\x*\gspace,\y*\gspace);}
		\node (e1) at (-.4*\gspace,2.4*\gspace) {$\rho_L$};
		\node (e2) at (2.4*\gspace,-.4*\gspace) {$\rho_R$};
		\node (e3) at (4.4*\gspace,4*\gspace) {$\rho$};
	\\};
\node (b) [matrix] at (4,0) {
		\coordinate (tl) at (0,\edgelen); \coordinate (tr) at (\edgelen,\edgelen);
		\coordinate (bl) at (0,0); \coordinate (br) at (\edgelen,0); 
		\coordinate (tm) at (\edgelen/2,\edgelen); \coordinate (bm) at (\edgelen/2,0);
		\coordinate (mm) at (\edgelen/2,\edgelen/2);
		\path[fill=\fcolor] (bm) -- (mm) -- (br) -- (bm);
		\node at ($(bm)+(\edgelen/6.5,\edgelen/6.5)$) {$\coeffs$};
		\foreach \a/\b in {bm/mm, br/bm,mm/br} {\draw[asdstyle,righthairs] (\a) to (\b);}
		\foreach \c in {bm} \foreach \ang in {0,...,4} 
			{\draw[blue,thick] ($(\c)+(\ang*\angdelta:\vertrad+.02)$) to (\c);}
		\foreach \ang in {-6,...,0} 
			{\draw[blue,thick] ($(mm)+(\ang*\angdelta:\vertrad+.02)$) to (mm);}
		\foreach \ang in {4,...,10} 
			{\draw[blue,thick] ($(br)+(\ang*\angdelta:\vertrad+.02)$) to (br);}	\\};
\end{tikzpicture}
$$
In each case, different choices of stacky structure on the pictured fan result in different scalings of the pictured triangle. 

We now define a sheaf $\cT_{\rho,I} \in \Sh(T^2 \times I)$ as follows. Let $s: M_\bR \times I \to M_\bR \times I$ be the scaling map $(m, t) \mapsto (tm, t)$. Then we set $\cT_{\rho,I} := (p \times id_I)_! s_!(\widehat{\cT}_\rho \boxtimes \coeffs_I)$. We have $i_{T^2_1}^*\cT_{\rho,I} \cong \cT_\rho$ by construction, where as usual we write $T^2_t$ for $T^2 \times \{t\} \subset T^2 \times I$. Moreover, $i_{T^2_0}^*\cT_{\rho,I}$ is the skyscraper sheaf $\coeffs_{\{0\}}$. Indeed, $i_{T^2_0}^*\cT_{\rho,I}$ is a priori a skyscraper at $0$ with stalk $\Gamma(\cT_\rho)$, but $\Gamma(\cF) \cong \coeffs$ whenever $CCC_{\cX_{\stackfan}}^{-1}(\cF)$ is a line bundle $\cL$ since $\Gamma(\cF) \cong \Hom(\coeffs_{T^2},\cF) \cong \Hom(\cO_{(1,1)}[-2], \cL).$ Taking singular support we obtain a family $\{ SS^\infty (i_{T^2_t}^*\cT_{\rho,I})\}$ of piecewise smooth Legendrians homeomorphic to $S^1$. 

Recall from Corollary \ref{cor:LegendrianGKS} that $K_{\{\Lambda^\rho_t\}}$ is defined by composing the equivalences
\begin{align*}\Sh^c_{\Lambda_0^\rho}(T^2) \xleftarrow[i_{T^2_0}^*]{\sim} \Sh^c_{\Lambda_I^\rho}(T^2 \times I) \xrightarrow[i_{T^2_1}^*]{\sim} \Sh^c_{\Lambda_1^\rho}(T^2). 
\end{align*}
On the other hand, proper pushforward along $m \times id_I: T^2 \times T^2 \times I \to T^2 \times I$
defines a pointwise convolution functor $\cT_{\rho,I} \star -: \Sh^c_{\Lambda_{\stackfano}}(T^2) \to \Sh^c(T^2 \times I)$. We claim this provides the inverse to the restriction $i_{T^2_0}^*$ above. This is equivalent to claiming that for any $\cF \in \Sh^c_{\Lambda_0^\rho}(T^2)$ the sheaf $\cT_{\rho,I} \star \cF$ has singular support on $\Lambda_I^\rho$ and satisfies $i_{T^2_0}^*(\cT_{\rho,I} \star \cF) \cong \cF$. The latter claim follows since convolution and $i_{T^2_0}^*$ commute by base change and since $i_{T^2_0}^* \cT_{\rho,I} \cong \coeffs_{\{0\}}$. 

The fact that $\cT_{\rho,I} \star \cF$ has singular support on $\Lambda_I^\rho$ will follow from the bound on singular support of a proper pushforward given by \cite[Prop. 5.4.4]{KS94}. Here it says that
\begin{equation}\label{eq:sscontainment}
\begin{aligned}
SS(\cT_{\rho,I} \star \cF) \subset &\{(p(m), n, t, \tau) \in T^*T^2 \times T^*I \text{ such that } \exists(p(m_1), n) \in SS(\cF),\\ &\quad  (p(m_2), n, t, \tau) \in SS(\cT_{\rho,I}) \text{ with }p(m_1+m_2)=p(m)\}.
\end{aligned}
\end{equation}
Let $\beta_\rho \in \rho$ be the generator of $\rho \cap \beta(\widehat{N}) \cong \bN$, i.e. the element of $\rho$ which pairs to 1 with $\chi_\rho$. Explicitly we can then write
\begin{gather*}
\Lambda^\rho_I = \left( \bigcup_{\stackrel{\rho' \in \Sigma(1)}{\rho' \neq \rho}}\bigcup_{n=0}^{|C_{\rho'}|-1} p((\rho')^\perp + n \chi_{\rho'}) \times [-\rho'] \times I \right) \cup \left( \bigcup_{n=0}^{|C_{\rho}|-1} p(\rho^\perp + (n+t) \chi_{\rho}) \times [-\beta_\rho + dt] \right).
\end{gather*} 

\begin{figure}
\begin{tikzpicture}
\newcommand*{\scl}{2}
\node (a) [matrix] at (0,0) {
\draw[thick, rotate=180] (\scl*1,\scl*0)--(\scl*1,\scl*2)--(\scl*3,\scl*2)--(\scl*3,\scl*0)--(\scl*1,\scl*0);
\draw[lefthairs, thick, blue, rotate=180] ($\scl*(1+.2,2)$)--($\scl*(1+.2,0)$);
\draw[lefthairs, thick, blue, rotate=180] ($\scl*(1.2+2*.333,2)$)--($\scl*(1+.2+2*.333,0)$);
\draw[lefthairs, thick, blue, rotate=180] ($\scl*(1.2+2*.666,2)$)--($\scl*(1+.2+2*.666,0)$);
\draw[lefthairs, thick, blue, rotate=180] ($\scl*(1,0.2)$)--($\scl*(3,0.2)$);
\draw[lefthairs, thick, blue, rotate=180] ($\scl*(1,1.2)$)--($\scl*(3,1.2)$);
\draw[lefthairs, thick, blue, rotate=180] ($\scl*(1.4,0)$)--($\scl*(1,0.4)$);
\draw[lefthairs, thick, blue, rotate=180] ($\scl*(3,0.4)$)--($\scl*(1.4,2)$);
\draw[lefthairs, thick, blue, rotate=180] ($\scl*(3,1.4)$)--($\scl*(2.4,2)$);
\draw[lefthairs, thick, blue, rotate=180] ($\scl*(2.4,0)$)--($\scl*(1,1.4)$);\\};
\node (b) [matrix] at (5.5,0) {
\draw[thick, rotate=180] (\scl*1,\scl*0)--(\scl*1,\scl*2)--(\scl*3,\scl*2)--(\scl*3,\scl*0)--(\scl*1,\scl*0);
\draw[lefthairs, thick, blue, rotate=180] ($\scl*(1.533,2)$)--($\scl*(1.533,0)$);
\draw[lefthairs, thick, blue, rotate=180] ($\scl*(1.533+2*.333,2)$)--($\scl*(1.533+2*.333,0)$);
\draw[lefthairs, thick, blue, rotate=180] ($\scl*(1.533+2*.666,2)$)--($\scl*(1.533+2*.666,0)$);
\draw[lefthairs, thick, blue, rotate=180] ($\scl*(1,0.2)$)--($\scl*(3,0.2)$);
\draw[lefthairs, thick, blue, rotate=180] ($\scl*(1,1.2)$)--($\scl*(3,1.2)$);
\draw[lefthairs, thick, blue, rotate=180] ($\scl*(1.4,0)$)--($\scl*(1,0.4)$);
\draw[lefthairs, thick, blue, rotate=180] ($\scl*(3,0.4)$)--($\scl*(1.4,2)$);
\draw[lefthairs, thick, blue, rotate=180] ($\scl*(3,1.4)$)--($\scl*(2.4,2)$);
\draw[lefthairs, thick, blue, rotate=180] ($\scl*(2.4,0)$)--($\scl*(1,1.4)$);\\};\node (c) [matrix] at (11,0) {
\draw[thick, rotate=180] (\scl*1,\scl*0)--(\scl*1,\scl*2)--(\scl*3,\scl*2)--(\scl*3,\scl*0)--(\scl*1,\scl*0);
\draw[lefthairs, thick, blue, rotate=180] ($\scl*(1+.2,2)$)--($\scl*(1+.2,0)$);
\draw[lefthairs, thick, blue, rotate=180] ($\scl*(1.2+2*.333,2)$)--($\scl*(1+.2+2*.333,0)$);
\draw[lefthairs, thick, blue, rotate=180] ($\scl*(1.2+2*.666,2)$)--($\scl*(1+.2+2*.666,0)$);
\draw[lefthairs, thick, blue, rotate=180] ($\scl*(1,0.2)$)--($\scl*(3,0.2)$);
\draw[lefthairs, thick, blue, rotate=180] ($\scl*(1,1.2)$)--($\scl*(3,1.2)$);
\draw[lefthairs, thick, blue, rotate=180] ($\scl*(1.4,0)$)--($\scl*(1,0.4)$);
\draw[lefthairs, thick, blue, rotate=180] ($\scl*(3,0.4)$)--($\scl*(1.4,2)$);
\draw[lefthairs, thick, blue, rotate=180] ($\scl*(3,1.4)$)--($\scl*(2.4,2)$);
\draw[lefthairs, thick, blue, rotate=180] ($\scl*(2.4,0)$)--($\scl*(1,1.4)$);\\};
\node (alabel) at (0,-2.7) {$\Lambda^\rho_0 := \Lambda_{\stackfano}$};
\node (blabel) at (5.5,-2.7) {$\Lambda^\rho_{t=\frac{1}{2}}$};
\node (clabel) at (11,-2.7) {$\Lambda^\rho_1 := \Lambda_{\stackfano}$};
\draw[arrowstyle] ($(a) + (2.2,0)$) to ($(b) + (-2.2,0)$);
\draw[arrhdstyle] ($(b) + (-2.2,0)$) to ($(b) + (-2.2,0)+(-1.2mm,-1.2mm)$);
\draw[arrhdstyle] ($(b) + (-2.2,0)$) to ($(b) + (-2.2,0)+(-1.2mm,1.2mm)$);
\draw[arrowstyle] ($(b) + (2.2,0)$) to ($(c) + (-2.2,0)$);
\draw[arrhdstyle] ($(c) + (-2.2,0)$) to ($(c) + (-2.2,0)+(-1.2mm,-1.2mm)$);
\draw[arrhdstyle] ($(c) + (-2.2,0)$) to ($(c) + (-2.2,0)+(-1.2mm,1.2mm)$);
\vspace{-50mm}
\end{tikzpicture}
\caption{The family $\Lambda^\rho_t$ for $\rho$ the horizontal ray (corresponding to the vertical blue strands) in a fan for a stacky $\bP^2$.}
\label{fig:lambdarhot}
\end{figure}
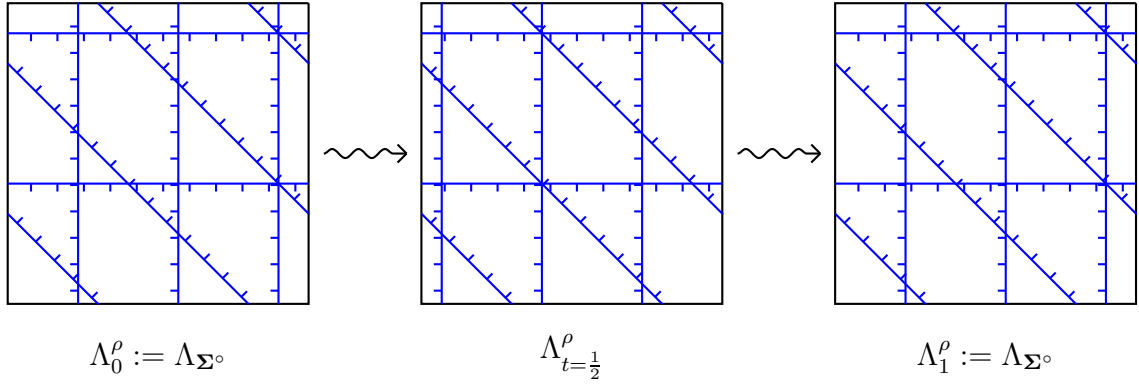

From this we observe that to show $SS(\cT_{\rho,I} \star \cF) \subset \Lambda_I^\rho$ it suffices to show that 
$$SS(\cT_{\rho,I}) \cap \big(T^2 \times \left(\bigcup_{\rho' \in \Sigma(1)} [-\rho']\right) \times T^*I\big) \subset \Lambda^\rho_I.$$
This follows since $1)$ the only covectors that appear in $SS(\cF) \subset \Lambda_{\stackfano}$ are of the form $[-\rho']$ for some $\rho' \in \Sigma(1)$, and $2)$ for any $t$ the subset $\bigcup_n p(\rho^\perp + (n+t) \chi_{\rho})$ is closed under multiplication by $\bigcup_n p(\rho^\perp + n \chi_{\rho})$ and for any $\rho'$ the subset $\bigcup_n p((\rho')^\perp + n \chi_{\rho'})$ is closed under multiplication by itself. 

One can now explicitly check that the left hand side of eq. \ref{eq:sscontainment} is in fact contained in 
\begin{equation*}
\left(\bigcup_{{\rho' \neq \rho, \rho_L, \rho_R}} \{0\} \times [-\rho'] \times I\right) \cup \left( \bigcup_{i \in \{L,R\}}p(\rho_i^\perp) \times [-\rho_i] \times I\right) \cup \big(p(\rho^\perp + t \chi_{\rho}) \times [-\beta_\rho + dt]\big),
\end{equation*}
which in turn is indeed contained in $\Lambda_{\rho,I}$ (it is straightforward to identify the left hand side more explicitly given the singular support of $\cT_{\rho,I}$, but the given bound is simpler to write and is sufficient). 

Now that we have shown $\cT_{\rho,I} \star -$ supplies the inverse to the restriction functor $i_{T^2_0}^*: \Sh^c_{\Lambda_I^\rho}(T^2 \times I) \to \Sh^c_{\Lambda_0^\rho}(T^2)$, the Proposition follows: by base change we have
$$K_{\{\Lambda^\rho_t\}} \cong i_{T^2_1}^*(\cT_{\rho,I} \star -) \cong i_{T^2_1}^*(\cT_{\rho,I}) \star - \cong \cT_\rho \star -,$$
and as explained at the beginning of the proof the functor on the right is intertwined with $i^*\cL_\rho \otimes -$ by the $CCC$.
\end{proof}

\section{Coarse moduli spaces and Legendrian degenerations}\label{sec:satellites}
\newcommand{\leftparen}{(}    
\newcommand{\rightparen}{)}  
\newcommand{\leftbracket}{[}
\newcommand{\rightbracket}{]}
\newcommand{\zeroone}{\leftbracket 0,1\rightparen}  
\newcommand{\zo}{\zeroone}
\newcommand{\Cyl}{\mathrm{Cyl}}

Suppose $\Gamma$ is a consistent bipartite graph whose Newton polygon has non-primitive edges. To obtain a faithful mirror description of $\Sh_{\La_\Gamma}(T^2)$ one must consider the toric stack $\cX_{\stackfano}$, which in this case is not isomorphic to its coarse moduli space, the toric variety $X_{\Sigmao}$. That is, the pushforward $\pi_*: \Coh(\cX_{\stackfano}) \to \Coh(X_{\Sigmao})$ is not fully faithful. In this section we explicitly describe the mirror functor $\Sh^w_{\Lambda_{\stackfano}}(T^2) \to \Sh^w_{\Lambda_{\Sigmao}}(T^2)$ as an example of a general class of functors associated to Legendrian satellites and degenerations. This allows one to reformulate the main content of Theorem \ref{thm:mainresult} purely in terms of ordinary varieties rather than stacks. Moreover, it illuminates the result of \cite{GK} that the cluster integrable systems considered in loc. cit. are in general finite covers of those considered by e.g. Beauville: the former are directly related to the stack $\cX_{\stackfan}$, the latter to the variety $X_{\Sigma}$.

We call a family $\{\Lambda_t\}_{t\in I}$ of Legendrians in $T^\infty M$ a Legendrian degeneration if 
\begin{enumerate}
\item the total space of the family in $T^*M \times I$ is closed, and
\item the family is an isotopy of smooth Legendrians for $t \in [0,1) \subset I$.
\end{enumerate}
In particular $\Lambda_1$ need not be homeomorphic to $\Lambda_0$ nor even smooth, though it will be smooth in our main example:  effectively, we generalize the notion of a Legendrian isotopy to allow more complicated behavior at $t=1$.

A special case of this notion is that of a Legendrian satellite. Suppose $p: K \onto K'$ is a covering space map of compact 1-manifolds, and $\Cyl(p)$ the mapping cylinder
\[
\Cyl(p) = ((I \times K) \amalg K' )/\sim \qquad (1,k) \sim p(k) \text{ for } k \in K.
\]
Then $\{\Lambda_t\}_{t\in I}$ is a Legendrian satellite if its total space in $T^*M \times I$ is the image of an embedding of $\Cyl(p)$ compatible with its projection to $I$. More precisely, for $t$ sufficiently close to $1$ the link $\Lambda_t$ lives in a tubular neighborhood of $\Lambda_1$ and is a Legendrian satellite of $\Lambda_1$ in the sense of e.g. \cite{Ng01}. The family $\{\Lambda_t\}_{t \in I}$ can then be thought of as a way of recording a particular realization of $\Lambda_t$ as a satellite.

To a Legendrian degeneration we may associate a functor
$$ K_{\{\Lambda_t\}}: \Sh_{\Lambda_0}(M) \to \Sh_{\Lambda_1}(M) $$
which generalizes the GKS equivalence attached to a Legendrian isotopy (though is no longer an equivalence in general). Recall as in Corollary \ref{cor:LegendrianGKS} that restriction from $M \times [0,1)$ to $M \times \{0\}$ yields an equivalence $\Sh_{\Lambda_{[0,1)}}(M \times [0,1)) \congto \Sh_{\Lambda_0}(M)$, where $\Lambda_{[0,1)} \subset T^*(M \times [0,1))$ is the Legendrian which lifts the isotopy $\{\Lambda_{t}\}_{t \in [0,1)}$. We now define $K_{\{\Lambda_t\}}$ to be the composition 
\begin{equation}\label{eq:blurdef} 
\Sh_{\Lambda_0}(M) \xrightarrow[(i_0^*)^{-1}]{\sim} \Sh_{\Lambda_{[0,1)}}(M \times [0,1)) \xrightarrow[(id_M \times i_{[0,1)})_*]{} \Sh(M \times I) \xrightarrow[(id_M \times i_1)^*]{} \Sh(M). 
\end{equation}

\begin{lemma}
\cite[Prop. 2.12]{Zho18} The composition (\ref{eq:blurdef}) takes values in $\Sh_{\Lambda_1}(M)$, hence defines a functor $K_{\{\Lambda_t\}}: \Sh_{\Lambda_0}(M) \to \Sh_{\Lambda_1}(M)$.
\end{lemma}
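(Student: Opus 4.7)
The plan is to bound the microsupport of the composition in \eqref{eq:blurdef} by analyzing its three constituent functors in turn. Write $j := id_M \times i_{[0,1)}$ for the open inclusion $M \times [0,1) \hookrightarrow M \times I$ and $\iota := id_M \times i_1$ for the closed inclusion $M \times \{1\} \hookrightarrow M \times I$. Given $\cF \in \Sh_{\Lambda_0}(M)$, set $\tilde{\cF} := (i_0^*)^{-1}(\cF)$, a sheaf on $M \times [0,1)$ whose singular support is contained in the cone over the smooth Legendrian $\Lambda_{[0,1)}$ (by condition (2) of the definition). My goal is to show $SS(\iota^* j_* \tilde{\cF}) \subset$ cone over $\Lambda_1$.

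The key geometric input comes from condition (1) of the definition: the total family $\bigcup_t \Lambda_t \times \{t\}$ is closed in $T^*M \times I$. Consequently the closure $\overline{\Lambda_{[0,1)}}$ in $T^*(M \times I)$ satisfies
\[ \overline{\Lambda_{[0,1)}} \cap T^*(M \times I)|_{t=1} \;\subset\; (\text{cone over } \Lambda_1) + \bR \cdot dt, \]
where the sum denotes fiberwise addition of covectors at each point of $M \times \{1\}$. Thus the only new covectors that can enter at $t=1$ lie in the pure $dt$-direction; no new directions outside the cone over $\Lambda_1$ can appear after projecting to $T^*M$.

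The microlocal argument then proceeds in two steps, applying standard singular-support estimates \cite[Sec. 5.4]{KS94}. First, $SS(j_* \tilde{\cF})$ is bounded by the union of $\overline{SS(\tilde{\cF})}$ and the conormal $T^*_{M \times \{1\}}(M \times I)$, since $j$ is an open inclusion with closed complement $M \times \{1\}$. Second, under pullback by the closed inclusion $\iota$, the $dt$-component of each covector is dropped and the purely conormal contributions are killed; therefore $SS(\iota^* j_* \tilde{\cF})$ is contained in the projection of $\overline{SS(\tilde{\cF})} \cap T^*(M \times I)|_{t=1}$ onto $T^*M$, which by the previous paragraph lies in the cone over $\Lambda_1$.

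The main obstacle lies in controlling the closure at $t=1$: the family may degenerate non-smoothly there, so pure $dt$-covectors can enter $\overline{SS(\tilde{\cF})}$, reflecting for instance rapid motion of $\Lambda_t$ as $t \to 1$. Fortunately these are precisely the directions killed by $\iota^*$, so they do not contribute to the output of $K_{\{\Lambda_t\}}$. Condition (1) in the definition of Legendrian degeneration is calibrated exactly to ensure that nothing else sneaks in through the closure at $t=1$.
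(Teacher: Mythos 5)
Your proposal follows the same architecture as the paper's proof (bound the microsupport of the open pushforward, use closedness of the total family to control which directions survive at $t=1$, then apply the intersect-and-project bound for restriction to $M\times\{1\}$), but the key estimate in your first step is misquoted, and this propagates into your final containment. The microsupport of $j_*\tilde{\cF}$ is \emph{not} bounded by $\overline{SS(\tilde{\cF})}\cup T^*_{M\times\{1\}}(M\times I)$; the correct bound (the one the paper invokes via \cite[Thm.~6.3.1]{KS94}) is the sum $SS(\tilde{\cF})\mathbin{\widehat{+}}\bR_{\geq 0}\,dt$, whose elements are limits of covectors $\xi_k+\lambda_k\,dt$ with $\xi_k\in SS(\tilde{\cF})$ and $\lambda_k\geq 0$. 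Such mixed covectors (nonzero $T^*M$-component \emph{together with} a nonzero $dt$-component not realized by the isotopy) generically do occur over $t=1$ and lie in neither term of your union, so your statement "the only new covectors at $t=1$ are pure $dt$" is not accurate. For the same reason your conclusion that $SS(\iota^*j_*\tilde{\cF})$ is contained in the projection of $\overline{SS(\tilde{\cF})}\cap T^*(M\times I)|_{t=1}$ is stronger than what the estimate yields: if the $dt$-components $\tau_k$ of a sequence in $SS(\tilde{\cF})$ blow up, the sequence has no limit in the closure (after rescaling its limit has vanishing $T^*M$-part), yet its $T^*M$-components may still converge to a nonzero covector contributing to $SS(j_*\tilde{\cF})$ at $t=1$.

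The repair is immediate and is exactly how the paper argues: since the conormal summand $\lambda_k\,dt$ has vanishing $T^*M$-component, the $T^*M$-direction $[n]$ of any covector of $SS(j_*\tilde{\cF})$ over $t=1$ is a limit of directions $[n_k]$ with $[n_k]\in\Lambda_{t_k}$, $t_k\to 1$; closedness of the total family (together with compactness of the fibers of $T^\infty M$) then forces $[n]\in\Lambda_1$, and the intersect-and-project bound for $\iota^*$ finishes the argument just as in your second step. So the idea content of your proof matches the paper's; you should simply replace the union bound by the $\widehat{+}$ bound and phrase the conclusion in terms of the $T^*M$-direction $[n]$ (rather than membership of the full covector in $\overline{SS(\tilde{\cF})}$), since only that direction is needed after the $dt$-component is projected away.
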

\begin{proof}
Suppose $\cF \in \Sh_{\Lambda_{[0,1)}}(M \times [0,1))$. By \cite[Thm.~6.3.1]{KS94} if $(p(m), [n], 1, \tau)$ is a point of $SS((id_M \times i_{[0,1)})_*\cF)$ then for some $\tau' \in T^*_1I$ the point $(p(m), [n], 1, \tau')$ is in the closure of $SS(\cF) \subset T^*(M \times [0,1))$ in $T^*(M \times I)$. Since the total space of $\{\Lambda_t\}_{t \in I}$ is closed it follows that $[n] \in \Lambda_1$. On the other hand, restriction to $M \times \{1\}$ acts on singular support by intersecting with $T^*M \times T^*_1 I$ then projecting to $T^*M$ \cite[Prop. 5.4.5]{KS94}, hence the result follows. 
\end{proof}

\begin{example}
The following figure illustrates a Legendrian degeneration of a $3$-strand braid to a single strand.  An object of $\Sh_{\Lambda_0}$ and its image under \eqref{eq:blurdef} are indicated on the left and right.
\vspace{4mm}
\begin{center}
	\begin{tikzpicture}
	\draw[righthairs,blue,thick,rounded corners] (-2.2,1.5)--(-1.75,1.5)--(.5,-1)--(2.2,-1);
	\draw[lefthairs,blue,thick,rounded corners] (2.2,1.5)--(1.75,1.5)--(-.5,-1)--(-2.2,-1);
	\draw[righthairs,blue,thick] (0,.5)--(2.2,.5);
	\draw[lefthairs,blue,thick] (0,.5)--(-2.2,.5);
	\node at (1.5,-.3) {$B$};
	\node at (-1.5,-.3) {$A$};
	\node at (0,-1.1) {$C$};
	\node at (0,.1) {$D$};
	\node at (-1.8,.9) {$E$};
	\node at (1.8,.9) {$F$};
	\node at (0,1) {$G$};
	\end{tikzpicture}
	\qquad
	\begin{tikzpicture}
	\draw[righthairs,blue,thick,rounded corners] (-2.2,1.5/3)--(-1.75,1.5/3)--(.5,-1/3)--(2.2,-1/3);
	\draw[lefthairs,blue,thick,rounded corners] (2.2,1.5/3)--(1.75,1.5/3)--(-.5,-1/3)--(-2.2,-1/3);
	\draw[righthairs,blue,thick] (0,.5/3)--(2.2,.5/3);
	\draw[lefthairs,blue,thick] (0,.5/3)--(-2.2,.5/3);
	
	\node at (0,-1) {$C$};
	\node at (0,1) {$G$};
	\end{tikzpicture}
	\qquad
	\begin{tikzpicture}
	\draw[righthairs,blue,ultra thick] (-2,0)--(2,0);
	\node at (0,-.5) {$C$};
	\node at (0,.5) {$G$};
	\node at (0,-1.5) {};
	\end{tikzpicture}
\end{center}
In the sheaf on the left, the several ways of defining a map from $C \to G$ by composing strand-crossing maps ($C \to B \to E \to G$ and $C \to A \to F \to G$, etc.) coincide; this composition supplies the strand-crossing map in the degenerated sheaf on the right.  The thicker paintbrush used in the right part of the figure figure conveys (to us) a sense in which a Legendrian degeneration $\Lambda_1$ is obtained by ``blurring'' some of the features of $\Lambda_0$ --- sheaves on $\Lambda_1$ are likewise obtained by blurring some of the features of sheaves on $\Lambda_0$.
\end{example}

Now we return to the setting of previous sections, with $M \cong \bZ^2$, $\stackfan$ a stacky fan, and $\Sigma$ its underlying ordinary fan. We do, however, allow $\stackfan$ to be non-complete. We have the Legendrian links $\Lambda_{\stackfano}$ and $\Lambda_{\Sigmao}$ in $T^\infty T^2$, the latter being a closed subset of the former. We realize $\Lambda_{\stackfano}$ as a Legendrian satellite of $\Lambda_{\Sigmao}$ as described below. Informally, each component of $\Lambda_{\stackfano}$
 is isotopic to a unique component of $\Lambda_{\Sigmao},$ and moves towards it at uniform speed as $t$ goes from $0$ to $1$, colliding with it at $t=1$.

 Recall again that the components of $\Lambda_{\stackfano}$ are in correspondence with pairs of a ray $\rho \in \Sigma(1)$ and an element $[\chi] \in C_\rho$, where $C_\rho$ is the cokernel of the natural map $M \to M_\rho$. As in Section \ref{sec:isotopies}, given a ray $\rho$ we let $\chi_\rho \in M_\rho$ be the generator whose value on a generator of $\rho \cap N$ is positive. We use a splitting $M_\rho \otimes \bR \into M_\bR$ of $M_\bR \to M_\rho \otimes \bR$ to regard $\chi_\rho$ as a point in $M_\bR$ such that $|C_\rho|\chi_\rho \in M$. We now consider the family 
\begin{equation}
\label{eq:Lambda_t}
\Lambda_t := \bigcup_{\rho \in \Sigma(1)} \bigcup_{n = 0}^{|C_\rho|-1} p(\rho^\perp + n t\chi_\rho) \times [-\rho].
\end{equation}
As before, we have identified $T^\infty T^2$ with $T^2 \times (N_\bR \smallsetminus \{0\})/\bR_+$, and given a subset $S$ of $N_\bR$ we write $[S]$ for its image in $(N_\bR \smallsetminus \{0\})/\bR_+$. 
We have $\Lambda_0 = \Lambda_{\stackfano}$ and $\Lambda_1 = \Lambda_{\Sigmao}$, while $\Lambda_t$ is isotopic to $\Lambda_{\stackfano}$ for $t \in (0,1)$. As a satellite this is somewhat trivial: for $t$ close to $1$ the link $\Lambda_t$ meets a tubular neighborhood of the component of $\Lambda_{\Sigmao}$ attached to $\rho$ along an unlink with $|C_\rho|$ strands. The main result of this section is to identify the counterpart of the associated degeneration functor 
$$ K_{\{\Lambda_t\}}: \Sh_{\Lambda_{\stackfano}}(T^2) \to \Sh_{\Lambda_{\Sigmao}}(T^2)$$
under the coherent-constructible correspondence.

\begin{figure}
\begin{tikzpicture}
\newcommand*{\scl}{2}
\node (a) [matrix] at (0,0) {
\draw[thick, rotate=180] (\scl*1,\scl*0)--(\scl*1,\scl*2)--(\scl*3,\scl*2)--(\scl*3,\scl*0)--(\scl*1,\scl*0);
\draw[lefthairs, thick, blue, rotate=180] ($\scl*(1+.2,2)$)--($\scl*(1+.2,0)$);
\draw[lefthairs, thick, blue, rotate=180] ($\scl*(1.2+2*.333,2)$)--($\scl*(1+.2+2*.333,0)$);
\draw[lefthairs, thick, blue, rotate=180] ($\scl*(1.2+2*.666,2)$)--($\scl*(1+.2+2*.666,0)$);
\draw[lefthairs, thick, blue, rotate=180] ($\scl*(1,0.2)$)--($\scl*(3,0.2)$);
\draw[lefthairs, thick, blue, rotate=180] ($\scl*(1,1.2)$)--($\scl*(3,1.2)$);
\draw[lefthairs, thick, blue, rotate=180] ($\scl*(1.4,0)$)--($\scl*(1,0.4)$);
\draw[lefthairs, thick, blue, rotate=180] ($\scl*(3,0.4)$)--($\scl*(1.4,2)$);
\draw[lefthairs, thick, blue, rotate=180] ($\scl*(3,1.4)$)--($\scl*(2.4,2)$);
\draw[lefthairs, thick, blue, rotate=180] ($\scl*(2.4,0)$)--($\scl*(1,1.4)$);\\};
\node (b) [matrix] at (5.5,0) {
\draw[thick, rotate=180] (\scl*1,\scl*0)--(\scl*1,\scl*2)--(\scl*3,\scl*2)--(\scl*3,\scl*0)--(\scl*1,\scl*0);
\draw[lefthairs, thick, blue, rotate=180] ($\scl*(1+.2,2)$)--($\scl*(1+.2,0)$);
\draw[lefthairs, thick, blue, rotate=180] ($\scl*(1.2+2*.25*.333,2)$)--($\scl*(1+.2+2*.25*.333,0)$);
\draw[lefthairs, thick, blue, rotate=180] ($\scl*(1.2+2*.25*.666,2)$)--($\scl*(1+.2+2*.25*.666,0)$);
\draw[lefthairs, thick, blue, rotate=180] ($\scl*(1,0.2)$)--($\scl*(3,0.2)$);
\draw[lefthairs, thick, blue, rotate=180] ($\scl*(1,1.2-.75)$)--($\scl*(3,1.2-.75)$);
\draw[lefthairs, thick, blue, rotate=180] ($\scl*(1.4,0)$)--($\scl*(1,0.4)$);
\draw[lefthairs, thick, blue, rotate=180] ($\scl*(3,0.4)$)--($\scl*(1.4,2)$);
\draw[lefthairs, thick, blue, rotate=180] ($\scl*(3,0.4-.25)$)--($\scl*(1.4-.25,2)$);
\draw[lefthairs, thick, blue, rotate=180] ($\scl*(2.4+.75-2,0)$)--($\scl*(1,1.4+.75-2)$);\\};
\node (c) [matrix] at (11,0) {
\draw[thick, rotate=180] (\scl*1,\scl*0)--(\scl*1,\scl*2)--(\scl*3,\scl*2)--(\scl*3,\scl*0)--(\scl*1,\scl*0);
\draw[lefthairs, thick, blue, rotate=180] ($\scl*(1+.2,2)$)--($\scl*(1+.2,0)$);
\draw[lefthairs,thick, blue, rotate=180] ($\scl*(1,0.2)$)--($\scl*(3,0.2)$);
\draw[lefthairs, thick, blue, rotate=180] ($\scl*(1.4,0)$)--($\scl*(1,0.4)$);
\draw[lefthairs, thick, blue, rotate=180] ($\scl*(3,0.4)$)--($\scl*(1.4,2)$);
\\};
\node (alabel) at (0,-2.7) {$\Lambda_0 := \Lambda_{\stackfano}$};
\node (blabel) at (5.5,-2.7) {$\Lambda_{t}$};
\node (clabel) at (11,-2.7) {$\Lambda_1 := \Lambda_{\Sigmao}$};
\draw[arrowstyle] ($(a) + (2.2,0)$) to ($(b) + (-2.2,0)$);
\draw[arrhdstyle] ($(b) + (-2.2,0)$) to ($(b) + (-2.2,0)+(-1.2mm,-1.2mm)$);
\draw[arrhdstyle] ($(b) + (-2.2,0)$) to ($(b) + (-2.2,0)+(-1.2mm,1.2mm)$);
\draw[arrowstyle] ($(b) + (2.2,0)$) to ($(c) + (-2.2,0)$);
\draw[arrhdstyle] ($(c) + (-2.2,0)$) to ($(c) + (-2.2,0)+(-1.2mm,-1.2mm)$);
\draw[arrhdstyle] ($(c) + (-2.2,0)$) to ($(c) + (-2.2,0)+(-1.2mm,1.2mm)$);
\vspace{-50mm}
\end{tikzpicture}
\caption{The Legendrian degeneration $\Lambda_{\stackfano} \to \Lambda_{\Sigmao}$ for a stacky $\bP^2$ with stabilizers of cardinality 3, 2, and 2 on its toric divisors.}
\label{fig:blurringfig}
\end{figure}

\begin{theorem}
The coherent-constructible correspondence intertwines $K_{\{\Lambda_t\}}$ with the pushforward $\pi_*: \Coh(\cX_{\stackfano}) \to \Coh(X_{\Sigmao})$. That is, $K_{\{\Lambda_t\}}$ restricts to a functor of wrapped sheaf categories and we have a commuting diagram of functors
\[
\begin{tikzpicture}
[thick,>=\arrtip]
\node (a) at (0,0) {$\Coh(\cX_{\stackfano})$};
\node (b) at (5,0) {$\Sh^w_{\Lambda_{\stackfano}}(T^2)$};
\node (c) at (0,-2) {$\Coh(X_{\Sigmao})$};
\node (d) at (5,-2) {$\Sh^w_{\Lambda_{\Sigmao}}(T^2).$};
\draw[->] (a) to node[above] {$\CCC_{\cX_{\stackfano}}$} node[below] {$\sim$} (b);
\draw[->] (b) to node[right] {$K_{\{\Lambda_t\}}$} (d);
\draw[->] (a) to node[left] {$\pi_*$}(c);
\draw[->] (c) to node[above] {$\CCC_{X_{\Sigmao}}$} node[below] {$\sim$} (d);
\end{tikzpicture}
\]
\end{theorem}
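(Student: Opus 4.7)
The plan is to prove the theorem by a combination of structural observations reducing to a computation on a finite set of generators.

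\emph{Step 1: Both functors are module functors.} On the coherent side, the projection formula gives $\pi_*(F \otimes \pi^* G) \cong \pi_* F \otimes G$, so $\pi_*$ is $\Coh(X_{\Sigmao})$-linear, viewing $\Coh(\cX_{\stackfano})$ as a $\Coh(X_{\Sigmao})$-module via $\pi^*$. On the constructible side, I would first identify the mirror of $\pi^*$ with the natural fully faithful inclusion
\begin{equation*}
\Sh^w_{\Lambda_{\Sigmao}}(T^2) \hookrightarrow \Sh^w_{\Lambda_{\stackfano}}(T^2)
\end{equation*}
arising from $\Lambda_{\Sigmao} \subset \Lambda_{\stackfano}$. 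This inclusion is monoidal since the conic Lagrangian $L_{\Sigmao}$ is closed under fiberwise addition of covectors, and so by uniqueness of monoidal functors sending structure sheaf to structure sheaf it agrees with $\pi^*$ under the CCC. Then, I would show that $K_{\{\Lambda_t\}}$ is $\Sh^w_{\Lambda_{\Sigmao}}(T^2)$-linear: for $\cF \in \Sh^w_{\Lambda_{\stackfano}}(T^2)$ and $\cG \in \Sh^w_{\Lambda_{\Sigmao}}(T^2)$, there is a natural isomorphism $K_{\{\Lambda_t\}}(\cF \star \cG) \cong K_{\{\Lambda_t\}}(\cF) \star \cG$. This will follow because convolution by $\cG$ commutes with each of the three operations defining $K_{\{\Lambda_t\}}$ (isotopy extension, proper pushforward along $T^2 \times [0,1) \hookrightarrow T^2 \times I$, restriction to $t=1$) by base change, provided the singular support bound of \cite[Prop.~5.4.4]{KS94} ensures convolution by $\cG$ preserves the Legendrian degeneration's total singular support $\Lambda_I$; the latter holds because $\Lambda_{\Sigmao}$ is closed under fiberwise addition and is contained in each $\Lambda_t$.

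\emph{Step 2: Reduction to generators.} Given these linearities, it suffices to verify the theorem on a set of objects generating $\Coh(\cX_{\stackfano})$ as a $\Coh(X_{\Sigmao})$-module. A convenient such set consists of the structure sheaf $\cO_{\cX_{\stackfano}}$ together with the root line bundles $\{\cL_\rho^n : \rho \in \Sigma(1),\, 0 \le n < |C_\rho|\}$ introduced in Section \ref{sec:isotopies}, since the Picard group of $\cX_{\stackfano}$ modulo $\pi^* \Pic(X_{\Sigmao})$ is generated by the classes of these root bundles.

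\emph{Step 3: Verification on generators.} On the coherent side, $\pi_* \cO_{\cX_{\stackfano}} \cong \cO_{X_{\Sigmao}}$, and for each $0 \le n < |C_\rho|$, the pushforward $\pi_* \cL_\rho^n$ can be computed locally at the stacky divisor $D_\rho$ as the $\mu_{|C_\rho|}$-invariant part of a character line bundle, yielding an explicit rank-one sheaf on $X_{\Sigmao}$. On the constructible side, $\CCC_{\cX_{\stackfano}}(\cL_\rho^n)$ is a twisted polytope sheaf analogous to the $\cT_\rho$ analyzed in the proof of Proposition \ref{prop:mirrorisotopies}. I would then trace the effect of the degeneration $\{\Lambda_t\}$ by realizing $K_{\{\Lambda_t\}}$ locally as convolution with a family of kernels, in direct parallel with the construction of $\cT_{\rho,I}$ in the proof of Proposition \ref{prop:mirrorisotopies}.

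\emph{Main obstacle.} The main difficulty lies in Step 3 for the nontrivial roots $0 < n < |C_\rho|$. In the local model where $\pi$ restricts to the coarse moduli map $[\bA^1/\mu_{|C_\rho|}] \to \bA^1$, one must check that the strand-collapse component of the degeneration sends $\CCC(\cL_\rho^n)$ to the constructible mirror of $\pi_* \cL_\rho^n$. This amounts to verifying, via the singular support calculus, that the combinatorics of how the $|C_\rho|$ parallel Legendrian strands merge at $t=1$ exactly reproduces the representation-theoretic content of taking $\mu_{|C_\rho|}$-invariants; one expects the nontrivial-character summands to be absorbed into the pushforward limit in a way mirroring the vanishing of characters under averaging.
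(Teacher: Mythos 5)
Your Step 1 is sound and in fact reproduces the paper's key technical lemma: the identification of the mirror of $\pi^*$ with the inclusion $\Sh^w_{\Lambda_{\Sigmao}}(T^2) \into \Sh^w_{\Lambda_{\stackfano}}(T^2)$ (though your appeal to ``uniqueness of monoidal functors sending structure sheaf to structure sheaf'' is not an argument --- the paper cites \cite[Prop.~9.3]{Kuw16} for this), and the commutation of $K_{\{\Lambda_t\}}$ with convolution by objects microsupported on $\Lambda_{\Sigmao}$, proved exactly as in the paper via \cite[Prop.~5.4.4]{KS94} and base change, using that the front of $\Lambda_t$ over a ray $\rho$ is closed under translation by $p(\rho^\perp)$. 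The gap is in Steps 2--3, which is where the actual content lies. First, Step 3 is not carried out: the verification that the strand collapse sends $\CCC_{\cX_{\stackfano}}(\cL_\rho^n)$ to the mirror of $\pi_*\cL_\rho^n$ for $0<n<|C_\rho|$ is precisely the ``representation-theoretic'' matching you flag as the main obstacle, and expressing the expectation that nontrivial characters ``are absorbed'' is not a proof; as it stands the decisive computation is missing. Second, even granting it, Step 2 does not suffice as stated: checking that two ($\Coh(X_{\Sigmao})$-linear) functors agree object-by-object on a set of module generators does not yield an isomorphism of functors --- you would need to construct a natural transformation between $\CCC_{X_{\Sigmao}}\circ\pi_*$ and $K_{\{\Lambda_t\}}\circ\CCC_{\cX_{\stackfano}}$ (or compare module functors at the level of the endomorphism algebra of the generator), and no such map is produced.

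The paper avoids the twisted-sector computation entirely by an adjunction argument, and you could repair your plan by adopting it: since $\pi_*$ is right adjoint to $\pi^*$, and the mirror of $\pi^*$ is the inclusion, it suffices to show $K_{\{\Lambda_t\}}$ is right adjoint to the inclusion $\Sh_{\Lambda_{\Sigmao}}(T^2)\into\Sh_{\Lambda_{\stackfano}}(T^2)$. In the single-ray local model this is immediate and requires no case-by-case check on line bundles: there is an explicit continuous retraction $\psi\colon T^2\to T^2$ collapsing an annulus containing the front of $\Lambda_{\stackfano}$ onto the geodesic front of $\Lambda_{\Sigmao}$, and one observes $K_{\{\Lambda_t\}}\cong\psi_*$ while the inclusion is $\psi^*$, so the adjunction is the standard $(\psi^*,\psi_*)$ adjunction. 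The general case then follows by Zariski descent over the single-ray charts $\cX_{\stackfan_\rho}$: $\pi_*$ is determined by its compatibility with the restrictions to these charts, and on the constructible side the corresponding compatibility is exactly the commutation of $K_{\{\Lambda_t\}}$ with convolution by $\Theta_\rho := \CCC_{X_{\Sigma_\rho}}(\cO_{X_{\Sigma_\rho}})$ --- i.e.\ the statement your Step 1 already proves. So your linearity lemma is the right ingredient, but it should feed a descent-plus-adjunction argument rather than a pointwise check on root bundles, which in the present form is both incomplete (Step 3) and logically insufficient (Step 2).
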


\begin{proof}
We begin by recalling that since $\pi$ is proper the coherent-constructible correspondence intertwines the pullback $\pi^*: \Coh(X_{\Sigmao}) \into \Coh(X_{\stackfano})$ with the trivial inclusion $id^*_{T^2}: \Sh^w_{\Lambda_{\Sigmao}}(T^2) \into \Sh^w_{\Lambda_{\stackfano}}(T^2)$ (by \cite[Prop. 9.3]{Kuw16} in the present generality, following \cite{FLTZ, Tre10, SS14}). That is, we have a diagram
\[
\begin{tikzpicture}
[thick,>=\arrtip]
\node (a) at (0,0) {$\Coh(\cX_{\stackfano})$};
\node (b) at (5,0) {$\Sh^w_{\Lambda_{\stackfano}}(T^2)$};
\node (c) at (0,-2) {$\Coh(X_{\Sigmao})$};
\node (d) at (5,-2) {$\Sh^w_{\Lambda_{\Sigmao}}(T^2)$};
\draw[->] (a) to node[above] {$\CCC_{\cX_{\stackfano}}$} node[below] {$\sim$} (b);
\draw[<-right hook] (b) to node[right] {$id^*_{T^2}$} (d);
\draw[<-right hook] (a) to node[left] {$\pi^*$}(c);
\draw[->] (c) to node[above] {$\CCC_{X_{\Sigmao}}$} node[below] {$\sim$} (d);
\end{tikzpicture}
\]

A priori $K_{\{\Lambda_t\}}$ gives a functor between the large sheaf categories $\Sh_{\Lambda_{\stackfano}}(T^2)$ and $\Sh_{\Lambda_{\Sigmao}}(T^2)$. The above diagram extends to one involving these categories on the right and $\IndCoh$ on the left. On the other hand, since $\pi_*$ is the right adjoint of $\pi^*$, the Theorem follows once we establish the corresponding adjunction between $K_{\{\Lambda_t\}}$ and $id^*_{T^2}$ --- in particular it will follow that $K_{\{\Lambda_t\}}$ preserves wrapped sheaf categories since $\pi_*$ preserves $\Coh$. We will show this adjunction directly in the degenerate case when $\Sigma$ has a single ray, then derive the general case by reducing to an affine cover.

When $\Sigma$ has a single ray $\rho$, consider the following continuous map $\psi: T^2 \to T^2$. Any point in $T^2$ can be written as $p(x + y \chi_\rho)$ for some $x \in \rho^\perp_0$ and a unique $y \in \bR$ with $0 \leq y < |C_\rho|$. We then define $\psi$ so that on such a point we have
\begin{equation*}
\psi(p(x + y \chi_\rho)) = \begin{cases} p(x + |C_\rho|(y+1-|C_\rho|)\chi_\rho) & |C_\rho|-1 \leq y <|C_\rho| \\
p(x) & 0 \leq y < |C_\rho|-1. \end{cases}
\end{equation*}
That is, $\psi$ retracts an annulus containing the front projection of $\Lambda_{\stackfano}$ onto the front projection of $\Lambda_{\Sigmao}$ (which consists of a single geodesic). The needed adjunction now follows from the straightforward observation that $K_{\{\Lambda_t\}} \cong \psi_*$, while the trivial inclusion is isomorphic to $\psi^*$.

For general $\Sigma$ we denote by $\Sigma_\rho$ the subfan consisting of $\{0\}$ and a single ray $\rho \in \Sigma(1)$, similarly for $\stackfan_\rho$. Consider the diagram formed by the dg categories $\IndCoh(\cX_{\stackfan_\rho})$ together with their restriction functors to $\IndCoh(T_N)$. By Zariski descent $\IndCoh(\cX_{\stackfano})$ is the limit of this diagram, similarly for $\IndCoh(X_{\Sigmao})$. In particular, the functor $\pi_*: \IndCoh(\cX_{\stackfano}) \to\IndCoh(X_{\Sigmao})$ is completely determined by the fact that under restriction it is intertwined with the local pushforwards $\IndCoh(\cX_{\stackfano_\rho}) \to\IndCoh(X_{\Sigmao_\rho})$ in a way further compatible with restriction to $T_N$. In particular, we have a commutative diagram
\[
\begin{tikzpicture}[thick,>=\arrtip]
\newcommand*{\width}{4.5}; \newcommand*{\hta}{2}; \newcommand*{\htb}{2};
\node (left) [matrix] at (8,0) {
	\node (a) at (0,0) {$\IndCoh(\cX_{\stackfano})$};
	\node (b) at (\width,0) {$\IndCoh(X_{\Sigmao})$};
	\node (c) at (0,-\hta) {$\IndCoh(\cX_{\stackfan_\rho})$};
	\node (d) at (\width,-\hta) {$\IndCoh(X_{\Sigma_\rho}).$};
	\draw[->] (a) to node[above] {$\pi_*$} (b);
	\draw[->] (c) to node[above] {$\pi_*$} (d);
	\draw[->] (a) to node[left] {$i_{\cX_{\stackfan_\rho}}^*$} (c); 
	\draw[->] (b) to node[right] {$i_{X_{\Sigma_\rho}}^*$} (d);
	\\};
\end{tikzpicture}
\]
To show that the CCC identifies $\pi_*$ and $K_{\{\Lambda_t\}}$ it thus suffices to show that $K_{\{\Lambda_t\}}$ has the corresponding compatibilities on the constructible side of the CCC, since we have shown the CCC identifies pushforward and degeneration for each $\stackfan_\rho$. 

More explicitly, we denote by $K_{\{ \Lambda_t^\rho\}}: \Sh_{\Lambda_{\stackfan_\rho}}(T^2) \to \Sh_{\Lambda_{\Sigma_\rho}}(T^2)$ the  degeneration functor corresponding to a single $\rho \in \Sigmao$. We also set $\Theta_\rho := CCC_{X_{\Sigma_\rho}}^{-1}(\cO_{X_{\Sigma_\rho}}) \in \Sh_{\Lambda_{\Sigma_\rho}}(T^2)$. The restriction functors from $\cX_{\stackfano}$ to $\cX_{\stackfan_\rho}$ and from $X_{\Sigmao}$ to $X_{\Sigma_\rho}$ are identified with convolution with $\Theta_\rho$ by the CCC. Thus what we must check explicitly is the commutativity of the following diagram, corresponding to the isomorphism $\pi_*i_{\cX_{\stackfan_\rho}}^* \cong i_{X_{\Sigma_\rho}}^* \pi_*$ of functors $\IndCoh(\cX_{\stackfan}) \to \IndCoh(X_{\Sigma_\rho})$.
\[
\begin{tikzpicture}[thick,>=\arrtip]
\newcommand*{\width}{4.5}; \newcommand*{\hta}{2}; \newcommand*{\htb}{2};
	\node (a) at (0,0) {$\Sh_{\Lambda_{\stackfano}}(T^2)$};
	\node (b) at (\width,0) {$\Sh_{\Lambda_{\Sigmao}}(T^2)$};
	\node (c) at (0,-\hta) {$\Sh_{\Lambda_{\stackfan_\rho}}(T^2)$};
	\node (d) at (\width,-\hta) {$\Sh_{\Lambda_{\Sigma_\rho}}(T^2)$};
	\draw[->] (a) to node[above] {$K_{\{\Lambda_t\}}$} (b);
	\draw[->] (c) to node[above] {$K_{\{\Lambda^\rho_t\}}$} (d);
	\draw[->] (a) to node[left] {$\Theta_\rho \star -$} (c); 
	\draw[->] (b) to node[right] {$\Theta_\rho \star -$} (d);

\end{tikzpicture}
\]

Consider the Legendrian $\Lambda_{[0,1)}\subset T^\infty(T^2 \times [0,1))$ associated to the isotopy $\{\Lambda_t\}_{t \in [0,1)}$. Given subsets $\Lambda \subset T^\infty X$, $\Lambda' \subset T^\infty Y$ we will write $\Lambda \Legtimes \Lambda' \subset T^\infty (X \times Y)$ for the subset whose cone in $T^* (X \times Y)$ is the product of the cones of $\Lambda$ and $\Lambda'$. We then let $\Lambda_I$ be the union in $T^\infty (T^2 \times I)$ of $\Lambda_{[0,1)}$ with $\Lambda_1 \Legtimes T^\infty_{\{1\}} I$.
We now consider the pointwise multiplication map $m: T^2 \times T^2 \times I \to T^2 \times I$ and claim the following about the interaction of $\Lambda_1 = \Lambda_{\Sigmao}$ with $\Lambda_I$: if $\cF \in \Sh_{\Lambda_1}(T^2)$ and $\cG \in \Sh_{\Lambda_I}(T^2 \times I)$, then $m_!(\cF \boxtimes \cG)$ is also in $\Sh_{\Lambda_I}(T^2 \times I)$. 

To see this, we once again apply \cite[Prop. 5.4.4]{KS94}, obtaining
\begin{equation*}
	\begin{aligned}
		SS(m_!(\cF \boxtimes \cG)) \subset &\{(p(m), n, t, \tau) \in T^*T^2 \times T^*I \text{ such that } \exists(p(m_1), n) \in SS(\cF),\\ &\quad  (p(m_2), n, t, \tau) \in SS(\cG) \text{ with }p(m_1+m_2)=p(m)\}.
	\end{aligned}
\end{equation*}
If $n$ is nonzero for such a point then $n$ must lie on the negative of some ray $\rho$ since $(p(m_1), n) \in SS(\cF)$ and $\cF \in \Sh_{\Lambda_{\Sigmao}}(T^2)$. On the other hand, we must then have $p(m_1) \in p(\rho^\perp_0)$ while $p(m_2)$ lies on
$$\bigcup_{n = 0}^{|C_\rho|-1} p(\rho^\perp + n t\chi_\rho) \subset T^2.$$
Since this second set is closed under addition by elements of $p(\rho^\perp_0)$ the claim follows.

It follows in particular that we have a well-defined arrow on the right side of the following square, which then commutes by base change.
\[
\begin{tikzpicture}
[thick,>=\arrtip]
\node (a) at (0,0) {$\Sh_{\Lambda_1 \Legtimes \Lambda_0}(T^2 \times T^2)$};
\node (b) at (5,0) {$\Sh_{\Lambda_1 \Legtimes \Lambda_{[0,1)}}(T^2 \times T^2 \times [0,1))$};
\node (c) at (0,-2) {$\Sh_{\Lambda_0}(T^2)$};
\node (d) at (5,-2) {$\Sh_{\Lambda_{[0,1)}}(T^2 \times [0,1))$};
\draw[<-] (a) to node[above] {$i_0^*$} (b);
\draw[->] (b) to node[right] {$m_!$} (d);
\draw[->] (a) to node[left] {$m_!$} (c);
\draw[<-] (c) to node[above] {$i_0^*$} (d);
\end{tikzpicture}
\]
Here and below we abbreviate e.g. $(id_{T^2 \times T^2} \times i_0)^*$ to $i_0^*$ when no ambiguity should result. 
We note again that since $\{\Lambda_t\}$ is an isotopy for $t \in [0,1)$ the horizontal arrows are in fact equivalences. 

We now consider the following diagram, its middle vertical arrow also being well-defined by the preceding discussion. 
\[
\begin{tikzpicture}[thick,>=\arrtip]
\newcommand*{\xb}{-.5}; 
\newcommand*{\xc}{5.5}; \newcommand*{\xd}{11};
\newcommand*{\hta}{2};
\node (b) at (\xb,0) {$\Sh_{\Lambda_1 \Legtimes \Lambda_{[0,1)}}(T^2 \times T^2 \times [0,1))$};
\node (c) at (\xc,0) {$\Sh_{\Lambda_1 \Legtimes \Lambda_{I}}(T^2 \times T^2 \times I)$};
\node (d) at (\xd,0) {$\Sh_{\Lambda_1 \Legtimes \Lambda_1}(T^2 \times T^2)$};
\node (f) at (\xb,-\hta) {$\Sh_{\Lambda_{[0,1)}}(T^2 \times [0,1))$};
\node (g) at (\xc,-\hta) {$\Sh_{\Lambda_{I}}(T^2 \times I)$};
\node (h) at (\xd,-\hta) {$\Sh_{\Lambda_1}(T^2)$};
\draw[->] (b) to node[above] {$(i_{[0,1)})_*$} (c); \draw[->] (c) to node[above] {$i_1^*$} (d);
\draw[->] (f) to node[above] {$(i_{[0,1)})_*$} (g); \draw[->] (g) to node[above] {$i_1^*$} (h);
\draw[->] (b) to node[left] {$m_!$} (f);
\draw[->] (c) to node[left] {$m_!$} (g); \draw[->] (d) to node[right] {$m_!$} (h);
\end{tikzpicture}
\]
The left square commutes since the two ways around the square are just different ways of factoring the product map $(m \times i)_*$ ($T^2$ being compact). The right square commutes by base change. Thus combining the total square with the one obtained above we obtain another commuting square
\[
\begin{tikzpicture}
[thick,>=\arrtip]
\node (a) at (0,0) {$\Sh_{\Lambda_1 \Legtimes \Lambda_0}(T^2 \times T^2)$};
\node (b) at (8,0) {$\Sh_{\Lambda_1 \Legtimes \Lambda_1}(T^2 \times T^2)$};
\node (c) at (0,-2) {$\Sh_{\Lambda_0}(T^2)$};
\node (d) at (8,-2) {$\Sh_{\Lambda_1}(T^2).$};
\draw[->] (a) to node[above] {$i_1^* \circ (i_{[0,1)})_* \circ (i_0^*)^{-1}$} (b);
\draw[->] (b) to node[right] {$m_!$} (d);
\draw[->] (a) to node[left] {$m_!$} (c);
\draw[->] (c) to node[above] {$i_1^* \circ (i_{[0,1)})_* \circ (i_0^*)^{-1}$} (d);
\end{tikzpicture}
\]
We now compare the two ways around the square after precomposing with the functor $$ \Theta_\rho \boxtimes -: \Sh_{\Lambda_0}(T^2) \to \Sh_{\Lambda_1 \Legtimes \Lambda_0}(T^2 \times T^2).$$
The top right path results in the functor $\Theta_\rho \star K_{\{\Lambda_t\}}(-)$ while the bottom left path results in $K_{\{\Lambda_t\}}(\Theta_\rho \star -)$. Thus we have identified the two functors we needed to identify, completing the proof. 
\end{proof}

\bibliographystyle{amsalpha}
\bibliography{bibliography}

\end{document}